\theoremstyle{definition} 
 \newtheorem{definition}{Definition}[section]
 \newtheorem{remark}[definition]{Remark}
\theoremstyle{plain}      
 \newtheorem{proposition}[definition]{Proposition}
 \newtheorem{theorem}[definition]{Theorem}
 \newtheorem{corollary}[definition]{Corollary}
\begin{document}

\title{Uniform distribution of 
Galois conjugates and  
beta-conjugates of a Parry number 
near the unit circle
and dichotomy of Perron numbers}

\author{
Jean-Louis Verger-Gaugry~
}

\address{
Institut Fourier, CNRS UMR 5582, \\
Universit\'e de Grenoble I, \\
BP 74, 38402 Saint-Martin d'H\`eres, France\\
email:\,\tt{jlverger@ujf-grenoble.fr}
}

\maketitle

\begin{abstract} 
Concentration and equi-distribution, 
near the unit circle, in Solomyak's set, of
the union of the Galois conjugates and the
beta-conjugates of 
a Parry number $\beta$ are characterized by 
means of the Erd\H{o}s-Tur\'an approach,  
and its improvements by Mignotte and Amoroso,
applied to the analytical function 
$f_{\beta}(z) = -1 + \sum_{i \geq 1} t_i z^i$
associated with 
the R\'enyi $\beta$-expansion $d_{\beta}(1)=
0.t_1 t_2 \ldots$ of unity. 
Mignotte's discrepancy function requires
the knowledge of the factorization of the Parry polynomial of $\beta$. This one
is investigated using theorems of Cassels, Dobrowolski,
Pinner and Vaaler, Smyth, Schinzel
in terms of cyclotomic, reciprocal non-cyclotomic and non-reciprocal factors.
An upper bound of Mignotte's discrepancy function
which arises from the beta-conjugates of $\beta$ which are
roots of cyclotomic factors is linked to the Riemann hypothesis, following Amoroso.
An equidistribution limit theorem, following Bilu's theorem, 
is formulated for the concentration phenomenon of conjugates of Parry numbers 
near the unit circle.
Parry numbers are Perron numbers. 
Open problems on non-Parry Perron numbers
are mentioned in the context of the existence of non-unique 
factorizations of 
elements of number fields into irreducible Perron numbers (Lind).
\end{abstract}

\begin{classification}
11M99, 30B10, 12Y05.
\end{classification}

\begin{keywords}
Parry number, Perron number, Pisot number, Salem number, numeration, beta-integer, beta-shift, 
zeroes, beta-conjugate, discrepancy, Erd\H{o}s-Tur\'an, Riemann hypothesis, Parry polynomial,
factorization, equidistribution.
\end{keywords}

\newpage
\tableofcontents  

\section{Introduction}
\label{S1}

A Perron number is either 1 or
a real number $\beta > 1$
which is an algebraic integer
such that all its Galois conjugates
$\beta^{(i)}$ satisfy:
$|\beta^{(i)}| < \beta$ for all $i=1, 2, \ldots, d-1$, if
$\beta$ is of degree
$d \geq 1$ (with $\beta^{(0)} = \beta$).
Let 
$\mathbb{P}$ be the set of Perron numbers.
This set $\mathbb{P}$ is partitioned into
two disjoint components whose frontier is badly known
\cite{vergergaugry2}.
This partitioning 
arises from the properties of the
numeration in base $\beta$, i.e.
of the $\beta$-shifts and the dynamical systems  
$([0,1], T_{\beta})$ 
\cite{blanchard} \cite{frougny1}, 
where $\beta$ runs over
$(1, +\infty)$,
and where $T_{\beta}(x) = \{\beta x\}$
is the beta-transformation
($\lfloor x\rfloor$, $\{x\}$ and
$\lceil x \rceil$ denote the integer
part, resp. the fractional part, 
resp. the smallest integer greater than or equal to a real number $x$).  
Let us recall this dichotomy and fix some notations.

Let $\beta > 1$  
be a real number and assume throughout the paper
that 
$\beta$ is non-integer.
The R\'enyi $\beta$-expansion of $1$ is by definition
denoted by
\begin{equation}
\label{renyidef}
d_{\beta}(1) = 0.t_1 t_2 t_3 \ldots
\qquad
{\rm and ~corresponds~ to}
\qquad 1 = \sum_{i=1}^{+\infty} t_i \beta^{-i}\, ,
\end{equation}
where $t_1 = \lfloor \beta \rfloor,
t_2 = \lfloor \beta \{\beta\}\rfloor = \lfloor \beta T_{\beta}(1)\rfloor,
t_3 = \lfloor \beta \{\beta \{\beta\}\}\rfloor = \lfloor \beta T_{\beta}^{2}(1)\rfloor,
\ldots$ 
The digits
$t_i$ belong to the finite alphabet
$\mathcal{A}_{\beta} := \{0, 1, 2, \ldots, \lceil \beta - 1 \rceil \}$.  
$\beta$ is said to be a Parry number 
if $d_{\beta}(1)$ is finite or ultimately periodic (i.e. eventually
periodic); in particular, a Parry number $\beta$ is 
said to be simple if $d_{\beta}(1)$ is finite.

A proof that Parry numbers are 
Perron numbers 
is given by Theorem 7.2.13 and Proposition 7.2.21 
in Lothaire \cite{lothaire}. 
On the contrary a good proportion of Perron numbers are not Parry numbers, so that
the dichotomy
of $\mathbb{P}$ can be stated as
\begin{equation}
\label{dichotomy}
\mathbb{P} ~=~ \mathbb{P}_{P} ~\cup~ \mathbb{P}_{a},  
\end{equation}
where $\mathbb{P}_{P}$ denotes the set of Parry numbers and $\mathbb{P}_{a}$ the set of
Perron numbers which are not Parry numbers ($a$ stands for aperiodic). 
The set $\mathbb{P}_{P}$ is dense in $(1,+\infty)$ \cite{parry},  
contains all Pisot numbers \cite{bertrandmathis} \cite{boyd2} \cite{schmidt}
and Salem numbers of degree 4 \cite{boyd1}, at least \cite{boyd3} \cite{vergergaugry2}.

Following \cite{vergergaugry2} the present note 
continues the exploration
of this dichotomy by the Erd\H{o}s-Tur\'an approach applied to the collection
$\mathcal{B} := (f_{\beta}(z))_{\beta \in \mathbb{P}}$ of analytic functions 
\begin{equation}
\label{powerseriesdef}
f_{\beta}(z) := \sum_{i=0}^{+\infty} t_i z^i \,
\qquad \mbox{for}~ \beta \in \mathbb{P}, z \in \mathbb{C},
\end{equation}
with
$t_0 = -1$, canonically associated with the R\'enyi
expansions $d_{\beta}(1) = 0.t_1 t_2 t_3 \ldots$, 
for which $f_{\beta}(z)$ is a rational fraction 
if and only if $\beta \in \mathbb{P}_{P}$ (Section \ref{S2}); 
when $\beta \in \mathbb{P}_{P}$ the opposite of the 
reciprocal polynomial of the numerator of 
$f_{\beta}(z)$, namely $n_{\beta}^{*}(X)$,
is the Parry polynomial of $\beta$, a multiple of the
minimal polynomial of $\beta$.

Section \ref{S3} explores 
the factorization of $n_{\beta}^{*}(X)$, 
the geometry and the equi-distribution 
of its roots near the unit circle 
when
$\beta$ is a Parry number. 
These roots are
either Galois conjugates or 
beta-conjugates of $\beta$ and 
comparison is made between these two collections of roots.
The important points in this exploration are:
(i) the discrepancy function as obtained 
by Mignotte \cite{ganelius} 
\cite{mignotte1} \cite{mignotte2}
in a theorem which generalizes
Erd\H{o}s-Tur\'an's Theorem, its splitting and its properties
following Amoroso \cite{amoroso2} \cite{amorosomignotte},
how the beta-conjugates of $\beta$ which are roots of unity,
as roots of the irreducible cyclotomic factors   
of $n_{\beta}^{*}(X)$, are linked to 
the Riemann hypothesis (R.H.),
(ii) the number of 
positive real (Galois- or beta-)
conjugates of $\beta$ when the degree of $\beta$ is large,
by comparison with  Kac's formula
\cite{kac} \cite{edelmankostlan}, 
(iii) upper bounds for the multiplicities of beta-conjugates,  
(iv) an equidistribution limit theorem in the same 
formulation as Bilu's Theorem  
\cite{bilu} for convergent 
sequences of Parry numbers (or Parry polynomials)
with Haar measure on the unit circle as limit measure. 

Mignotte's discrepancy function allows a much better strategy 
than Erd\H{o}s-Tur\'an's discrepancy function in the Erd\H{o}s-Tur\'an
approach of the Parry polynomial $n_{\beta}^{*}(X)$
since it is a subaddtive function
on its factorizaton
and enables to investigate 
the roles played by 
its factors, namely cyclotomic, reciprocal non-cyclotomic and
non-reciprocal, term by term.

The factorization of the Parry polynomial $n_{\beta}^{*}(X)$
is itself 
a formidable challenge because of the difficulty of
determining the 
types of its factors, their multiplicities  
and the way   
it is correlated to the Rauzy fractal (central tile)
(Barat, Berth\'e, Liardet and Thuswaldner
\cite{baratbertheliardetthuswaldner}).
The problem of Lehmer of finding the smallest Mahler measure is essentially
equivalent to the problem of estimating the number of
irreducible non-cyclotomic factors of $n_{\beta}^{*}(X)$
(Pinner and Vaaler \cite{pinnervaaler3}).  

Section \ref{S4} provides various 
examples on which Erd\H{o}s-Tur\'an's and
Mignotte's discrepancy functions are
computed.  
In Section \ref{S5} 
the case of Perron numbers which 
are not Parry numbers is evoked in the general context
of the arithmetics of Perron numbers where  
non-unique factorizations on irreducible Perron numbers may
occur.  

\vspace{0.3cm}

Notations: N$(\beta) = \prod_{i=0}^{d-1} \, \beta^{(i)}$ 
is the algebraic norm of
the algebraic number $\beta (= \beta^{(0)})$, of degree $d \geq 1$;
$P_{\beta}(X)$ is the minimal 
polynomial of the algebraic number $\beta > 1$, 
with positive leading coefficient;   
$R^{*}(X)=X^m R(1/X)$ is the reciprocal polynomial of the
polynomial $R(X)$ (of degree $m$) and
$R(X)$ is said reciprocal if $R(X) = R^{*}(X)$; 
$||Q||_2 = \left(\sum_{j=0}^{m} |\alpha_j|^2\right)^{1/2}$, resp.
$||Q||_1 = {\rm L}(Q) = \sum_{j=0}^{m} |\alpha_j|$, 
${\rm H}(Q) = \max_{0 \leq j \leq m} |\alpha_j|$, 
the 2-norm, resp. the 1-norm (or length), 
resp. the height, of the polynomial
$Q(X) = \sum_{j=0}^m \alpha_j X^j$, $\alpha_j \in \mathbb{C}$; 
M$(R) = |a_R| \prod_{j=0}^{m} \max\{1, |b_{j}|\}$
denotes the Mahler measure of the polynomial 
$R(X) = a_R \prod_{j=0}^m (X - b_{j}) \in \mathbb{C}[X]$
where $a_R$ is the leading coefficient.
$D(z_0, r)$ denotes the open disk centred at 
$z_0 \in \mathbb{C}$ of radius $r > 0$,
$\overline{D(z_0, r)}$ its closure.
Log$^{+} \, x$ ($x > 0$) denotes 
max$\{{\rm Log} \, x, 0\}$.
The constants implied by the Vinogradov symbol `$\ll$'
are absolute and computable; when written
`$\ll_{\epsilon}$' for $\epsilon > 0$, they depend upon $\epsilon$.

\section{Szeg\H{o}'s Theorem and numeration}
\label{S2}

Every analytical function $f_{\beta}(z)$ 
with $\beta > 1$ ($\beta \not\in \mathbb{N}$) obeys
the dichotomy given by the following theorem
(\cite{szego1}, \cite{dienes} p 324--7) 
since its coefficients belong to
$\mathcal{A}_{\beta}$, which is finite.

\begin{theorem}[Szeg\H{o}]  
\label{szegothm}
A Taylor series $\sum_{n \geq 0} a_n z^n$ with
coefficients in a finite subset $S$ of $\mathbb{C}$ 
is either equal to
\begin {itemize}
\item[(i)] a rational fraction $\displaystyle
U(z) + z^{m+1} \frac{V(z)}{1-z^{p+1}}$
where $U(z) = -1 + \sum_{i=1}^{m} b_i z^i$ ,
$V(z) = \sum_{i=0}^{p} e_i z^i$ are
polynomials with
coefficients in $S$
and $m \geq 1, p \geq 0$
integers, or
\item[(ii)] it is an analytic function
defined on the open unit disk 
which is not continued beyond
the unit circle (which is its natural boundary).
\end{itemize}
\end{theorem}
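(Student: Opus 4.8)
The plan is to prove the dichotomy through three ingredients: a direct identification of the rational form (i) with eventual periodicity of the coefficient sequence, Kronecker's Hankel-determinant criterion for rationality, and the confrontation of an analytic upper bound with an arithmetic lower bound on those determinants. Throughout I write $f(z) = \sum_{n\ge 0} a_n z^n$ with $a_n \in S$, $S$ finite, and set $M = \max_{s\in S}|s|$; since $|a_n|\le M$, the radius of convergence is $\ge 1$, so $f$ is holomorphic on the open unit disk $D(0,1)$.

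First I would settle what form (i) means coefficient-wise. Expanding the geometric series, $z^{m+1}V(z)/(1-z^{p+1}) = z^{m+1}V(z)\sum_{k\ge 0} z^{k(p+1)}$, one sees at once that the coefficients of a function of shape (i) agree, for every $n>m$, with a fixed pattern of period $p+1$; conversely, if $(a_n)$ is eventually periodic, splitting off the pre-periodic block into $U(z)$ and encoding one period into $V(z)$ exhibits $f$ in the form (i), the normalization $a_0=-1$ being absorbed into the constant term of $U$. Thus (i) holds \emph{if and only if} $(a_n)_{n\ge 0}$ is eventually periodic, and the whole theorem reduces to: either $(a_n)$ is eventually periodic, or the unit circle is the natural boundary of $f$.

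The engine for the dichotomy is the Hankel determinant $D_n := \det\big(a_{i+j}\big)_{0\le i,j\le n}$. By Kronecker's theorem, $f$ is rational precisely when $D_n=0$ for all large $n$, so it suffices to show that if the unit circle is \emph{not} a natural boundary then $f$ is rational. Assume then that $f$ extends holomorphically across some boundary arc. The analytic input, which is the heart of the theorem, is a Hadamard--Szeg\H{o} estimate: continuation across an arc forces super-exponential decay of the nonzero determinants, of the type $\limsup_n |D_n|^{1/n^2} < 1$. The arithmetic input is a matching lower bound. When $S\subset\Z$ — the case in the application, where $a_n=t_n\in\mathcal{A}_{\beta}$ — each $D_n$ is a rational integer, so $D_n\ne 0$ forces $|D_n|\ge 1$, which contradicts the decay unless only finitely many $D_n$ are nonzero. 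For general $S\subset\overline{\Q}$ one replaces ``$|D_n|\ge 1$'' by the product formula over all places of $\Q(S)$: Hadamard's inequality bounds $|D_n|$ at every place by quantities of size $(CM\sqrt{n})^{\,n+1}$, i.e. $e^{O(n\log n)}$, while the analytic estimate makes one archimedean value decay like $e^{-c\,n^2}$, so the product of all absolute values of the nonzero algebraic number $D_n$ would be strictly less than $1$, contradicting $\prod_v |D_n|_v = 1$. Either way $D_n=0$ for large $n$, and $f$ is rational by Kronecker.

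It then remains to upgrade rationality to form (i). From rationality, $a_n = \sum_j P_j(n)\,\rho_j^{\,n}$ for $n$ large, where the $\rho_j$ are the reciprocals of the poles and the $P_j$ are polynomials. Boundedness forces $|\rho_j|\le 1$, and $P_j$ constant whenever $|\rho_j|=1$. The finite-set hypothesis then does the rest: a contribution with $|\rho_j|<1$ is a correction that tends to $0$ without being eventually zero, hence cannot perturb a finite-valued periodic pattern (distinct geometric sequences being linearly independent), so such terms vanish identically; and among the surviving $\rho_j$ with $|\rho_j|=1$, any one that is not a root of unity would make $\sum_j c_j\rho_j^{\,n}$ equidistribute and take infinitely many values (Weyl/Kronecker), which is impossible. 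Hence every $\rho_j$ is a root of unity, $(a_n)$ is eventually periodic, and $f$ has the form (i) by the first step. The main obstacle is the analytic Hadamard--Szeg\H{o} estimate linking continuation across an arc to super-exponential decay of the Hankel determinants; once that is available, the arithmetic lower bound is the decisive ingredient upgrading Carlson's integer-coefficient theorem to Szeg\H{o}'s finite-set statement, and the passage from ``rational'' to ``eventually periodic'' is routine.
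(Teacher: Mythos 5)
The paper offers no proof of this statement at all: it is quoted as Szeg\H{o}'s theorem with the proof attributed to \cite{szego1} and to Dienes \cite{dienes}, pp.~324--327, so your attempt has to be measured against that classical argument. Your route --- Kronecker's Hankel-determinant criterion, a P\'olya-type estimate $\limsup_n |D_n|^{1/n^2}<1$ forced by continuation across a boundary arc, and an arithmetic lower bound on the nonzero $D_n$ --- is the proof of the Carlson--P\'olya theorem, and it has two genuine gaps here. First, and decisively, the arithmetic input evaporates at the stated level of generality: the theorem allows $S$ to be \emph{any} finite subset of $\mathbb{C}$, and if $S$ contains transcendental numbers (say $S=\{0,1,\pi\}$, with no way to factor out the transcendence) the determinants $D_n$ are neither rational integers nor algebraic, so neither $|D_n|\ge 1$ nor the product formula is available, and a nonzero $D_n$ may be arbitrarily small. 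Your method therefore only reaches $S\subset\overline{\mathbb{Q}}$; that does cover the paper's application, where $S=\{-1\}\cup\mathcal{A}_{\beta}\subset\mathbb{Z}$, but it does not prove the theorem as written. Second, the estimate $\limsup_n|D_n|^{1/n^2}<1$, which you yourself flag as the main obstacle, is only asserted: it is P\'olya's transfinite-diameter inequality combined with the fact that the closed disk with a boundary bite removed has capacity strictly less than $1$, a body of theory at least as heavy as the theorem being proved.

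The classical proof that the paper points to is both more elementary and fully general, and uses finiteness of $S$ in a completely different way. One considers the shifted tails $R_k(z)=\sum_{n\ge 0}a_{n+k}z^n=z^{-k}\bigl(f(z)-\sum_{n<k}a_nz^n\bigr)$, each of which continues analytically wherever $f$ does; since there are at most $(\#S)^N$ initial blocks of length $N$, the pigeonhole principle produces pairs $k<l$ with $R_k-R_l=O(z^N)$, hence small on compacta of the disk, and a normal-families/harmonic-measure argument on the enlarged domain, together with the \emph{positive minimal distance} between distinct elements of $S$, forces $R_k=R_l$ for some $k\neq l$. That is exactly the eventual periodicity $a_{n+k}=a_{n+l}$, which yields form (i) directly --- this is also why the paper can assert that $U+1$ and $V$ inherit admissibility from the coefficient sequence. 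By contrast your argument detours through rationality and must then recover periodicity; that final step (bounded rational coefficients taking finitely many values are eventually periodic, via Weyl--Kronecker equidistribution) is sound, as is your opening reduction of (i) to eventual periodicity. But as it stands the core of your proof establishes a different and strictly weaker theorem, and its hardest ingredient is left unproved.
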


Let us recall the Conditions of Parry \cite{blanchard} \cite{frougny1}
\cite{lothaire} \cite{parry}.
Let
$\beta > 1$ and $(c_i)_{i \geq 1}$ be the sequence of digits in 
$\mathcal{A}_{\beta}$ defined by
$$
c_1 c_2 c_3 \ldots := \left\{
\begin{array}{ll}
t_1 t_2 t_3 \ldots & \mbox{if ~$d_{\beta}(1) = 0.t_1 t_2 \ldots$~ is infinite,}\\
(t_1 t_2 \ldots t_{q-1} (t_q - 1))^{\omega}
& \mbox{if ~$d_{\beta}(1)$~ is finite,  
~$= 0. t_1 t_2 \ldots t_q$,} 
\end{array}
\right.
$$
where $( \, )^{\omega}$ means that the word within $(\, )$ is indefinitely repeated.
We say that $d_{\beta}(1)$ is finite if it ends in infinitely many zeros.
A sequence $(y_i)_{i \geq 0}$ of elements of 
$\mathcal{A}_{\beta}$ (finite or not) is said 
admissible if and only if 
$$(y_j, y_{j+1}, y_{j+2}, \ldots) <_{lex} 
~(c_1 c_2, c_3, \ldots) \qquad \mbox{for all}~ j \geq 0,$$ 
where $<_{lex}$ means {\it lexicographically smaller}.
A polynomial $\sum_{j=0}^{n} y_j X^j$, resp. a formal series
$\sum_{j=0}^{n} y_j X^j$, is said to be admissible
if and only if its   
coeffciients vector $(y_i)$ is admissible.
Since $f_{\beta}(X)+1$ is admissible by construction, the proof 
of Theorem \ref{szegothm} (i) by Dienes (\cite{dienes} pp 324--327)
readily shows
that both polynomials 
$U(X)+1$ and
$V(X)$ are admissible (we except the coefficient 
$t_0 = -1$ which does not belong to the alphabet $\mathcal{A}_{\beta}$)
when $\beta$ is a Parry number. 

In both cases, every analytical function $f_{\beta}(z)$
admits $1/\beta$ as simple root since $f_{\beta}'(1/\beta) =
\sum_{i \geq 1} i t_i \beta^{-i+1} > 0$. 

The integers $m \geq 1$ and $p+1 \geq 1$ in Theorem \ref{szegothm}
are respectively 
the preperiod length and the period length in
$d_{\beta}(1)$ when 
$\beta$ is a non-simple Parry number, and 
we take the natural convention $p+1=0$ 
when $\beta$ is a simple Parry number (which corresponds to
$V(X) \equiv 0$). The case $m=0$ corresponds to a purely periodic R\'enyi
expansion of unity in base $\beta$. 
Recall the $\beta$-transformation $T_{\beta}$:
$T_{\beta}^{1} = T_{\beta}: [0,1] \to [0,1],
~x \to \{\beta x\}$,
$T_{\beta}^{j+1}(x) = T_{\beta}(T_{\beta}^{j}(x)),
~j \geq 0$, and
$T_{\beta}^{0} = Id$. The sequence $(t_i)_{i \geq 1}$
is related to $(T_{\beta}^{j}(1))_{j \geq 0}$ \cite{frougny1} \cite{lothaire} by:
$$T_{\beta}^{0}(1)=1, ~T_{\beta}^{j}(1) = \beta^j - t_1 \beta^{j-1} - t_2 \beta^{j-2} - \ldots - t_j 
~\mbox{for}~ j \geq 1.$$ 
 
\section{Parry polynomials, Galois- and  beta-conjugates in
Solomyak's set $\Omega$ for a Parry number $\beta$}
\label{S3}

\subsection{Erd\H{o}s-Tur\'an's approach and Mignotte's discrepancy splitting} 
\label{S3.1}

Assume that $\beta > 1$ is a Parry number 
(of degree $d \geq 2$). Using
the notations of Theorem \ref{szegothm}, the rational fraction
$f_{\beta}(z)$ has 
$$U(z), \qquad \mbox{resp.} \quad U(z) (1-z^{p+1}) 
+ z^{m+1} V(z) ,$$
as numerator, when $\beta$ is a simple, resp. a non-simple,
Parry number 
whose coefficients are in $\mathbb{Z}$ and are of moduli 
in $\mathcal{A}_{\beta}$.
Since $f_{\beta}(1/\beta) = 0$, this numerator, say $- n_{\beta}(z)$,  
can be factored 
as
\begin{equation}
\label{numerat}
- n_{\beta}(z) = P_{\beta}^{*}(z) \times 
\prod_{j=1}^{s} \left(\Phi_{n_j}(z)\right)^{c_j} \times  
\prod_{j=1}^{q} \left(\kappa_{j}^{*}(z)\right)^{\gamma_j} \times
\prod_{j=1}^{u} \left(g_{j}(z)\right)^{\delta_j}
\end{equation}
where 
the polynomials $\Phi_{n_j}(X) \in \mathbb{Z}[X]$ are irreducible and cyclotomic, 
with $n_1 < n_2 < \ldots < n_s$, $\kappa_{j}(X) \in \mathbb{Z}[X]$
are irreducible and non-reciprocal,
$g_{j}(X) \in \mathbb{Z}[X]$ are irreducible, reciprocal and non-cyclotomic,
and
$s, q, u \geq 0$, $c_j, \gamma_j, \delta_j \geq 1$, are integers.
The Parry polynomial of $\beta$ \cite{boyd2} 
\cite{parry}, with 
leading coefficient $1$, is by definition
\begin{equation}
\label{numeratParryPol}
n_{\beta}^{*}(X):= P_{\beta}(X) \, 
\left( - \prod_{j=1}^{s} \left(\Phi_{n_j}(z)\right)^{c_j} \, 
\prod_{j=1}^{q} \left(\kappa_{j}(z)\right)^{\gamma_j} \,
\prod_{j=1}^{u} \left(g_{j}(z)\right)^{\delta_j}
\right)
\end{equation}
Its degree, denoted by 
$d_{P}$,
is $m+p+1$ with the notations of Theorem
\ref{szegothm}. 
In the case of non-simple Parry numbers, we have: 
$n_{\beta}^{*}(\beta) = 0 = T_{\beta}^{m+p+1}(1) - T_{\beta}^{m}(1)$ if $m \geq 1$
and $n_{\beta}^{*}(\beta) = 0 = T_{\beta}^{p+1}(1) - T_{\beta}^{0}(1) 
= T_{\beta}^{p+1}(1) - 1$ if $m=0$ (pure periodicity) where $p+1$ is the period length. 
Hence, for non-simple Parry numbers:
$$n_{\beta}^{*}(X) = 
X^{m+p+1} -t_1 X^{m+p} - t_2 X^{m+p-1} - \ldots - t_{m+p} X - t_{m+p+1}
\hspace{2cm} \mbox{}$$
\begin{equation}
\label{nonsimplepoly1}
\mbox{} \hspace{3cm}- X^{m} +t_1 X^{m-1} + t_2 X^{m-2} + \ldots + t_{m-1} X +t_m
\end{equation}
in the first case and 
\begin{equation}
\label{nonsimplepoly2}
n_{\beta}^{*}(X) = X^{p+1} -t_1 X^{p} - t_2 X^{p-1} - \ldots - t_{p} X - (1+t_{p+1})
\end{equation}
in the case of pure periodicity. For simple Parry numbers, the Parry polynomial
is
\begin{equation}
\label{simplepoly3}
X^{m} -t_1 X^{m-1} - t_2 X^{m-2} - \ldots - t_{m-1} X - t_{m}
\end{equation}
with $m \geq 1$.
Since $t_1 = \lfloor \beta \rfloor$, the Parry polynomial is a polynomial 
of small height which has the basic property: 
\begin{equation}
\label{hauteurineqs}
\lfloor \beta \rfloor \leq {\rm H}(n_{\beta}^{*}) 
\leq \lceil \beta \rceil
\end{equation}
with all coefficients having  a modulus 
$\leq \lfloor \beta \rfloor$ except possibly only one 
for which the modulus would be equal to $\lceil \beta \rceil$:
the coefficient of the monomial of degree $m$ in \eqref{nonsimplepoly1} and the
constant term 
in \eqref{nonsimplepoly2}.
If $\beta$ is a simple Parry number, then the equality
\begin{equation}
\label{hauteursimple}
{\rm H}(n_{\beta}^{*}) ~=~ \lfloor \beta \rfloor
\end{equation}
holds by \eqref{simplepoly3}. 
The polynomial 
\vspace{-0.3cm}
$$ \frac{n_{\beta}^{*}(X)}{P_{\beta}(X)}$$
is called the complementary factor \cite{boyd2}. 
It is a monic polynomial and some algebraic properties of
$n_{\beta}^{*}(X)$
are known (Handelman
\cite{handelman}).
A beta-conjugate of
$\beta$ is by definition the inverse $\xi$ of a zero of 
$f_{\beta}(z)$, or in other words a zero of the Parry polynomial, 
which is not a Galois conjugate of $\beta$
(i.e. $\xi \neq \beta^{(i)}$ for $i=1, 2, \ldots, d-1$).
Saying that the Parry polynomial of $\beta$ is irreducible is equivalent to saying that
$\beta$ has no beta-conjugate.

Let
$\mathcal{B} := \{f(z) = 1 + \sum_{j=1}^{\infty} a_j z^j \, \mid 0 \leq a_j \leq 1\}$ 
be the convex set of 
functions analytic in the open unit disk $D(0,1)$. Let 
$$\mathcal{G} := \{ \xi \in D(0,1) \mid f(\xi) = 0 ~\mbox{for some}~ f \in \mathcal{B}\}$$ 
and $\mathcal{G}^{-1} := \{ \xi^{-1} \mid \xi \in \mathcal{G} \}$.
The external boundary $\partial \mathcal{G}^{-1}$ of $\mathcal{G}^{-1}$ 
is a curve which has a cusp at $z=1$, a spike on the negative real axis, 
which is the segment $\bigl[-\frac{1+\sqrt{5}}{2}, -1\bigr]$,
and is fractal at an infinite number of points 
(Figure \ref{fractalSOLO};
see Fig. 2 and \S 4 in \cite{solomyak}). It defines
two components in the complex plane, and inverses of zeros of 
all $f \in \mathcal{B}$ are all necessarily within the bounded component
$\Omega := \mathcal{G}^{-1} \cup \overline{D(0,1)}$.

\begin{theorem}[Solomyak]
\label{solomyakthm}
The Galois conjugates $(\neq \beta)$ 
and the beta-conjugates of all Parry numbers  
$\beta$ belong to $\Omega$, occupy it densely, and
$$\mathbb{P}_{P} \cap \Omega = \emptyset.$$
\end{theorem}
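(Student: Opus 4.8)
The plan is to reduce the three assertions to a single explicit factorisation of $f_{\beta}$ that displays a member of $\mathcal{B}$. Using the orbit recorded at the end of Section~\ref{S2}, with $T_{\beta}^{0}(1)=1$ and $T_{\beta}^{i}(1)=\{\beta T_{\beta}^{i-1}(1)\}\in[0,1)$ for $i\geq 1$, set $F_{\beta}(z):=\sum_{i\geq 0}T_{\beta}^{i}(1)\,z^{i}$. Because $\beta$ is a Parry number this orbit is eventually periodic, so $F_{\beta}$ is rational, analytic on $D(0,1)$, and its constant term equals $1$ while all other coefficients lie in $[0,1)$; hence $F_{\beta}\in\mathcal{B}$. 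Writing the defining recursion as $t_{i}=\lfloor\beta T_{\beta}^{i-1}(1)\rfloor=\beta T_{\beta}^{i-1}(1)-T_{\beta}^{i}(1)$ for $i\geq 1$ and reindexing the shift yields $\sum_{i\geq 1}t_{i}z^{i}=\beta z\,F_{\beta}(z)-\bigl(F_{\beta}(z)-1\bigr)$ on $D(0,1)$, so that
\begin{equation}
\label{factor-sketch}
f_{\beta}(z)=(\beta z-1)\,F_{\beta}(z),\qquad F_{\beta}\in\mathcal{B}.
\end{equation}
This identity is the engine of the whole proof: it splits off the single zero $z=1/\beta$ into the linear factor and forces every other zero of $f_{\beta}$ to be a zero of a function of $\mathcal{B}$.

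Granting \eqref{factor-sketch}, the inclusion and the disjointness are short. Every Galois conjugate $(\neq\beta)$ and every beta-conjugate of $\beta$ is, by the definitions recalled above, the inverse $\gamma=\xi^{-1}$ of a zero $\xi\neq 1/\beta$ of $f_{\beta}$. If $|\gamma|\leq 1$ then $\gamma\in\overline{D(0,1)}\subseteq\Omega$; if $|\gamma|>1$ then $\xi\in D(0,1)$ and, since $\xi\neq 1/\beta$, \eqref{factor-sketch} forces $F_{\beta}(\xi)=0$, so $\xi\in\mathcal{G}$ and $\gamma\in\mathcal{G}^{-1}\subseteq\Omega$. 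This gives the first assertion. For $\mathbb{P}_{P}\cap\Omega=\emptyset$, observe that a Parry number has $\beta>1$, so $\beta\notin\overline{D(0,1)}$, and it remains only to rule out $\beta\in\mathcal{G}^{-1}$, i.e.\ $1/\beta\in\mathcal{G}$. But any $f=1+\sum_{j\geq 1}a_{j}z^{j}\in\mathcal{B}$ has nonnegative coefficients, hence $f(x)\geq 1>0$ for every real $x\in(0,1)$; thus $\mathcal{G}\cap(0,1)=\emptyset$, and as $1/\beta\in(0,1)$ we conclude $\beta\notin\Omega$.

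The density statement --- that these conjugates occupy $\Omega$ --- is the hard part, and I would organise it in two layers. It suffices to treat the outer region $\mathcal{G}^{-1}$ and the disk separately; points of $\overline{D(0,1)}$ are approached by conjugates of modulus $\leq 1$, while by \eqref{factor-sketch} populating $\mathcal{G}^{-1}$ is equivalent, after inversion, to showing that $\bigcup_{\beta\in\mathbb{P}_{P}}\{\xi\in D(0,1):F_{\beta}(\xi)=0\}$ is dense in $\mathcal{G}$. The reduction to Parry numbers is the easy layer: since $\mathbb{P}_{P}$ is dense in $(1,+\infty)$ \cite{parry} and the zeros of $F_{\beta}$ depend continuously on $\beta$ on compact subsets of $D(0,1)$ (Hurwitz), it is enough to sweep $\mathcal{G}$ with the zeros of the full family $\{F_{\beta}\}_{\beta>1}$ and then perturb $\beta$ to a nearby Parry number.

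The main obstacle is exactly this sweeping step. The map $\beta\mapsto F_{\beta}$ is only a one--real--parameter curve in the infinite--dimensional convex body $\mathcal{B}$, so no single $F_{\beta}$ can approximate a general $f_{0}\in\mathcal{B}$ realising a prescribed target $\xi_{0}\in\mathcal{G}$; the zeros must instead be filled collectively as $\beta$ varies. Establishing that the one--parameter family of zero sets is nonetheless dense in $\mathcal{G}$ is where the geometric input on $\mathcal{G}$ enters --- the structure of the external boundary $\partial\mathcal{G}^{-1}$, its cusp at $z=1$, its spike on $\bigl[-\frac{1+\sqrt{5}}{2},-1\bigr]$ and its self-similarity (\cite{solomyak}) --- together with a continuity/connectedness argument tracking how the zeros of $F_{\beta}$ move and accumulate on the boundary as $\beta$ runs through $(1,+\infty)$. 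I expect this collective density, rather than any pointwise approximation, to carry the weight of the argument.
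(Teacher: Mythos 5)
Your proposal rests on exactly the same engine as the paper: the factorization $f_{\beta}(z)=(\beta z-1)\bigl(1+\sum_{j\geq 1}T_{\beta}^{j}(1)z^{j}\bigr)$ with the second factor in $\mathcal{B}$, which is the paper's identity \eqref{szegoshift}, and your deductions of the inclusion in $\Omega$ and of $\mathbb{P}_{P}\cap\Omega=\emptyset$ (via $f(x)\geq 1$ on $(0,1)$ for $f\in\mathcal{B}$) are correct and in fact more explicit than the paper's one-line ``we deduce the claim.'' The density assertion is the one point you leave as a sketch, but the paper does not prove it either --- the theorem is attributed to Solomyak and the density is imported from \cite{solomyak} --- so your treatment matches the paper's in substance.
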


\begin{proof}
By the following identity
\begin{equation}
\label{szegoshift}
f_{\beta}(z) ~=~ -1 + \sum_{i=1}^{\infty} t_i z^i ~=~
(-1 + \beta z)
\bigl(
1 + \sum_{j=1}^{\infty} T_{\beta}^{j}(1) z^{j}
\bigr),\qquad |z| < 1,
\end{equation}
the zeros $\neq \beta^{-1}$ of
$f_{\beta}(z)$ are
those of
$1 + \sum_{j=1}^{\infty} T_{\beta}^{j}(1) z^{j}$;  
but $1 + \sum_{j=1}^{\infty} T_{\beta}^{j}(1) z^{j}$ is 
a Taylor series which belongs to $\mathcal{B}$.
We deduce the claim.
\end{proof}

\begin{figure}
\begin{center}
\includegraphics[width=7cm]{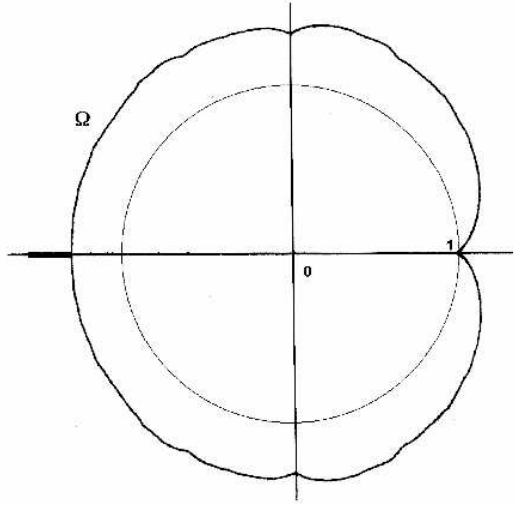}
\end{center}
\caption{Solomyak's set $\Omega$.} 
\label{fractalSOLO}
\end{figure}

Let us show that a phenomenon of high concentration and
equi-distribution of Galois conjugates 
$(\neq \beta)$ and beta-conjugates of a Parry number $\beta$
occur by clustering near the unit circle in $\Omega$: 
from a ``radial" viewpoint, using the 2-norm
$\|\cdot\|_2$ and the 1-norm 
$\|\cdot\|_1$ of the Parry polynomial
$n_{\beta}^{*}(X)$ (Theorem \ref{landauclustering}), 
and from an ``angular" viewpoint by
Mignotte's Theorem \ref{mignottethm} \cite{mignotte2}. 
Though densely distributed in $\Omega$ 
the conjugates 
of Parry numbers reach a very high concentration close to the unit circle
with maximality on the unit circle itself. Section \ref{S3.6} 
formulates limit theorems for this concentration phenomenon.

\begin{theorem}
\label{landauclustering}
Let $\beta > 1$ be a Parry number. 
Let $\epsilon > 0$ and $\mu_{\epsilon}$ the proportion of roots
of the Parry polynomial $n_{\beta}^{*}(X)$
of $\beta$, with $d_P = \deg(n_{\beta}^{*}(X)) \geq 1$, which lie
in $\Omega$ outside
the annulus $\left(\overline{D(0, (1-\epsilon)^{-1})} \setminus D(0,(1-\epsilon))
\right)$.
Then
\begin{equation}
\label{proportionroots}
(i) \quad \mu_{\epsilon} ~\leq~ 
\frac{2}{\epsilon \, d_P} \,
\left(
{\rm Log} \|n_{\beta}^{*}\|_2 - \frac{1}{2} {\rm Log}\beta  
\right),
\end{equation}
\begin{equation}
\label{proportionroots2}
(ii) \quad \mu_{\epsilon} ~\leq~
\frac{2}{\epsilon \, d_P} \,
\left(
{\rm Log} \|n_{\beta}^{*}\|_1 - \frac{1}{2} {\rm Log} \bigl|n_{\beta}^{*}(0)\bigr| 
\right).  
\end{equation}
\end{theorem}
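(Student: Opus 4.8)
The plan is to run the classical Jensen--Landau zero-counting argument on $P := n_{\beta}^{*}$, which is monic of degree $d_P$, and to exploit that $\beta$ is a dominant root. Write the roots of $P$ (the Galois- and beta-conjugates of $\beta$, together with $\beta$ itself) as $\alpha_1 = \beta, \alpha_2, \dots, \alpha_{d_P}$, set $s := 1-\epsilon$, and let $N_+ = \#\{i : |\alpha_i| > s^{-1}\}$ and $N_- = \#\{i : |\alpha_i| < s\}$, so that $\mu_\epsilon\, d_P = N_+ + N_-$ counts exactly the roots lying in $\Omega$ outside the annulus. Since $P$ is monic, Jensen's formula reads $\log \mathrm{M}(P) = \sum_{i=1}^{d_P} \log^{+}|\alpha_i|$, and the two comparison inequalities $\mathrm{M}(P) \le \|P\|_2 \le \|P\|_1$ (Landau's inequality and $\|\cdot\|_2\le\|\cdot\|_1$) are what convert root data into the norms appearing in the statement.

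First I would bound the two counts separately. For the outer roots, each of the $N_+$ roots with $|\alpha_i| > s^{-1}$ contributes at least $\log(s^{-1})$ to $\log \mathrm{M}(P)$, whence $N_+ \log(s^{-1}) \le \log \mathrm{M}(P)$. For the inner roots I would pass to the reciprocal polynomial $P^{*}(X) = X^{d_P} P(1/X)$, whose roots are the $\alpha_i^{-1}$ and whose leading coefficient is $n_{\beta}^{*}(0)$; using the invariances $\mathrm{M}(P^{*}) = \mathrm{M}(P)$ and $\|P^{*}\|_k = \|P\|_k$, the same argument applied to $P^{*}$ (for which the inner roots of $P$ become the outer roots) gives $N_- \log(s^{-1}) \le \log \mathrm{M}(P) - \log|n_{\beta}^{*}(0)|$. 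Adding the two, discarding nothing, and using $\log(s^{-1}) = -\log(1-\epsilon) \ge \epsilon$ together with $\mathrm{M}(P) \le \|P\|_1$ yields exactly \eqref{proportionroots2}; here $|n_{\beta}^{*}(0)| \ge 1$ since $0$ is not a root and $n_{\beta}^{*}\in\mathbb{Z}[X]$, so the retained term is harmless and all quantities stay nonnegative because $|n_{\beta}^{*}(0)|=\prod_i|\alpha_i|\le \mathrm{M}(P)$.

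For \eqref{proportionroots} the point is to replace $\tfrac12\log|n_{\beta}^{*}(0)|$ by the larger $\tfrac12\log\beta$, which is a genuinely stronger estimate and is where the dominant root must enter: because $\beta>1$ is itself a root, its full contribution $\log\beta$ can be separated off in the lower bound for $\log \mathrm{M}(P)$ beyond the generic factor $\log(s^{-1})$ supplied by the outer roots, improving the outer estimate to $N_+ \log(s^{-1}) \le \log \mathrm{M}(P) - \log\beta$; adding the inner estimate, dropping the nonnegative term $\log|n_{\beta}^{*}(0)|$, and inserting $\mathrm{M}(P) \le \|P\|_2$ with $\log(s^{-1}) \ge \epsilon$ then gives \eqref{proportionroots}. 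I expect the main obstacle to be precisely this dominant-root bookkeeping: one must treat separately the case in which $\beta$ lies outside the circle of radius $s^{-1}$ (equivalently $\epsilon \ge 1-\beta^{-1}$), where $\beta$ is already among the $N_+$ roots so that its contribution is partly double-counted and one must verify that the residual excess $\log(\beta s)$ still suffices, and the complementary case where $\beta^{-1}$ instead falls among the inner roots; controlling this single boundary root is the only delicate point, everything else being the routine estimate $-\log(1-\epsilon)\ge\epsilon$ and the norm comparisons.
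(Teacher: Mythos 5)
Your proof is correct and follows essentially the same route as the paper's: Landau's inequality $M \le \|\cdot\|_2$ for (i), resp.\ Jensen's formula together with $M \le \|\cdot\|_1$ for (ii), applied both to the Parry polynomial and to its reciprocal so as to count outer and inner roots separately, with the dominant root $\beta$ split off to produce the $\tfrac12\,{\rm Log}\,\beta$ term. The one point to tidy is that $\mu_\epsilon d_P$ equals $N_+ + N_-$ only when $\beta \le (1-\epsilon)^{-1}$; when $\beta$ is itself an outer root it must be subtracted (since $\beta \notin \Omega$), and it is exactly this subtraction --- the paper defines its $\mu_1 d_P$ as the outer count \emph{except} $\beta$ from the outset --- that absorbs the ``residual excess'' you flag in your final paragraph and closes the estimate.
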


\begin{proof}
(i) Let $\mu_1 d_P$ the number of roots of $n_{\beta}^{*}(X)$
outside $\overline{D(0, (1-\epsilon)^{-1})}$ in $\Omega$, except $\beta$
(since $\beta \not\in \Omega$).
By Landau's inequality \cite{landau}
$$
M(f) \leq \|f\|_{2} \qquad \quad \mbox{for}~ f(x) \in \mathbb{C}[X]$$
applied to $n_{\beta}^{*}(X)$ we deduce
$$\beta (1- \epsilon)^{-\mu_1 d_P} ~\leq~ 
M(n_{\beta}^{*}) ~\leq~ \|n_{\beta}^{*}\|_{2}.  
$$
Hence, since $-\mbox{Log}(1-\epsilon) \geq \epsilon$, 
$$
\mu_1 ~\leq~ 
\frac{1}{\epsilon} \, \left(
\frac{{\rm Log} \|n_{\beta}^{*}\|_{2}}{d_P} 
- \frac{{\rm Log} \beta}{d_P}
\right).$$
Let $\mu_2 d_P$ the
number of roots of $n_{\beta}^{*}(X)$
in $D(0, 1-\epsilon)$.
Then
$$(1-\epsilon)^{- \mu_{2} d_{P}} \leq M(n_{\beta}) \leq 
\|n_{\beta}\|_{2} = \|n_{\beta}^{*}\|_{2}  
$$
by Landau's inequality applied to $n_{\beta}(X)$.   
We deduce
$$\mu_2 ~\leq~
\frac{1}{\epsilon} \, 
\frac{{\rm Log} \|n_{\beta}^{*}\|_{2}}{d_P}
.$$
Since $\mu_{\epsilon} = \mu_1 + \mu_2$, we deduce the inequality \eqref{proportionroots}.

(ii) Applying Jensen's formula we deduce
\begin{equation}
\label{toto}
\frac{1}{2 \pi} \int_{0}^{2 \pi} {\rm Log} 
\bigl|n_{\beta}^{*}(e^{i \phi})\bigr|  d \phi
- {\rm Log} \bigl|n_{\beta}^{*}(0)\bigr|
= \sum_{|b_i| < 1} {\rm Log} \frac{1}{|b_i|}
\end{equation}
where $(b_i)$ is the collection of zeros of $n_{\beta}^{*}(z)$.
We have
$$\sum_{|b_i| < 1} {\rm Log} \frac{1}{|b_i|}
\geq 
\sum_{|b_i| < 1-\epsilon} {\rm Log} \frac{1}{|b_i|}
\geq~ \epsilon \, \mu_2 d_P .$$
From \eqref{toto}, since $\max_{\phi \in [0,2 \pi]} \, \bigl|n_{\beta}^{*}(e^{i \phi})\bigr|
\leq \|n_{\beta}^{*}\|_1$ , 
we deduce
$$
\mu_2 \leq~ \frac{1}{\epsilon \, d_P} \left(
{\rm Log} \|n_{\beta}^{*}\|_1
- {\rm Log} \bigl|n_{\beta}^{*}(0)\bigr|\right).$$
Now the roots of $n_{\beta}(z)$ inside $D(0,1-\epsilon)$
are the roots of $n_{\beta}^{*}(z)$ outside
the closed disk $\overline{D(0,(1-\epsilon)^{-1})}$, including
possibly $\beta$, so that their number is 
$\mu_1 d_P$ or $\mu_1 d_P + 1$.
Since $n_{\beta}^{*}(X)$ is monic, $| n_{\beta}(0) | = 1$.
We apply Jensen's formula to
$n_{\beta}(z)$ to deduce in a similar way
$$\mu_1 \leq~ \frac{1}{\epsilon \, d_P} 
{\rm Log} \|n_{\beta}\|_1 
$$
Since 
$\|n_{\beta}\|_1 = \|n_{\beta}^{*}\|_1$ 
and that $\mu_{\epsilon} = \mu_1 + \mu_2$,
we deduce \eqref{proportionroots2}.
\end{proof}

\begin{remark}
The terminology ``clustering near the unit circle" comes from the following fact:
if $(\beta_i)$ is a 
sequence of Parry numbers, of 
Parry polynomials of respective degree $d_{P,i}$ 
which satisfies
\begin{equation}
\label{seqinfinidegrees}
\lim_{i \to +\infty} d_{P,i} = +\infty 
\quad
{\rm and}
\quad
\lim_{i \to +\infty} \frac{{\rm Log} \, \beta_i}{d_{P,i}} = 0,
\end{equation}
then, since $\|n_{\beta_i}^{*}\|_{2} \leq
(d_{P,i} + 1)^{1/2} \,\lceil \beta_i \rceil$, 
the proportion
$\mu_{\epsilon,i}$ relative to $\beta_i$ satisfies
$$\mu_{\epsilon,i} \leq \frac{1}{\epsilon} \,
\left(
\frac{{\rm Log}(d_{P,i} + 1)}{d_{P,i}}
+
\frac{{\rm Log} \lceil \beta_i \rceil}{d_{P,i}}
\right)
$$
by \eqref{proportionroots}, what shows, for any real number $\epsilon > 0$, that 
\begin{equation}
\label{convergencespeed}
\mu_{\epsilon,i} \to 0, \qquad 
i \to +\infty.
\end{equation} 
The sufficient conditions 
\eqref{seqinfinidegrees}
for having convergence
of $(\mu_{\epsilon, i})_i$ to zero
already cover a large range of examples \cite{boyd3} \cite{vergergaugry2}.  
Let us notice that the conditions  
\eqref{seqinfinidegrees} do not imply that \begin{itemize}
\item
the corresponding
sequence $(d_i)_i$ of the degrees of the minimal polynomials
$P_{\beta_i}(X)$ tends to infinity; on the contrary, this sequence 
may remain bounded, even stationary (cf. 
the family of Bassino's cubic Pisot numbers in \cite{vergergaugry2}),
\item
the family of Parry numbers $(\beta_i)_i$ tends to infinity;
it may remain bounded or not 
(cf. Boyd's family of Pisot numbers
less than 2 in \cite{vergergaugry2}).
\end{itemize} 
\end{remark}

Define the radial operator
$\mbox{}^{(r)} : \mathbb{Z}[X] \to \mathbb{R}[X]$, 
$$R(X) = a_n \prod_{j=0}^{n} (X - b_j) ~\to~ 
R^{(r)}(X) = \prod_{j=0}^{n} \left(X-\frac{b_j}{|b_j|}\right).$$  
All the polynomials in the image of this operator have their roots
on the unit circle. This operator leaves invariant cyclotomic polynomials. It
has the property:
$P^{(r)} = (P^{*})^{(r)}$ for all polynomials $P(X) \in \mathbb{Z}[X]$
and 
is multiplicative:
$(P_{1} P_{2})^{(r)} = P_{1}^{(r)} P_{2}^{(r)}$ for 
$P_{1}(X), P_{2}(X) \in \mathbb{Z}[X]$.

The (angular) discrepancy relative to the 
distribution of the (Galois- and beta-)
conjugates of $\beta$ near the unit circle in
$\Omega$ is 
given by Erd\H{o}s-Tur\'an's Theorem \cite{erdosturan}
improved by Ganelius \cite{ganelius}, 
Mignotte 
\cite{mignotte2} and Amoroso \cite{amoroso2} \cite{amorosomignotte}, as follows.

\begin{theorem}[Mignotte]
\label{mignottethm}
Let $$R(X) = a_n X^n + a_{n-1} X^{n-1} + \ldots + a_1 X + a_0 = 
a_n \prod_{j=1}^{n} (X - \rho_j e^{i \phi_j}),$$  
$$a_n \neq 0, \qquad \rho_1, \rho_2, \ldots, \rho_n > 0,$$
be a polynomial with complex coefficients, where ~$\phi_j \in [0, 2 \pi)$~
for ~$j=1, \ldots, n$. For $0 \leq \alpha \leq \eta \leq 2 \pi$, put 
$N(\alpha, \eta) = $ Card$\{j \mid \phi_j \in [\alpha, \eta]\}$.
Let $k = \sum_{0}^{\infty} \frac{(-1)^{m-1}}{(2 m + 1)^2} = 0.916\ldots$ 
be Catalan's constant.
Then
\begin{equation}
\label{ineqdis}
\left|
\frac{1}{n} N(\alpha, \eta) - \frac{\eta - \alpha}{2 \pi} 
\right|^2 \leq 
\frac{2 \pi}{k} \times \frac{\tilde{h}(R)}{n}  
\end{equation}
where
\begin{equation}
\label{tildedis}
\tilde{h}(R) = \frac{1}{2 \pi} \int_{0}^{2 \pi} \, \mbox{{\rm Log}}^{+} |R^{(r)}(e^{i \theta})| d \theta
.
\end{equation}
\end{theorem}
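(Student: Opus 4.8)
The plan is to reduce the statement to a discrepancy estimate for the arguments of the zeros of the radial polynomial $P := R^{(r)}(X) = \prod_{j=1}^{n}(X - e^{i\phi_j})$, since both sides of \eqref{ineqdis} depend on $R$ only through $R^{(r)}$: the count $N(\alpha,\eta)$ involves only the arguments $\phi_j$, which the radial operator leaves fixed, and $\tilde{h}(R)$ is by definition an integral of $R^{(r)}$. So I may assume $R = P$ is monic with all zeros on the unit circle. Then $M(P)=1$, and by Jensen's formula $\frac{1}{2\pi}\int_0^{2\pi}\log|P(e^{i\theta})|\,d\theta = \log M(P) = 0$; hence the $\log^+$ and $\log^-$ parts of $g(\theta):=\log|P(e^{i\theta})|$ have equal integrals, giving $\|g\|_{L^1} = 2\int_0^{2\pi}\log^+|P(e^{i\theta})|\,d\theta = 4\pi\,\tilde{h}(R)$.

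Next I would Fourier-analyse $g$. From $|e^{i\theta}-e^{i\phi}| = 2|\sin\frac{\theta-\phi}{2}|$ and the classical expansion $\log|2\sin\frac{t}{2}| = -\sum_{m\ge1}\frac{\cos mt}{m}$, one obtains
$$g(\theta) = -\sum_{m\ge1}\frac{1}{m}\,\mathrm{Re}\!\left(\overline{S_m}\,e^{im\theta}\right),\qquad S_m := \sum_{j=1}^{n} e^{im\phi_j},$$
so that $\hat{g}(0)=0$ and $\hat{g}(-m) = -S_m/(2m)$ for $m\ge1$: the Fourier data of $g$ are exactly the power sums of the arguments. On the other hand, for any real trigonometric test function $u = \sum_m \hat{u}(m)e^{im\theta}$ the Weyl-sum identity $\sum_j u(\phi_j) - \frac{n}{2\pi}\int_0^{2\pi}u\,d\theta = \sum_{m\ne0}\hat{u}(m)S_m$ holds; substituting $S_m = -2|m|\hat{g}(-m)$ and applying Parseval converts the right-hand side into a pairing
$$\sum_j u(\phi_j) - \frac{n}{2\pi}\int_0^{2\pi}u\,d\theta = -\frac{1}{\pi}\int_0^{2\pi} v(\theta)\,g(\theta)\,d\theta,\qquad \hat{v}(m):=|m|\,\hat{u}(m).$$
Thus the angular discrepancy is paired against $g=\log|P|$ through the conjugate-derivative operator $u \mapsto v$.

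The core step is then to detect the arc $[\alpha,\eta]$. Taking a trigonometric majorant $u \ge \mathbf{1}_{[\alpha,\eta]}$ and bounding the pairing by $\frac{1}{\pi}\|v\|_\infty\|g\|_{L^1} = 4\|v\|_\infty\,\tilde{h}(R)$ gives
$$N(\alpha,\eta) - \frac{n(\eta-\alpha)}{2\pi} \;\le\; n\,E(u) + 4\,\|v\|_\infty\,\tilde{h}(R),\qquad E(u):=\frac{1}{2\pi}\int_0^{2\pi}\bigl(u-\mathbf{1}_{[\alpha,\eta]}\bigr)\,d\theta \ge 0,$$
with a minorant supplying the matching lower bound for $\left|\tfrac{1}{n}N(\alpha,\eta)-\tfrac{\eta-\alpha}{2\pi}\right|$. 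Since the right-hand side is a sum of a term $\sim n\,E(u)$ and a term $\sim \|v\|_\infty\,\tilde{h}(R)$ carrying a free scale, minimising over an optimal family of detecting functions produces the $\sqrt{n\,\tilde{h}(R)}$ scaling, and squaring yields the stated form $\frac{2\pi}{k}\,\tilde{h}(R)/n$.

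The main obstacle is the \emph{sharp} constant $2\pi/k$: the estimate $|\int vg|\le\|v\|_\infty\|g\|_{L^1}$ is lossy, so the exact value must come from solving the extremal problem balancing the excess mass $E(u)$ against $\|v\|_\infty$. This is precisely where Catalan's constant enters, through the extremal trigonometric function $\sum_{m\ge0}\frac{\sin((2m+1)\theta)}{(2m+1)^2}$, whose derivative $\frac{1}{2}\log|\cot(\theta/2)|$ ties it directly to the kernel $\log|2\sin|$ governing $g$, and whose maximum at $\theta=\pi/2$ equals $k=\sum_{m\ge0}\frac{(-1)^m}{(2m+1)^2}$. Pinning down this extremal constant, following the refinements of Erd\H{o}s--Tur\'an due to Ganelius and Mignotte, is the delicate analytic step; the remainder is the bookkeeping of the Fourier pairing above.
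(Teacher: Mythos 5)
The paper does not prove this statement: it is quoted as Mignotte's theorem, with the chain Erd\H{o}s--Tur\'an $\to$ Ganelius $\to$ Mignotte $\to$ Amoroso cited for the constant $2\pi/k$, so there is no in-paper argument to compare yours against. Judged on its own terms, your outline correctly reproduces the architecture of the Ganelius--Mignotte proof: the reduction to the monic radialized polynomial $P=R^{(r)}$ is legitimate (both sides of \eqref{ineqdis} indeed depend on $R$ only through the arguments $\phi_j$ and through $R^{(r)}$, and the hypothesis $\rho_j>0$ guarantees the radialization is defined); the Jensen step giving $\|\log|P|\,\|_{L^1}=4\pi\,\tilde{h}(R)$ is correct since $M(P)=1$; the Fourier identification $\hat g(-m)=-S_m/(2m)$ and the Parseval pairing of the Weyl sums against $\log|P|$ via the conjugate-derivative operator $\hat v(m)=|m|\hat u(m)$ are all sound bookkeeping.

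The genuine gap is the step you yourself flag as ``delicate'': everything that makes the theorem true with the constant $2\pi/k$ rather than some unspecified constant lives in the extremal problem you do not solve. Concretely, after the majorant/minorant reduction you need a family $u_\lambda\geq \mathbf{1}_{[\alpha,\eta]}$ (and the matching minorants, which you mention only in passing but which are required to control $\bigl|\frac1n N-\frac{\eta-\alpha}{2\pi}\bigr|$ from both sides) for which the product $E(u_\lambda)\cdot\|v_\lambda\|_\infty$ is bounded by $\pi/(8k)$ uniformly in the arc $[\alpha,\eta]$; only then does the arithmetic--geometric balancing of $n\,E(u)+4\|v\|_\infty\tilde h(R)$ deliver $\bigl|N-\frac{n(\eta-\alpha)}{2\pi}\bigr|\leq \sqrt{(2\pi/k)\,n\,\tilde h(R)}$. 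Asserting that the extremal function is $\sum_{m\geq0}\sin((2m+1)\theta)/(2m+1)^2$ and that its maximum is $k$ identifies where Catalan's constant comes from, but does not construct the admissible majorants, verify their excess mass, or check uniformity in the endpoints $\alpha,\eta$ (the truncation and smoothing needed to make $u$ a finite trigonometric polynomial while preserving $u\geq\mathbf{1}_{[\alpha,\eta]}$ is exactly where Ganelius and Mignotte spend their effort). As it stands the argument proves an Erd\H{o}s--Tur\'an-type inequality with \emph{some} absolute constant, which is weaker than the stated theorem; for the purposes of this paper that distinction matters, since the explicit value $2\pi/k$ is used quantitatively in \eqref{betaconjumultineq} and in Table 1.
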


Denote dis$(R) = \frac{\tilde{h}(R)}{n}$.
Let us call Mignotte's discrepancy function the rhs of \eqref{ineqdis} so that
$$C \cdot \mbox{dis}(R) ~=~ \frac{2 \pi}{k} \times \frac{\tilde{h}(R)}{n}  
$$
with $C = \frac{2 \pi}{k}= (2.619...)^2 = 6.859...$. 
The constant $C$, the same as in \cite{ganelius}, 
is much smaller than 16$^2 = 256$,  
computed in \cite{erdosturan}.  
Mignotte shows that $C$ cannot be less than
$(1.759...)^2 = 3.094...$ and that 
dis$(R)$ gives much smaller numerical estimates 
(cf. example in Section \ref{S4})  
than the expression 
$$\frac{1}{n} \, {\rm Log}\frac{L(R)}{\sqrt{|a_0 \, a_n|}}$$
proposed
by Erd\H{o}s-Tur\'an instead. 
In the following we investigate
Mignotte's discrepancy function (as in \cite{amoroso2}) 
but applied to the new class
of polynomials 
$$\{ R^{(r)}(X) \mid R(X) \in \mathcal{PP} \}$$
obtained by the radial operation $^{(r)}$ 
from the set of Parry polynomials
$$\mathcal{PP} := \{ n_{\beta}^{*}(X) \mid \beta \in \mathbb{P}_{P} \}.$$

The angular control of the geometry of the beta-conjugates
of a Parry number $\beta$ with respect to the 
geometry of its Galois-conjugates, by 
rotating sectors of suitable opening angles
in the unit disk \cite{vergergaugry2}, 
is best for smallest possible estimates of the discrepancy function.

Mignotte's discrepancy function on 
$\mathbb{C}[X] \setminus \{0\}$ never 
takes the value zero for the simple reason that,
in \eqref{ineqdis}, the function 
$(\alpha, \eta) \to (\eta - \alpha)/2 \pi$ is continuous and
that $(\alpha, \eta) \to N(\alpha, \eta)/n$ 
takes discrete values, so that
\begin{equation}
\label{minipopo}
{\rm dis} \bigl(n_{\beta}^{* \, (r)}\bigr) ~>~ 0 
\end{equation}
for all $\beta \in \mathbb{P}_{P}$.  
In Section \ref{S3.5} we  
give a lower estimate of 
${\rm dis} \bigl(n_{\beta}^{* \, (r)}\bigr)$.

Schinzel \cite{schinzel} has asked a certain number of questions, and
reported some conjectures and theorems, 
on the number of cyclotomic factors, resp. non-cyclotomic non-reciprocal,
resp. non-cyclotomic reciprocal factors in the 
factorization into irreducible factors 
of a general polynomial
with integer coefficients. 
For giving an upper estimate of 
${\rm dis} \bigl(n_{\beta}^{* \, (r)}\bigr)$ 
we will reformulate these  questions 
in the particular context of Parry polynomials 
and state some theorems (Section \ref{S3.2}).    
Then, as a consequence, 
we will  
separate out the contributions relative to 
the irreducible factors, since
Mignotte's discrepancy function allows to do it, by the properties
of Log$^{+} x$ and the multiplicativity
of the radial operator $^{(r)}$: 
from \eqref{numerat} and \eqref{tildedis},  
with $R(X) = n_{\beta}(X)$ or
$R(X) =  n_{\beta}^{*}(X)$, 
the splitting is as follows
\begin{equation}
\label{decompodis}
\tilde{h}(n_{\beta}^{*}) ~=~ \tilde{h}(n_{\beta}) ~\leq~ 
\tilde{h}(P_{\beta}) + 
\tilde{h}(\prod_{j=0}^{s} \, \Phi_{n_j}^{c_j}) +
\tilde{h}(\prod_{j=0}^{q} \, \kappa_{j}^{\gamma_j}) 
+ 
\tilde{h}(\prod_{j=0}^{u} \, g_{j}^{\delta_j}).  
\end{equation}
Section \ref{S3.3} is relative to the contributions 
of cyclotomic factors and Section \ref{S3.4} 
to those of non-cyclotomic factors.   
In Section \ref{S4} 
the numerical optimality 
of this splitting process is studied on examples.

\subsection{Factorization and irreducible factors}
\label{S3.2}

Let $\beta > 1$ be a Perron number 
given by its minimal polynomial $P_{\beta}(X)$, 
for which we
know that it is a Parry number. 
The factorization of its Parry polynomial
$n_{\beta}^{*}(X)$  
amounts to the knowledge of: 
\begin{itemize}
\item[(i)] its degree $d_P$, as a function of $\beta$, 
its Galois conjugates and $d$ the degree of the minimal
polynomial $P_{\beta}(X)$,
\item[(ii)] its distinct irreducible factors and their multiplicities.  
\end{itemize}

The determination of degree $d_P = m+p+1$ of $n_{\beta}^{*}(X)$
(with the notations
of Theorem \ref{szegothm}), which expresses the dynamics of the $\beta$-transformation,
brings to light the need of the geometrical representation of the
$\beta$-shift, the Rauzy fractal \cite{pytheasfogg}
\cite{baratbertheliardetthuswaldner}, 
above the set $\mathbb{Z}_{\beta}$
of $\beta$-integers,
for providing
upper estimates of 
it, as shown below for Pisot numbers.
For a generic Parry number $\beta$, an explicit formula for 
$d_P$ as a function of $\beta$, 
its Galois conjugates $\beta^{(i)}$
and the degree $d$ of
the minimal polynomial $P_{\beta}(X)$ is probably 
difficult to obtain in general, if it exists.

Assume that $\beta > 1$ is a Pisot number, of degree $d \geq 2$.
We refer to \cite{gazeauvergergaugry} p. 142 and Lemma 4.6 
for details.
Let $\{Z_0, Z_1, \ldots, Z_{d-1}\}$ be the canonical basis of
$\mathbb{R}^{d}$ and $x \cdot y = \sum_{j=0}^{d-1} x_j y_j$ the standard scalar
product in this basis.
Let $B = \mbox{}^{t}(1 ~\beta~ \beta^2 \ldots \beta^{d-1})$  and
$u_B = \|B\|^{-1} B$, 
where $\mbox{}^{t}$ means transposition. Let 
$\mbox{}^{t}Q$ be the companion matrix of $\beta$, 
$\pi_B$ is the orthogonal projection mapping onto
$\mathbb{R} u_B$. \,$F$ denotes   
generically 
a $\mbox{}^{t}Q$-invariant subspace of $\mathbb{R}^{d}$, of
dimension 1 if the eigenvalue of
$\mbox{}^{t}Q$ on $F$ (i.e. one of the Galois conjugates
$\beta^{(i)}$) is real, resp. of dimension 2 if  
it is complex, except the expanding line $\mathbb{R} u$
(\cite{gazeauvergergaugry} Theorem 3.1
and Theorem 3.5). The projection mapping onto $F$,
$\mathbb{R}^{d} \to F$, along its 
$\mbox{}^{t}Q$-invariant 
complementary space
is denoted by $\pi_F$.
Let $p_2 = \oplus_{F} \pi_F$
be the sum of the projection mappings.  
Let $\mathcal{C}' := \left\{\sum_{j=0}^{d-1} \alpha_j Z_j \mid
\alpha_j \in [-1, +1] \, \mbox{for all}~ j=0, 1, \ldots, d-1\right\}$,
$\delta_{F}' := \max_{x \in \mathcal{C}'} \|\pi_{F}(x)\|$, 
$\lambda_F :=$ the absolute value of the Galois conjugate of
$\beta$ associated with $F$, and
$c_{F}' := \lfloor \beta \rfloor \frac{\delta_{F}'}{1 - \lambda_{F}^{d}}$.  
The canonical cut-and-project scheme associated with
the set $\mathbb{Z}_{\beta}$ of the beta-integers
is
$$\mathbb{R} u_B ~\stackrel{\pi_B}{\longleftarrow}~ (\mathbb{R} u_B \times D \simeq \mathbb{R}^{d}, \mathbb{Z}^{d})
~\stackrel{p_2}{\longrightarrow}~ D = \oplus F.$$
Let
$\Omega'$ be 
the $(d-1)$-dimensional window 
$\oplus_F \Omega'_{F}$ in $D$, direct sum of 
the windows $\Omega'_{F}$ on $F$ 
defined by
$$
\Omega'_{F}
=
\left\{
\begin{array}{l}
\mbox{closed interval centred at 0 in $F$ of length $2 c_{F}'$ if dim $F = 1$},\\
\mbox{closed disk centred at 0 in $F$ of radius $c_{F}'$ if dim $F = 2$}.
\end{array}
\right.
$$
This window
$\Omega'$ is sized at its best on each $1$-dim or $2$-dim
subspace $F$ to contain the $1$-dimensional, resp. the $2$-dimensional, 
sections of the 
central tile of the Rauzy fractal. 

\begin{theorem}
\label{dPupperboundthm}
Let $\beta > 1$ be a Pisot number of degree $d \geq 2$.
Then
\begin{equation}
\label{dpineq}
d_P
\leq
~\# 
\left\{x \in \mathbb{Z}^d \mid p_{2}(x) ~\in \,
\frac{{\rm H}(n_{\beta}^{*})}{\lfloor \beta \rfloor} \, \Omega' , 
~~\pi_{B}(x) \cdot u_B ~\in \Bigl[0, \frac{1}{\|B\|}\Bigr) \right\}. 
\end{equation}
\end{theorem}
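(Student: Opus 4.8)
The plan is to exhibit an injection from a set of $d_P$ distinct elements of $\mathbb{Z}[\beta]$ attached to the $\beta$-transformation into the lattice points counted on the right-hand side of \eqref{dpineq}. First I would translate $d_P$ into orbit data. By \eqref{nonsimplepoly1}--\eqref{simplepoly3} and the identities $n_\beta^*(\beta)=T_\beta^{m+p+1}(1)-T_\beta^m(1)=0$ (resp.\ $T_\beta^{p+1}(1)-1=0$ in the purely periodic case, resp.\ $T_\beta^m(1)=0$ in the simple case), the orbit $(T_\beta^j(1))_{j\ge 0}$ is eventually periodic of preperiod length $m$ and period length $p+1$ (finite, ending in $0$, when $\beta$ is simple), so it has exactly $d_P=m+p+1$ distinct terms $T_\beta^0(1),\ldots,T_\beta^{m+p}(1)$. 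Since $T_\beta^0(1)=1\notin[0,1)$ whereas $0$ never occurs in a non-simple orbit, I would package the relevant values into
\[
S:=\{0\}\cup\{\,T_\beta^j(1)\mid 1\le j\le d_P-1\,\},
\]
which in every case (simple, non-simple, purely periodic) consists of exactly $d_P$ distinct reals lying in $[0,1)\cap\mathbb{Z}[\beta]$ (using $t_m\ge 1$ for the simple case, so that $T_\beta^j(1)\neq 0$ for $j<m$).

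Next I would build the injection. Because $\beta$ is an algebraic integer of degree $d$, the family $1,\beta,\ldots,\beta^{d-1}$ is a $\mathbb{Z}$-basis of $\mathbb{Z}[\beta]$, so every $s\in S$ has a unique expression $s=\sum_{k=0}^{d-1}x_k\beta^k$ with $x_k\in\mathbb{Z}$; set $\phi(s):=(x_0,\ldots,x_{d-1})\in\mathbb{Z}^d$. This map is injective and satisfies $\phi(s)\cdot B=s$. The expanding (physical-space) condition is then immediate: $\pi_B(\phi(s))\cdot u_B=\|B\|^{-1}\bigl(\phi(s)\cdot B\bigr)=s/\|B\|\in[0,\|B\|^{-1})$, precisely because $s\in[0,1)$. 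Granting the internal-space condition $p_2(\phi(s))\in\frac{H(n_\beta^*)}{\lfloor\beta\rfloor}\Omega'$ for each $s\in S$, the $d_P$ points $\phi(s)$ are then distinct lattice points lying in the counted set, and \eqref{dpineq} follows.

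The main obstacle is the internal-space membership, and here I would lean on the sizing of $\Omega'$ recorded in \cite{gazeauvergergaugry} (Theorems 3.1, 3.5 and Lemma 4.6). For $s=0$ it is trivial, each window $\Omega'_F$ being centred at $0$. For $s=T_\beta^j(1)$, fix a ${}^{t}Q$-invariant subspace $F$ with associated conjugate $\beta^{(l)}$, $|\beta^{(l)}|=\lambda_F<1$; under the Galois embedding $\sigma_F:\beta\mapsto\beta^{(l)}$ the component $\pi_F(\phi(s))$ is carried to $\sigma_F(T_\beta^j(1))$, and the recursion $T_\beta^j(1)=\beta\,T_\beta^{j-1}(1)-t_j$ with $T_\beta^0(1)=1$ gives $\sigma_F(T_\beta^j(1))=(\beta^{(l)})^j-\sum_{i=1}^{j}t_i(\beta^{(l)})^{j-i}$. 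Since every digit satisfies $t_i\le\lfloor\beta\rfloor$, grouping the conjugate of the $\beta$-integer part $\sum_i t_i\beta^{j-i}$ into blocks of length $d$ and projecting each block through $\pi_F$ (each block contributing at most $\lfloor\beta\rfloor\,\delta'_F$ and contracting by $\lambda_F^d$) yields $\|\pi_F(\phi(s))\|\le \lfloor\beta\rfloor\,\delta'_F/(1-\lambda_F^d)=c'_F$, which already places $p_2(\phi(s))$ in $\Omega'\subseteq\frac{H(n_\beta^*)}{\lfloor\beta\rfloor}\Omega'$ (the dilation factor being $\ge 1$ by \eqref{hauteurineqs}, the constant $H(n_\beta^*)$ furnishing the uniform margin tied to the coefficients of $n_\beta^*$). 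Assembling the three conditions gives the injection $\phi$ and hence \eqref{dpineq}. The delicate point throughout is the block estimate identifying $c'_F$ as the correct window radius on each $F$, which is exactly the content of Lemma~4.6 of \cite{gazeauvergergaugry}.
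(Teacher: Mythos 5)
Your argument is correct and follows the same route as the paper, which simply observes that the $T_{\beta}^{j}(1)$ are polynomials in $\beta$ of degree $j$ with coefficients of modulus at most ${\rm H}(n_{\beta}^{*})$, equal to their fractional parts, and then invokes Lemma~4.6 of \cite{gazeauvergergaugry}. What you have done is unpack that lemma explicitly --- the injection $s\mapsto\phi(s)$ into $\mathbb{Z}^d$, the physical-space condition from $s\in[0,1)$, and the block estimate giving $\|\pi_F(\phi(s))\|\le c_F'$ --- together with a cleaner accounting of the $d_P$ distinct orbit values via $\{0\}\cup\{T_{\beta}^{j}(1)\}$, so the proposal matches the paper's proof in substance.
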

 
\begin{proof}
The polynomials $T_{\beta}^{j}(1)$, for $j=1, 2, \ldots, d_P$,  
are all polynomials in $\beta$ with coefficients
having their modulus in  $\{0, 1, \ldots, {\rm H}(n_{\beta}^{*})\}$, 
of degree $j$, which are equal 
to their fractional part. The collection $(T_{\beta}^{j}(1))_{j=1, \ldots, d_P - 1}$ 
is a family of linearly independent
polynomials. Theorem \ref{dPupperboundthm}
is then a consequence of Lemma 4.6 in \cite{gazeauvergergaugry}.
\end{proof}

\begin{remark}
A much better upper bound of $d_P$ would be given by the rhs of
\eqref{dpineq} in which the ``box" 
$\Omega'$ is replaced by the true central tile of the Rauzy fractal
\cite{pytheasfogg}.
Indeed, this central tile may be disconnected, may contain lots of holes, and its topology
is a prominent ingredient for counting points of the lattice
$\mathbb{Z}^{d}$ which are projected by $p_2$ to
this central tile.   
\end{remark}

Let us now turn to the factorization of 
$n_{\beta}^{*}(X)$ for $\beta$ a Parry number.

\begin{theorem}
\label{multiplicitybetaconju}
Let $\beta$ be a Parry number. If $\xi$ is a beta-conjugate of $\beta$
which is not a unit, then its multiplicity $\nu_{\xi}$ as root of the Parry polynomial
$n_{\beta}^{*}(X)$ satisfies:
\begin{equation}
\label{multicxi}
\nu_{\xi} ~\leq~ \frac{1}{\log 2} \Bigl( \log\bigl({\rm H}(n_{\beta}^{*})\bigr) - \log |N(\beta)| \Bigr).
\end{equation}
Moreover, if 
\begin{equation}
\label{normabove}
|N(\beta)| ~\geq~ \frac{{\rm H}(n_{\beta}^{*})}{3}, 
\end{equation}
then all beta-conjugates of $\beta$ which are not units (if any)
are simple roots of
$n_{\beta}^{*}(X)$.
\end{theorem}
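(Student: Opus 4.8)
The plan is to extract the bound purely from the arithmetic of the constant term $n_\beta^*(0)$, using that (since $n_\beta^*(X)$ is monic) this constant term is, up to sign, the product of all the roots, and hence factors through the relevant algebraic norms. The whole argument is driven by the single decision to evaluate the factorization of $n_\beta^*$ at $X=0$.

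First I would record three elementary facts. (a) Because $n_\beta^*(X)$ is monic with integer coefficients, $\beta$, its Galois conjugates, and all beta-conjugates are algebraic integers, and none of them is $0$: the Galois conjugates are nonzero since $N(\beta)\neq 0$, and a beta-conjugate $\xi$ is the inverse of a zero of $f_\beta$, which is finite and nonzero because $f_\beta(0)=t_0=-1\neq 0$. Hence $n_\beta^*(0)\neq 0$. (b) Since $n_\beta^*(0)$ is one of the coefficients of $n_\beta^*$, one has $|n_\beta^*(0)|\leq {\rm H}(n_\beta^*)$. (c) If $\xi$ is a beta-conjugate that is not a unit, its minimal polynomial $Q_\xi(X)$ is monic in $\mathbb{Z}[X]$, irreducible, and distinct from $P_\beta$ (otherwise $\xi$ would be a Galois conjugate of $\beta$); writing $N_{\mathbb{Q}}(\xi)=\prod_j \xi^{(j)}$ for the product of the conjugates of $\xi$, we get $|Q_\xi(0)|=|N_{\mathbb{Q}}(\xi)|\geq 2$, the inequality being precisely the statement that $\xi$ is not a unit. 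Likewise $|P_\beta(0)|=|N(\beta)|$.

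Next comes the factorization step. Since $\xi$ is a root of $n_\beta^*$ of multiplicity $\nu_\xi$ and $Q_\xi\neq P_\beta$, the factor $Q_\xi^{\nu_\xi}$ occurs in $n_\beta^*$, so $n_\beta^*(X)=P_\beta(X)\,Q_\xi(X)^{\nu_\xi}\,D(X)$ for some monic $D\in\mathbb{Z}[X]$. Evaluating at $X=0$ and taking absolute values,
$$|n_\beta^*(0)| = |P_\beta(0)|\cdot|Q_\xi(0)|^{\nu_\xi}\cdot|D(0)| = |N(\beta)|\cdot|N_{\mathbb{Q}}(\xi)|^{\nu_\xi}\cdot|D(0)|.$$
By fact (a) the integer $D(0)$ is nonzero, so $|D(0)|\geq 1$; combining with (c) yields $|n_\beta^*(0)|\geq |N(\beta)|\cdot 2^{\nu_\xi}$. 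Feeding in (b) gives $2^{\nu_\xi}\leq {\rm H}(n_\beta^*)/|N(\beta)|$, and taking logarithms produces \eqref{multicxi} at once. For the second assertion, the hypothesis \eqref{normabove} turns this into $2^{\nu_\xi}\leq 3$, whence $\nu_\xi\leq \log 3/\log 2 < 2$; since $\nu_\xi$ is a positive integer, $\nu_\xi=1$, i.e. $\xi$ is a simple root.

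I do not expect a genuine analytic obstacle here: once one evaluates at $0$, the inequality chain is immediate. The only points that demand care are the bookkeeping in (a) and (c). One must verify that $0$ is never a root of $n_\beta^*$, so that $|D(0)|\geq 1$ rather than $0$, and one must correctly identify the constant term of the minimal polynomial $Q_\xi$ with $\pm N_{\mathbb{Q}}(\xi)$ together with the characterization of algebraic units as those integers of norm $\pm 1$. These are the steps I would write out carefully, while the rest is routine.
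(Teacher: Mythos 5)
Your proof is correct and follows essentially the same route as the paper's: both evaluate the factorization of the monic Parry polynomial at $X=0$, bound $|n_{\beta}^{*}(0)|$ by ${\rm H}(n_{\beta}^{*})$, and use that a non-unit beta-conjugate contributes a factor $|Q_{\xi}(0)|^{\nu_{\xi}}=|N(\xi)|^{\nu_{\xi}}\geq 2^{\nu_{\xi}}$ to obtain $2^{\nu_{\xi}}\leq {\rm H}(n_{\beta}^{*})/|N(\beta)|$ and then both conclusions. Your explicit verification that $n_{\beta}^{*}(0)\neq 0$ (so $|D(0)|\geq 1$) is a point the paper leaves implicit, but the argument is the same.
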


\begin{proof}
From \eqref{numerat}, since $P_{\beta}(X)$ divides $n_{\beta}^{*}(X)$
and that H$(n_{\beta}^{*}) \in \{\lfloor \beta \rfloor, \lceil \beta \rceil\}$
(cf. \eqref{hauteurineqs}, \eqref{hauteursimple}), we have
$$
\bigl|
\prod_{j=1}^{q} \left(\kappa_{j}^{*}(0)\right)^{\gamma_j} \bigr| \times
\bigl| \prod_{j=1}^{u} \left(g_{j}(0)\right)^{\delta_j} \bigr|
~\leq~ \frac{{\rm H}(n_{\beta}^{*})}{|N(\beta)|}.$$
As a consequence, if $\xi$ is a beta-conjugate which is not 
a unit, then, since the irreducible
factors of $n_{\beta}^{*}(X)$ are all monic, the inequality $|N(\xi)| \geq 2$ implies
$$2^{\nu_{\xi}} ~\leq~ \frac{{\rm H}(n_{\beta}^{*})}{|N(\beta)|}.$$
Hence the claim. 
Now, if
$|N(\beta)| ~\geq~ \frac{{\rm H}(n_{\beta}^{*})}{3}$ then 
$$\bigl|
\prod_{j=1}^{q} \left(\kappa_{j}^{*}(0)\right)^{\gamma_j} \bigr| \times
\bigl| \prod_{j=1}^{u} \left(g_{j}(0)\right)^{\delta_j} \bigr|
 \leq 3,$$
which necessarily implies that
$\nu_{\xi} = 1$ for each beta-conjugate $\xi$ of $\beta$ which is not a unit. 
\end{proof}

\begin{corollary}
\label{betaconj13simple}
The beta-conjugates of a Parry number $\beta \in (1, 3)$
which are not units
are always simple roots of the Parry polynomial of $\beta$.
\end{corollary}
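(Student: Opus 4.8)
The plan is to check that every $\beta \in (1,3)$ automatically fulfils the sufficient condition \eqref{normabove} of Theorem \ref{multiplicitybetaconju}, after which the assertion follows at once.

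First I would bound the height of the Parry polynomial from above. By \eqref{hauteurineqs} we have ${\rm H}(n_{\beta}^{*}) \leq \lceil \beta \rceil$, and for a non-integer $\beta \in (1,3)$ the ceiling $\lceil \beta \rceil$ equals $2$ when $\beta \in (1,2)$ and $3$ when $\beta \in (2,3)$; in both cases ${\rm H}(n_{\beta}^{*}) \leq 3$.

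Next I would record the trivial lower bound $|N(\beta)| \geq 1$: since $\beta$ is an algebraic integer, its norm $N(\beta) = \prod_{i=0}^{d-1} \beta^{(i)}$ is a nonzero rational integer, so $|N(\beta)| \geq 1$. Combining the two estimates gives
$$\frac{{\rm H}(n_{\beta}^{*})}{3} \leq 1 \leq |N(\beta)|,$$
which is exactly hypothesis \eqref{normabove}. Theorem \ref{multiplicitybetaconju} then forces $\nu_{\xi} = 1$ for every beta-conjugate $\xi$ of $\beta$ that is not a unit, that is, all such beta-conjugates are simple roots of $n_{\beta}^{*}(X)$.

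There is no real obstacle here, the corollary being a direct specialisation of Theorem \ref{multiplicitybetaconju}; the only points to verify are the elementary height bound coming from \eqref{hauteurineqs} and the immediate fact that the algebraic norm of an algebraic integer has absolute value at least one.
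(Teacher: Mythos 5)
Your proof is correct and follows exactly the paper's own argument: bound ${\rm H}(n_{\beta}^{*}) \leq \lceil \beta \rceil \leq 3$ via \eqref{hauteurineqs}, use $|N(\beta)| \geq 1$ to verify hypothesis \eqref{normabove}, and apply Theorem \ref{multiplicitybetaconju}. No differences worth noting.
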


\begin{proof}
Indeed, ${\rm H}(n_{\beta}^{*}) \in \{\lfloor \beta \rfloor,
\lceil \beta \rceil\}$ and $\lceil \beta \rceil \leq 3$. Thus
${\rm H}(n_{\beta}^{*})/3 \leq 1$. But 
$|N(\beta)| \geq 1$ so that
\eqref{normabove} is satisfied.  
\end{proof}




Let $\beta$ be a Parry number, for which the Parry polynomial
$n_{\beta}^{*}(X)$ is factored as in \eqref{numeratParryPol}. We have

$$
\begin{array}{cl}
1+s+q+u ~= & 
\mbox{number of its distinct irreducible factors}\\
\sum_{j=1}^{s} c_j ~= & \mbox{number of its cyclotomic (irreducible)}\\
 & \mbox{factors counted with multiplicities}\\ 
1+\sum_{j=1}^{s} c_j + \sum_{j=1}^{q} \gamma_j + \sum_{j=1}^{u} \delta_j ~= & 
\mbox{number of its irreducible factors}\\
& \mbox{counted with multiplicities}\\
1+ \sum_{j=1}^{q} \gamma_j + \sum_{j=1}^{u} \delta_j ~= & 
\mbox{number of its non-cyclotomic irreducible}\\
& \mbox{factors counted with multiplicities}\\
1+q+u ~= & \mbox{number of its non-cyclotomic irreducible}\\
& \mbox{factors counted without multiplicities}\\
\gamma + \sum_{j=1}^{q} \gamma_j ~= & 
\mbox{number of its non-reciprocal irreducible}\\
& \mbox{factors counted with multiplicites, with}\\
& \mbox{$\gamma = 1$ if $P_{\beta}(X)$ is non-reciprocal, and}\\
& \mbox{$\gamma = 0$ if $P_{\beta}(X)$ is reciprocal}\\
\end{array}  
$$

The remarkable result of Smyth \cite{smyth} implies easily
\begin{theorem}
\label{smyththm}
For every Parry number $\beta$, the inequality
\begin{equation}
\label{nonreciprocalineq} 
\gamma + \sum_{j=1}^{q} \gamma_j ~<~ 
\frac{\log \|n_{\beta}^{*}\|_2}{\log \theta_0}
\end{equation}
holds where $\theta_0 = 1.3247...$ is 
the smallest Pisot number, dominant root of $X^3-X-1$,
where $\gamma = 1$ if $P_{\beta}(X)$ is non-reciprocal and
$\gamma = 0$ if $P_{\beta}(X)$ is reciprocal.
\end{theorem}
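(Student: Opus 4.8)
The plan is to combine the multiplicativity of the Mahler measure over the factorization \eqref{numeratParryPol} with Smyth's lower bound $\mathrm{M}\geq\theta_0$ for non-reciprocal algebraic integers and with Landau's inequality. First I would note that $\mathrm{M}$ is multiplicative, that $\mathrm{M}(\Phi_{n_j})=1$ for each cyclotomic factor, and that $\mathrm{M}(g_j)\geq1$ for the reciprocal non-cyclotomic factors, so that these factors contribute nothing to a lower bound. Since the irreducible factors appearing in \eqref{numeratParryPol} are all monic, each is the minimal polynomial of its roots; moreover $n_\beta^*$ is monic with only nonzero roots ($\beta$ together with the Galois- and beta-conjugates), so $n_\beta^*(0)\neq0$ and no factor equals $X$. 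Hence each $\kappa_j$ is a genuinely non-reciprocal monic integer polynomial with nonzero constant term, and Smyth's theorem \cite{smyth} applies to give $\mathrm{M}(\kappa_j)\geq\theta_0$.

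Next I would treat the factor $P_\beta$ according to its reciprocity. If $P_\beta$ is non-reciprocal (so $\gamma=1$), Smyth's theorem applies to it as well, giving $\mathrm{M}(P_\beta)\geq\theta_0$; if $P_\beta$ is reciprocal (so $\gamma=0$), I use only the trivial bound $\mathrm{M}(P_\beta)\geq1$ (indeed $\mathrm{M}(P_\beta)\geq\beta>1$). Multiplicativity then yields
$$\mathrm{M}(n_\beta^*)=\mathrm{M}(P_\beta)\prod_{j=1}^{s}\mathrm{M}(\Phi_{n_j})^{c_j}\prod_{j=1}^{q}\mathrm{M}(\kappa_j)^{\gamma_j}\prod_{j=1}^{u}\mathrm{M}(g_j)^{\delta_j}\;\geq\;\theta_0^{\,\gamma+\sum_{j=1}^{q}\gamma_j},$$
so that the exponent $\gamma+\sum_{j=1}^{q}\gamma_j$, which is precisely the number of non-reciprocal irreducible factors of $n_\beta^*$ counted with multiplicity, is controlled by $\mathrm{M}(n_\beta^*)$.

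Finally I would apply Landau's inequality $\mathrm{M}(n_\beta^*)\leq\|n_\beta^*\|_2$ (already invoked in the proof of Theorem \ref{landauclustering}) and take logarithms of the chain $\theta_0^{\,\gamma+\sum\gamma_j}\leq\mathrm{M}(n_\beta^*)\leq\|n_\beta^*\|_2$, dividing by $\log\theta_0>0$. The delicate point, and the origin of the \emph{strict} inequality in \eqref{nonreciprocalineq}, is that Landau's bound is strict unless $|n_\beta^*|$ is constant on the unit circle; for a polynomial this forces the identity $n_\beta^*(z)\,\widetilde{n_\beta^*}(z)=c\,z^{d_P}$ and hence $n_\beta^*=c\,z^{d_P}$, a monomial. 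Since $n_\beta^*$ has degree $d_P\geq2$ and nonzero constant term it is not a monomial, whence $\mathrm{M}(n_\beta^*)<\|n_\beta^*\|_2$ strictly. This is where I expect the only real subtlety to lie: the strictness cannot be extracted from Smyth's step, because $\mathrm{M}(\kappa_j)=\theta_0$ is attained exactly when $\kappa_j$ is a root polynomial of $X^3-X-1$, so it must be supplied by the strict form of Landau's inequality.
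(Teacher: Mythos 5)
Your proof is correct and follows what the paper intends (the paper offers no written proof, saying only that Smyth's result ``implies easily'' the theorem): multiplicativity of the Mahler measure over the factorization \eqref{numeratParryPol}, Smyth's bound $\mathrm{M}\geq\theta_0$ for each monic non-reciprocal irreducible factor, and Landau's inequality $\mathrm{M}(n_{\beta}^{*})\leq\|n_{\beta}^{*}\|_2$. Your attention to where the \emph{strict} inequality comes from is well placed --- for $\beta=\theta_0$ itself one has $\mathrm{M}(n_{\beta}^{*})=\theta_0$ exactly, so strictness must indeed be supplied by the Landau step (or by $\mathrm{M}(P_\beta)\ge\beta>1$ in the reciprocal case) rather than by Smyth's bound.
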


Let us remark that the upper bound in \eqref{nonreciprocalineq}
for the number of non-reciprocal irreducible factors of the Parry polynomial of $\beta$
depends upon its $2$-norm,  not of the degree $d_P$,
meaning that it is strongly dependent upon the
gappiness (lacunarity) of the R\'enyi $\beta$-expansion $d_{\beta}(1)$ of unity
\cite{vergergaugry1},
a strong gappiness leading to a small number of non-reciprocal factors
in $n_{\beta}^{*}(X)$.

\begin{corollary}
\label{lacunaritynonreciprocal}
If $\beta$ is a Parry number for which 
the minimal polynomial is non-reciprocal
and $d_{\beta}(1)= 0. t_1 t_2 t_3 \ldots$, 
of preperiod length $m \geq 0$ and period length $p+1$,
satisfies (with $t_0 = -1$)
\begin{equation}
\label{lacupisotineq}
\left.
\begin{array}{ll}
\mbox{if $\beta$ is simple} & \displaystyle
\sum_{j=0}^{m} t_{j}^{2}  
\\
\mbox{if $\beta$ is non-simple} &
\displaystyle
\sum_{j=0}^{p} t_{j}^{2} + (1+t_{p+1})^2 + \sum_{j=1}^{m} (t_j - t_{p+j+1})^2   
\end{array}
\right\}
\leq \theta_{0}^{4} = 3.0794...
\end{equation}
then   
the Parry polynomial of $\beta$ has no non-reciprocal irreducible factor in it.
\end{corollary}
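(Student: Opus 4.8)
The plan is to reduce everything, via Theorem~\ref{smyththm}, to a single inequality for the $2$-norm of the Parry polynomial. I claim that the left-hand side displayed in \eqref{lacupisotineq} is exactly $\|n_{\beta}^{*}\|_2^2$, so that the hypothesis reads precisely $\|n_{\beta}^{*}\|_2 \leq \theta_0^{2}$. Granting this, Theorem~\ref{smyththm} gives
\[
\gamma + \sum_{j=1}^{q} \gamma_j ~<~ \frac{\log \|n_{\beta}^{*}\|_2}{\log \theta_0} ~\leq~ \frac{\log \theta_0^{2}}{\log \theta_0} ~=~ 2,
\]
since $\theta_0 > 1$ makes $\log \theta_0 > 0$. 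As $\gamma + \sum_{j=1}^{q} \gamma_j$ is a nonnegative integer strictly less than $2$, it equals $0$ or $1$; and because $P_{\beta}(X)$ is assumed non-reciprocal we have $\gamma = 1$, whence $\sum_{j=1}^{q} \gamma_j = 0$, i.e. $q = 0$. Thus no non-reciprocal irreducible factor other than $P_{\beta}(X)$ occurs in $n_{\beta}^{*}(X)$; equivalently, the complementary factor $n_{\beta}^{*}(X)/P_{\beta}(X)$ is a product of cyclotomic and reciprocal non-cyclotomic factors only, which is the assertion.

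So the whole matter is the identification of $\|n_{\beta}^{*}\|_2^2$ with the displayed sums, which I would carry out case by case from the explicit shapes of $n_{\beta}^{*}(X)$ recorded in Section~\ref{S3.1}. In the simple case, \eqref{simplepoly3} has coefficient vector $(1, -t_1, \ldots, -t_m)$, so $\|n_{\beta}^{*}\|_2^2 = 1 + \sum_{j=1}^{m} t_j^2 = \sum_{j=0}^{m} t_j^2$ once the convention $t_0 = -1$ is used, giving the first line of \eqref{lacupisotineq}. In the non-simple case with $m \geq 1$, reading \eqref{nonsimplepoly1} term by term (i.e. subtracting $T_{\beta}^{m}(1)$ from $T_{\beta}^{m+p+1}(1)$) one finds the coefficient $1$ in degree $m+p+1$, the coefficients $-t_i$ $(1 \leq i \leq p)$ in degrees $m+1, \ldots, m+p$, the single coefficient $-(1+t_{p+1})$ in degree $m$, and the coefficients $t_j - t_{p+j+1}$ $(1 \leq j \leq m)$ in degrees $0, \ldots, m-1$; squaring and summing, and using $t_0 = -1$, yields
\[
\|n_{\beta}^{*}\|_2^2 = \sum_{j=0}^{p} t_j^2 + (1+t_{p+1})^2 + \sum_{j=1}^{m} (t_j - t_{p+j+1})^2,
\]
which is the second line of \eqref{lacupisotineq}. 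The pure-periodic case $m=0$ is the specialisation of this to \eqref{nonsimplepoly2}, where the last sum is empty.

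The only point requiring care is the index bookkeeping in the non-simple case, where the two polynomials $T_{\beta}^{m+p+1}(1)$ and $T_{\beta}^{m}(1)$ overlap in degrees $0, \ldots, m$, producing the mixed coefficients $t_j - t_{p+j+1}$ together with the term $-(1+t_{p+1})$ in degree $m$; since no cancellation lowers the top degree, $d_P = m+p+1$ and the sum of squares is as stated. Conceptually the decisive step is the passage from the \emph{strict} bound $\gamma + \sum_{j=1}^{q} \gamma_j < 2$ to the integer conclusion $\leq 1$, combined with the hypothesis fixing $\gamma = 1$. As a sharper by-product I would also note that, the coefficients being integers, $\|n_{\beta}^{*}\|_2^2$ is a positive integer, so the hypothesis $\|n_{\beta}^{*}\|_2^2 \leq \theta_0^{4} = 3.0794\ldots$ is in fact equivalent to $\|n_{\beta}^{*}\|_2^2 \leq 3$, and the comparison with $\theta_0^{2}$ is then automatically strict.
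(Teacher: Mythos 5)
Your proof is correct and follows essentially the same route as the paper: identify the displayed sums with $\|n_{\beta}^{*}\|_2^2$ via the explicit shapes \eqref{nonsimplepoly1}--\eqref{simplepoly3}, translate the hypothesis into $\log\|n_{\beta}^{*}\|_2/\log\theta_0 \leq 2$, and combine the strict inequality of Theorem~\ref{smyththm} with $\gamma=1$ to force $\sum_{j=1}^{q}\gamma_j=0$. Your explicit coefficient bookkeeping and the remark that integrality reduces the hypothesis to $\|n_{\beta}^{*}\|_2^2\leq 3$ are sound additions, but the argument is the paper's.
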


\begin{proof}
Indeed, in this case, $\gamma = 1$; 
\eqref{lacupisotineq} is equivalent to
$$
\frac{\log \|n_{\beta}^{*}\|_2}{\log \theta_0} \leq 2
\quad \Leftrightarrow \quad \|n_{\beta}^{*}\|_{2}^{2} ~\leq~ \theta_{0}^{4}$$
since
$\|n_{\beta}^{*}\|_{2}^{2}$ is
given by
\eqref{nonsimplepoly1}, \eqref{nonsimplepoly2}
and \eqref{simplepoly3}.
Therefore, from \eqref{nonreciprocalineq}, we deduce 
$\sum_{j=1}^{q} \gamma_j = 0$ what implies
the claim.
\end{proof}

Let us make explicit 
\eqref{lacupisotineq}
in the ``simple" case.  
We obtain Parry numbers 
$\beta$ for which
$d_{\beta}(1)$ has necessarily the form
$$d_{\beta}(1) = 0 . 1 \underbrace{0 0 \ldots 0}_{\delta} 1$$
for some integer $\delta \geq 0$, i.e. $\beta = \beta_{\delta}$ is the 
dominant root of
the polynomial $X^{\delta+2} - X^{\delta+1} -1$. The
algebraic integers $(\beta_{\delta})_{\delta \geq 3}$ are Perron numbers studied
by Selmer \cite{vergergaugry2}. 
The case $\delta = 0$ corresponds to
the golden mean $\tau = (1+\sqrt{5})/2$
since $d_{\tau}(1)=0.11$.  

\vspace{0.5cm}

The special sequence $(\Phi_{n_j})_{j=1,\ldots,s}$ 
of cyclotomic polynomials in the factorization of
$n_{\beta}^{*}(X)$ 
is such that $\sum_{j=1}^{s} c_j \varphi(n_j) \leq d_P - d$, with
$s \leq n_{s}$, where $\varphi(n)$ is the Euler function,
and its determination is complemented by
the remarkable Theorem 3 of Schinzel \cite{schinzel}  
which readily leads to

\begin{theorem}
\label{schinzelthm}
There exists an absolute constant $C_0 > 0$ such that, 
for every Parry number $\beta$, the 
number $s$ of distinct cyclotomic irreducible factors of the Parry polynomial
of $\beta$ satisfies
\begin{equation}
\label{cyclotomicineq}
s ~\leq~ C_0 \sqrt{d_P}.  
\end{equation}
\end{theorem}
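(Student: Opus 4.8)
The plan is to reduce the statement to a purely arithmetic counting problem about the Euler totient function, and then to invoke the classical density estimate for its level sets, which is the arithmetic content underlying Schinzel's Theorem 3.

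First I would record the degree constraint already observed just before the statement. Writing the distinct cyclotomic factors as $\Phi_{n_1}, \ldots, \Phi_{n_s}$ with $n_1 < n_2 < \cdots < n_s$, and using $\deg \Phi_{n_j} = \varphi(n_j)$ together with the fact that the cyclotomic part is only one of the factors appearing in \eqref{numeratParryPol} (the minimal polynomial $P_{\beta}(X)$ of degree $d$ being another), we have
\[
\sum_{j=1}^{s} \varphi(n_j) \;\le\; \sum_{j=1}^{s} c_j\,\varphi(n_j) \;\le\; d_P - d \;\le\; d_P ,
\]
since each multiplicity $c_j \ge 1$. Thus the entire problem becomes: how many \emph{distinct} positive integers $n_1 < \cdots < n_s$ can satisfy $\sum_j \varphi(n_j) \le d_P$?

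The key input — this is precisely what I would extract from Schinzel's Theorem 3 — is the classical fact that the totient level sets are thin: there is an absolute constant $C_1 > 0$ with
\[
\#\{\, n \in \mathbb{N} \mid \varphi(n) \le T \,\} \;\le\; C_1\, T \qquad \text{for all } T \ge 1 .
\]
Granting this, I would split the factors at a threshold $T$ to be chosen. On one hand, the number of indices $j$ with $\varphi(n_j) \le T$ is at most $C_1 T$, because the $n_j$ are distinct. On the other hand, each index $j$ with $\varphi(n_j) > T$ contributes more than $T$ to the sum $\sum_j \varphi(n_j) \le d_P$, so there are fewer than $d_P/T$ of them. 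Hence
\[
s \;\le\; C_1\, T + \frac{d_P}{T}.
\]
Choosing $T = \sqrt{d_P/C_1}$ balances the two terms and yields $s \le 2\sqrt{C_1}\,\sqrt{d_P}$, which is \eqref{cyclotomicineq} with $C_0 = 2\sqrt{C_1}$.

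The main obstacle is entirely the density estimate $\#\{n : \varphi(n) \le T\} = O(T)$; everything after it is a one-line optimization. It is worth stressing why the sharp linear bound is needed: the crude elementary lower bound $\varphi(n) \gg n/\log\log n$ only gives $O(T\log\log T)$ for the level-set count, which would cost an extra $\sqrt{\log\log d_P}$ factor and \emph{not} produce the clean $\sqrt{d_P}$; it is exactly the linear density that removes this loss. Once Schinzel's Theorem 3 is quoted — either in the level-set form above, or directly as a bound on the number of distinct cyclotomic factors of a degree-$N$ polynomial by $C\sqrt{N}$ — the deduction is immediate. The only point requiring care is to count \emph{distinct} cyclotomic factors, so that the indices $n_j$ are genuinely distinct integers and the level-set bound applies, rather than factors counted with multiplicity.
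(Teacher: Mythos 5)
Your argument is correct and amounts to the paper's own approach: the paper gives no proof beyond recording the inequality $\sum_{j=1}^{s} c_j\varphi(n_j)\le d_P-d$ and appealing to Schinzel's Theorem~3, and what you have written is exactly the standard proof of that cited black box, resting on the linear density $\#\{n:\varphi(n)\le T\}\le C_1 T$ of the totient level sets (whose density constant $\alpha(\mathbb{Q})=\zeta(2)\zeta(3)/\zeta(6)$ the paper itself quotes in Section~\ref{S3.3}). The only cosmetic point is that your choice $T=\sqrt{d_P/C_1}$ presupposes $T\ge 1$, i.e.\ $d_P\ge C_1$; the remaining case is trivial since $s\le\sum_j\varphi(n_j)\le d_P\le\sqrt{C_1}\sqrt{d_P}$ there.
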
 

Concerning the non-cyclotomic factors of $n_{\beta}^{*}(X)$, the 
remarkable Theorem 2 of Dobrowolski \cite{dobrowolski}
implies 

\begin{theorem}
\label{dobrowolskithm}
There exists an absolute  constant $C_1 > 0$ such that
for every Parry number $\beta$ and $\epsilon > 0$ an arbitrary positive real number, then
\begin{equation}
\label{noncyclotomicineq}
1+ \sum_{j=1}^{q} \gamma_j + \sum_{j=1}^{u} \delta_j
~\leq ~ C_1 \, \Bigl(
\, (\frac{d_P}{\log \|n_{\beta}^{*}\|_{2}^{2}})^{\epsilon} \times \log \|n_{\beta}^{*}\|_{2}
\, \Bigr).
\end{equation}
\end{theorem}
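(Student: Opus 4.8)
The plan is to read the left-hand side, via the table preceding the statement, as the number $N := 1+\sum_{j=1}^{q}\gamma_j+\sum_{j=1}^{u}\delta_j$ of non-cyclotomic irreducible factors of $n_\beta^*(X)$ counted with multiplicity, and to control it through the Mahler measure. Starting from the factorization \eqref{numeratParryPol}, I would use that the Mahler measure is multiplicative and that every cyclotomic factor $\Phi_{n_j}$ has $M(\Phi_{n_j})=1$, so that the cyclotomic part drops out entirely:
$$\log M(n_\beta^*) = \log M(P_\beta) + \sum_{j=1}^q \gamma_j \log M(\kappa_j) + \sum_{j=1}^u \delta_j \log M(g_j).$$
Here $P_\beta$ is itself non-cyclotomic, since $\beta>1$ is real, so it lies in the non-cyclotomic part and accounts for the leading $1$ in $N$ (with multiplicity one, as $\beta$ is a simple root of $n_\beta^*$). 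Landau's inequality $M(n_\beta^*)\le\|n_\beta^*\|_2$, already invoked in the proof of Theorem \ref{landauclustering}, then bounds the left-hand side above by $L:=\log\|n_\beta^*\|_2$.

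Next I would feed in the arithmetic input. Listing the non-cyclotomic factors with multiplicity as $h_1,\dots,h_N$ (a factor of multiplicity $\mu$ appearing $\mu$ times, contributing $\mu\log M(h)$ to the sum and $\mu\deg h$ to the degree), Dobrowolski's lower bound gives, for an absolute constant $c>0$, $\log M(h_i)\ge c\bigl(\log\log d_i/\log d_i\bigr)^3=:\delta(d_i)$ once the degree $d_i$ is large, while factors of small degree contribute a bounded-below amount (a non-cyclotomic algebraic integer of degree $1$ or $2$ has $\log M\ge$ a fixed positive constant, and $X\nmid n_\beta^*$ since its constant term is nonzero). Combining with the previous display yields the two constraints $\sum_{i=1}^N \delta(d_i)\le L$ and $\sum_{i=1}^N d_i\le d_P$, where $\delta$ is Dobrowolski's function, which decreases for large argument.

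The conclusion is then a counting optimisation. Splitting the list at a degree threshold $D$, the number of factors with $d_i>D$ is at most $d_P/D$ by the degree constraint, while the number with $d_i\le D$ is at most $L/\delta(D)$ since $\delta$ is decreasing; optimising $N\le d_P/D+L/\delta(D)$ over $D$ (the balance point being $D\approx (d_P/L)(\log(d_P/L))^{-3}$) produces a bound of the shape $N\ll L\,(\log(d_P/L))^{3}$ up to $\log\log$ factors. Finally I would absorb the cube of the logarithm into an arbitrarily small power through $(\log x)^3\le C_\epsilon\,x^{\epsilon}$, which turns this into $N\le C_1\,(d_P/\log\|n_\beta^*\|_2^2)^{\epsilon}\log\|n_\beta^*\|_2$; the factor $2$ between $\|n_\beta^*\|_2$ and $\|n_\beta^*\|_2^2$ and all absolute constants are swallowed by $C_1$, which must be allowed to depend on $\epsilon$. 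Equivalently, one may simply quote Theorem 2 of Dobrowolski \cite{dobrowolski}, which packages exactly this factor-counting estimate, and substitute Landau's inequality. The main obstacle is this last optimisation: handling factors of widely differing degrees, checking that the extremal configuration is genuinely the "all equal degree" one, and verifying that the unavoidable cube-of-a-logarithm is absorbed into $(\cdot)^\epsilon$ at the cost of an $\epsilon$-dependent constant.
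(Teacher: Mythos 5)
Your proposal is correct and follows essentially the same route as the paper, which simply invokes Dobrowolski's Theorem 2 together with Landau's inequality $M(n_{\beta}^{*})\leq\|n_{\beta}^{*}\|_{2}$ applied to the factorization \eqref{numeratParryPol}; your reconstruction of the underlying degree-splitting optimisation is a faithful unpacking of that cited theorem, and your remark that $C_1$ must depend on $\epsilon$ is a fair reading of the statement. Nothing further is needed.
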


Both constants $C_0 , C_1$ in \eqref{cyclotomicineq} and
in \eqref{noncyclotomicineq} are specific 
to the whole collection of Parry polynomials
$\mathcal{P}\mathcal{P}$; they are almost surely different from       
the constants relative to general polynomials with integral coefficients
as in \cite{schinzel} and \cite{dobrowolski}
and are expected to be computable when 
$\mathcal{P}\mathcal{P}$ will be characterized.
  
\subsection{Cyclotomic factors}
\label{S3.3}

This section is relative to 
the beta-conjugates $\chi$ 
of $\beta$ for which the minimal polynomial of $\chi$ is cyclotomic.
The M\"obius function $\mu$ is
given by, for $n \geq 1$,
$$\mu(n) ~=
\left\{
\begin{array}{cc}
0 & \mbox{if $n$ is divisible by a square,}\\
(-1)^{k} & \mbox{if $n$ is the product of $k$ prime numbers.}
\end{array}
\right.
$$
For $n \geq 1$, 
$$X^n - 1 = \prod_{j=1}^{n} \Phi_{j}(X).$$
By the M\"obius inversion formula,
the $n$-th cyclotomic polynomial is
$$\Phi_{n}(X) := \prod_{j=1}^{n} (X^d - 1)^{\mu(n/j)}.$$  

Amoroso in \cite{amoroso1} proves the remarkable result
that the 
assertion that the Riemann zeta function does not vanish for
Re$ z \geq \sigma + \epsilon$ is equivalent to the
inequality $\tilde{h}\Bigl(\prod_{n=1}^{N} \Phi_{n}
\Bigr)~\ll~N^{\sigma +\epsilon}$, 
where $\sigma$ is the supremum of the real parts
of the non-trivial zeros of the Riemann zeta function, 
and where $\sigma = 1/2$ if the Riemann hypothesis (R.H.) 
is true. Here we  
reconsider the same arguments to adapt them to the particular 
products of cyclotomic
polynomials which appear in the factorization of the 
Parry polynomial
$n_{\beta}^{*}(X)$ as in \eqref{numeratParryPol}. 

\begin{theorem}
\label{cyclothm}
Let $s \geq 1$. Let  
$\tilde{c} = (c_j)_{j = 1, \ldots, s}$
be a collection of positive integers
and $n_1 \leq n_2 \leq \ldots \leq n_s$ be 
an increasing sequence of positive integers.
Let $\tilde{n} = (n_j)_{j=1,\ldots,s}$.
Assume that the Riemann hypothesis is true.
Then
there exists a linear form
$L_{\tilde{n}}(\tilde{c})$ with rational coefficients
such that
\begin{equation}
\label{contribcyclopoly}
\tilde{h}\Bigl(\, \prod_{j=1}^{s} \, \Phi_{n_j}(X)^{c_j}\Bigr) 
~\ll_{\epsilon}~ \Bigl|L_{\tilde{n}}(\tilde{c})\Bigr| \, 
n_s^{\mbox{}~\frac{1}{2} + \epsilon}.   
\end{equation}
\end{theorem}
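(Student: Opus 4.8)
The plan is to reduce the estimate to Amoroso's bound for the \emph{full} products $Q_N := \prod_{n=1}^{N}\Phi_n$, which by the cited equivalence, under the Riemann hypothesis ($\sigma=\tfrac12$), reads $\tilde{h}(Q_N)\ll_{\epsilon} N^{1/2+\epsilon}$. First I would remove the radial operator: since each $\Phi_{n_j}$ has all its zeros on the unit circle, it is fixed by $^{(r)}$, and by multiplicativity of $^{(r)}$ one has $\bigl(\prod_{j}\Phi_{n_j}^{c_j}\bigr)^{(r)}=\prod_{j}\Phi_{n_j}^{c_j}$. Hence by \eqref{tildedis} the quantity to estimate is simply
\[
\tilde{h}\Bigl(\prod_{j=1}^{s}\Phi_{n_j}^{c_j}\Bigr)=\frac{1}{2\pi}\int_{0}^{2\pi}{\rm Log}^{+}\Bigl|\prod_{j=1}^{s}\Phi_{n_j}(e^{i\theta})^{c_j}\Bigr|\,d\theta,
\]
with no radial correction.

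Next I would pass from the weighted partial product to the full products $Q_N$ by summation by parts. Set $g(n)=c_j$ if $n=n_j$ and $g(n)=0$ otherwise, so that the product above is $\prod_{n=1}^{n_s}\Phi_n^{g(n)}$. Using $\Phi_n=Q_nQ_{n-1}^{-1}$ (with $Q_0=1$) and Abel summation, I obtain
\[
\prod_{n=1}^{n_s}\Phi_n^{g(n)}=\prod_{N=1}^{n_s}Q_N^{\,b_N},\qquad b_N:=g(N)-g(N+1),
\]
where the $b_N$ are supported on the indices adjacent to the $n_j$ and satisfy $\sum_N|b_N|\le 2\sum_j c_j$.

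The exponents $b_N$ are in general \emph{negative}, so one cannot apply subadditivity of ${\rm Log}^{+}$ to $Q_N^{b_N}$ directly. The key device is that each $Q_N$ is monic with all zeros on the unit circle, so $M(Q_N)=1$ and Jensen's formula gives $\frac{1}{2\pi}\int_{0}^{2\pi}{\rm Log}\,|Q_N(e^{i\theta})|\,d\theta=0$; thus the integrals of the positive and negative parts of ${\rm Log}\,|Q_N|$ coincide, whence $\frac{1}{2\pi}\int_{0}^{2\pi}{\rm Log}^{+}|Q_N(e^{i\theta})^{-1}|\,d\theta=\tilde{h}(Q_N)$. Subadditivity of ${\rm Log}^{+}$ over the product then yields
\[
\tilde{h}\Bigl(\prod_{j=1}^{s}\Phi_{n_j}^{c_j}\Bigr)\le\sum_{N}|b_N|\,\tilde{h}(Q_N)\le 2\Bigl(\sum_{j=1}^{s}c_j\Bigr)\max_{N\le n_s}\tilde{h}(Q_N).
\]
Inserting Amoroso's bound and $N\le n_s$ gives the assertion with the linear form $L_{\tilde n}(\tilde c)=\sum_{j=1}^{s}c_j$, which has integer (hence rational) coefficients; a sharper $\tilde n$-dependent form can be obtained by re-running Amoroso's computation directly on $\prod_j\Phi_{n_j}^{c_j}$ via the Fourier expansion ${\rm Log}\,|\Phi_n(e^{i\theta})|=-\sum_{m\ge1}c_n(m)\,m^{-1}\cos m\theta$, with $c_n(m)$ the Ramanujan sum.

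The main obstacle is exactly the treatment of the negative exponents $b_N$: it is the Mahler-measure-one property of products of cyclotomic polynomials that equates $\int{\rm Log}^{+}|Q_N|$ with $\int{\rm Log}^{-}|Q_N|$ and lets subadditivity absorb the inverted factors; without it, the telescoping would leave ${\rm Log}^{-}$ contributions that cannot be controlled. The deeper difficulty is packaged inside Amoroso's cited equivalence itself, namely that it is the integral of ${\rm Log}^{+}$ — and not of ${\rm Log}$, which vanishes here — that couples to the real parts of the non-trivial zeros of $\zeta$, so that the exponent $\tfrac12$ is precisely the input supplied by the Riemann hypothesis.
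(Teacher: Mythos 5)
Your proof is correct, but it follows a genuinely different route from the paper's. The paper applies Amoroso's Theorem 4.2 of \cite{amoroso2} directly to the weighted product $G=\prod_{n=1}^{N}\Phi_n^{\sigma_n}$ (with $\sigma_{n_j}=c_j$, $\sigma_n=0$ otherwise), obtaining an explicit quadratic-form bound
$\tilde h(G)^2\le\frac{\pi}{12}\sum_{m\le N}\bigl(\sum_{j|m}\mu(j)j^{-2}\bigr)\bigl(\sum_{n\le N/m}\sigma_{mn}\sum_{k|n}\mu(k)k/n\bigr)^2$;
it then packages the inner sums as linear forms $L_{m}(\tilde c)$ in the $c_j$, invokes Titchmarsh's equivalence of R.H. with $\sum_{k\le x}\mu(k)\ll x^{1/2+\epsilon}$, and sums the resulting series $\sum_m (N/m)^{1+2\epsilon}$. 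This produces a linear form $L_{\tilde n}(\tilde c)$ that genuinely depends on $\tilde n$ through the M\"obius structure. You instead treat the $\tilde h$-bound for the \emph{unweighted} products $Q_N=\prod_{n\le N}\Phi_n$ (the form of Amoroso's equivalence quoted at the start of Section \ref{S3.3}) as a black box, and reduce to it by Abel summation; your essential technical contribution is the observation that ${\rm M}(Q_N)=1$ forces $\int\mathrm{Log}^{+}|Q_N|=\int\mathrm{Log}^{+}|Q_N^{-1}|$ on the circle, which lets subadditivity of $\mathrm{Log}^{+}$ absorb the negative telescoping exponents $b_N=g(N)-g(N+1)$. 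The trade-off: your argument is shorter and more modular, but delivers only the crude form $L_{\tilde n}(\tilde c)=\sum_j c_j$ (with a factor $2$), whereas the paper's computation keeps the finer arithmetic information in the coefficients of $L_{\tilde n}$; since the theorem only asserts the existence of \emph{some} rational linear form, your conclusion is nonetheless a valid proof of the statement as written.
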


\begin{proof}
Let $N = n_s$. Let
$$G(X) = \prod_{n=1}^{N} \Phi_{n}(x)^{\sigma_{n}}$$
with
$$\sigma_n ~=~
\left\{
\begin{array}{ll}
0 & \mbox{if}~ n \not\in \{n_1, n_2, \ldots, n_s\}\\
c_j & \mbox{if}~ n= n_j ~\mbox{for}~ j \in \{1, 2, \ldots, s\}
\end{array}
\right.
$$
for $n \geq 0$.
The remarkable Theorem 4.2 of Amoroso in \cite{amoroso2} implies
\begin{equation}
\label{hprodCYCLO}
\tilde{h}(G) ~\leq~
\sqrt{\frac{\pi}{12} \sum_{m=1}^{N} \, \left( \sum_{j | m} \frac{\mu(j)}{j^2}\right) 
\left(\sum_{n \leq N/m} \, \sigma_{m n} \sum_{ k | n} \frac{\mu(k) k}{n}\right)^2}
\end{equation}

Since $0 \leq \sum_{j | m} \frac{\mu(j)}{j^2} \leq 1$
and that
$$
\sum_{n \leq N/m} \, \sigma_{m n} \sum_{ k | n} \frac{\mu(k) k}{n}
\quad \mbox{can be written} \quad
\sum_{k=1}^{N/m} L_{k,m}(\tilde{c}) \mu(k)
$$
where $(L_{k,m}(\tilde{c}))$ are a family of linear forms with
rational coefficients,
we deduce
$$
\sum_{m=1}^{N} \, \left( \sum_{j | m} \frac{\mu(j)}{j^2}\right)
\left(\sum_{n \leq N/m} \, \sigma_{m n} \sum_{ k | n} \frac{\mu(k) k}{n}\right)^2
~\leq~
\sum_{m=1}^{N} \left|L_{m}(\tilde{c})\right|^2
\left(\sum_{k=1}^{N/m} \mu(k)\right)^2$$
for some linear forms 
$L_{m}(\tilde{c})$ deduced from  the family
$(L_{k,m}(\tilde{c}))$ according to the signs of
$\mu(k)$. Let us take 
$L_{\tilde{n}}(\tilde{c})$ such that
$\left|L_{\tilde{n}}(\tilde{c})\right| = \sup_{m=1, \ldots,N} 
\left|L_{m}(\tilde{c})\right|$.  
The Riemann hypothesis is equivalent
(Titchmarsh \cite{titchmarsh} 14.25C) to
$$\sum_{k \leq x} \mu(k) ~\ll~ x^{\frac{1}{2} + \epsilon} ~~\mbox{for all real $x$ and all
$\epsilon > 0$}.$$
Then there exists a constant $A > 0$ such that, for $\epsilon > 0$,  
$$\tilde{h}(G)^2 ~\leq~ 
\frac{\pi}{12} \, A \, \sum_{m=1}^{N} \left(\frac{N}{m}\right)^{2 \, (\frac{1}{2}+\epsilon)}
\times
\left|L_{\tilde{n}}(\tilde{c})\right|^{2} ~\leq~ 
\frac{\pi}{12} \, A \, 
\left|L_{\tilde{n}}(\tilde{c})\right|^{2} \, N^{1 + 2 \epsilon} \,
\sum_{m=1}^{\infty} \frac{1}{m^{1 + 2 \epsilon}}.$$
We deduce \eqref{contribcyclopoly}.  
\end{proof}

Let us turn to making explicit
upper bounds of the multiplicities of the primitive 
roots of unity involved in the product
$\prod_{j=1}^{s} \, \Phi_{n_j}(X)^{c_j}$. 
Let $n \geq 1$ and $\zeta_n$ be a primitive $n$th-root of unity.
Let us write 
the factorization 
of $\Phi_{n}(X)$
in $\mathbb{Q}(\zeta_n)[X]$
as $$\Phi_{n}(X) = \prod_{m=1}^{\varphi(n)} \Phi_{n,m}(X).$$   
The polynomial $\Phi_{n,m}(X)$ is $X - \xi_m$ for some
primitive $n$th-root $\xi_m$ of unity.
Then
\begin{equation}
\label{producyclo}
\prod_{j=1}^{s} \, \Phi_{n_j}(X)^{c_j}
~=~
\prod_{n=1}^{\infty} \prod_{m=1}^{\varphi(n)} \Phi_{n,m}(X)^{e(n,m)}
\end{equation}
for some integers $e(n,m) \geq 0$. 
The total number of cyclotomic factors of
the Parry polynomial
$n_{\beta}^{*}(X)$ is then
$$\sum_{j=1}^{s} c_j \varphi(n_j) ~=~
\sum_{n=1}^{\infty} \sum_{m=1}^{\varphi(n)} e(n,m).$$

Let $$\alpha(\mathbb{Q}) := 
\lim_{X \to \infty} \frac{1}{X} \sum_{\stackrel{n=1}{\varphi(n) \leq X}}^{+\infty} \varphi(n).$$  
We have \cite{pinnervaaler1}: $\alpha(\mathbb{Q}) =
\frac{\zeta(2) \zeta(3)}{\zeta(6)}$ 
where $\zeta(z)$ is the Riemann zeta function.

At each place $v$ of the number field $\mathbb{K}$ 
we write $\mathbb{K}_v$ for
the completion of  $\mathbb{K}$ at $v$, 
$\overline{\mathbb{K}}$
for an algebraic closure of $\mathbb{K}_v$ and
$\Omega_v$ for the completion of $\mathbb{K}_v$.
The field $\Omega_v$ is complete as a metric space and
algebraically closed.
Two absolute values $|~|_v$ and $\|~\|_v$ are introduced on 
$\Omega_v$. If $v | \infty$ then $\|~\|_v$ restricted to
$\mathbb{Q}$ is the usual Archimedean absolute value. If
$p$ is a prime number and $v|p$ then
$\|~\|_v$ restricted to
$\mathbb{Q}$ is the usual $p$-adic value.
They are related by
$$|~|_v := \|~\|_{v}^{[\mathbb{K}_v : \mathbb{Q}_v ]/[ \mathbb{K} : \mathbb{Q} ]}.
$$
Now let $F(X) \in \mathbb{Q}[X]$ and introduce 
the global measure of $F$ as
$$\nu(F) := \prod_v \, \nu_{v}(F)$$ 
where
$$\nu_{v}(F) := \sup\{|F(z)|_v ~\mid~ z \in \Omega_v ~\mbox{and}~ |z|_v =1 \}.$$ 
We can now introduce
$$\mathcal{R} := 
\max\left\{
\frac{d_P}{{\rm Log} \, \nu(n_{\beta}^{*})}, \, 3 \right\}.$$
This quantity plays an important role in the multiplicites of
the cyclotomic factors by Theorem \ref{pinnervaalerthm1}. 
Its inverse $\mathcal{R}^{-1}$ is surprisingly
deeply correlated
to the convergence condition  
\eqref{seqinfinidegrees} relative to convergent
sequences of Parry numbers by the following
inequalities (Pinner and Vaaler \cite{pinnervaaler1}, Lemma 2):
\begin{equation}
{\rm Log \, H}(n_{\beta}^{*}) ~\leq~
{\rm Log} \, \nu(n_{\beta}^{*})  
~\leq~ 
2 {\rm Log \, H}(n_{\beta}^{*}). 
\end{equation} 
Indeed, since H$(n_{\beta}^{*}) \in \{\lfloor \beta \rfloor, \lceil \beta \rceil\}$
we have the following estimate of the global measure of
the Parry polynomial of $\beta$:
$$\nu(n_{\beta}^{*}) ~\in~ [\lfloor \beta \rfloor, \lceil \beta \rceil^2 ]$$
and $\mathcal{R}^{-1}$ is roughly equal to Log$\, \beta/d_P$
when $d_P$ is large enough compared to Log H$(n_{\beta}^{*})$.

The remarkable Theorem 1 of Pinner and Vaaler \cite{pinnervaaler1} 
gives a system of four inequalities  
for sums containing the multiplicities
$e(n,m)$, which readily leads to

\begin{theorem}
\label{pinnervaalerthm1}
Let $\beta$ be a Parry number for which the Parry polynomial 
$n_{\beta}^{*}(X)$
factors into irreducible polynomials in 
$\mathbb{Q}[X]$ as in \eqref{numeratParryPol} and \eqref{producyclo}.
Then
\begin{itemize}
\item[(i)] for every $\epsilon > 0$ and 
$\mathcal{R} ~\geq~ \mathcal{R}_{0}(\epsilon)$,
\begin{equation}
\sum_{n=1}^{\infty} \frac{1}{\varphi(n)} \sum_{m=1}^{\varphi(n)} e(n,m)
~\leq~ (1+\epsilon) \, d_P \, \left(
\frac{\alpha(\mathbb{Q}) \, {\rm Log} \mathcal{R}}{\mathcal{R}}\right)^{1/2}, 
\end{equation}
\item[(ii)] for every $\epsilon > 0$ and
$\mathcal{R} ~\geq~ \mathcal{R}_{1}(\epsilon)$, 
\begin{equation}
\sum_{n=1}^{\infty} \sum_{m=1}^{\varphi(n)} e(n,m)
~\leq~ (1+\epsilon) \, d_P \, \left(
\frac{\alpha(\mathbb{Q}) \, {\rm Log} \mathcal{R}}{\mathcal{R}}\right)^{1/2},
\end{equation}
\item[(iii)] for each positive integer $n \leq \mathcal{R}$,  
\begin{equation}
\sum_{m | n} \sum_{h=1}^{\varphi(m)} e(m,h)
~\ll~
d_P \, \left( \frac{n}{\mathcal{R}} \right)^{1/2} 
\end{equation}
\item[(iv)]
for each integer $n$ such that ~$\varphi(n)\leq \mathcal{R}$,
\begin{equation}
\sum_{m=1}^{\varphi(n)} e(n,m) ~\ll~ d_P \, 
\left(\frac{\varphi(n)}{\mathcal{R}}\right)^{1/2}  
\left\{
1 + \left(
\frac{{\rm Log}{\rm Log} 20 n}{{\rm Log}\left(
\frac{\mathcal{R} \, {\rm Log}{\rm Log} 20 n}{\varphi(n)}\right)}
\right)^{1/2}
\right\}
\end{equation}
\end{itemize}
\end{theorem}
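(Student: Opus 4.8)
The plan is to deduce all four inequalities directly from Theorem~1 of Pinner and Vaaler \cite{pinnervaaler1}, applied to the single polynomial $F(X) = n_{\beta}^{*}(X)$. Their theorem is stated for an arbitrary $F \in \Q[X]$ and bounds the multiplicities with which roots of unity occur as roots of $F$, the bounds being governed by the degree of $F$ and its global measure $\nu(F)$. Since the Parry polynomial $n_{\beta}^{*}(X)$ is a genuine element of $\Q[X]$ (indeed of $\Z[X]$) of degree $d_P$, their hypotheses are met, and I expect that nothing beyond a careful translation of notation will be required.

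First I would set up the dictionary between the two statements. By \eqref{producyclo}, the integers $e(n,m)$ are exactly the exponents with which the linear factors $\Phi_{n,m}(X) = X - \xi_m$, one for each primitive $n$th root of unity, occur in the cyclotomic part $\prod_{j=1}^{s}\Phi_{n_j}(X)^{c_j}$ of $n_{\beta}^{*}(X)$. Because the remaining factors $P_{\beta}$, $\kappa_j$ and $g_j$ in \eqref{numeratParryPol} are non-cyclotomic (the $\kappa_j$ being non-reciprocal and the $g_j$ reciprocal but non-cyclotomic), no root of unity is hidden among them, so the $e(n,m)$ are simultaneously the multiplicities of the roots of unity as roots of $n_{\beta}^{*}$ itself. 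The parameter governing the Pinner--Vaaler estimates is precisely the quantity $\mathcal{R} = \max\{ d_P / {\rm Log}\,\nu(n_{\beta}^{*}),\, 3\}$ introduced above, and $\alpha(\Q) = \zeta(2)\zeta(3)/\zeta(6)$ is the Euler-function density constant entering their main terms. With these identifications, (i)--(iv) should read off verbatim as the four assertions of their Theorem~1 for $F = n_{\beta}^{*}$.

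The one point needing genuine verification is that the auxiliary quantities match. I would check that the global measure $\nu$ defined here through the places $v$ of $\Q$, with its two normalizations $|\cdot|_v$ and $\|\cdot\|_v$, coincides with the measure used by Pinner and Vaaler, so that their $\mathcal{R}$ and ours are literally the same object and the thresholds $\mathcal{R}_{0}(\epsilon)$ and $\mathcal{R}_{1}(\epsilon)$ above which the sharp $(1+\epsilon)$ constants hold are inherited unchanged. For the Parry polynomial this is harmless: by the estimate $\nu(n_{\beta}^{*}) \in [\lfloor \beta\rfloor, \lceil \beta\rceil^2]$ recorded above, $\mathcal{R}$ is finite, and for sequences obeying the clustering condition \eqref{seqinfinidegrees} it grows like $d_P/{\rm Log}\,\beta \to \infty$, so the hypotheses $\mathcal{R} \geq \mathcal{R}_{0}(\epsilon)$ and $\mathcal{R} \geq \mathcal{R}_{1}(\epsilon)$ will eventually be satisfied.

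I expect the main --- indeed the only --- obstacle to be bookkeeping rather than mathematics: ensuring that the $\varphi(n)$-weighted sum in (i), the unweighted sum in (ii), the divisor sum $\sum_{m | n}$ in (iii) and the single-conductor sum in (iv) are each matched against the correct line of the four-part Pinner--Vaaler conclusion, and that their normalizations of $\nu$ and of the main term $({\rm Log}\,\mathcal{R}/\mathcal{R})^{1/2}$ agree with the ones used here. No new estimate is produced; the arithmetic content, including the role of $\alpha(\Q)$, is entirely theirs.
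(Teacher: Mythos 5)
Your proposal matches the paper's own treatment exactly: the paper offers no argument beyond the remark that ``the remarkable Theorem 1 of Pinner and Vaaler gives a system of four inequalities for sums containing the multiplicities $e(n,m)$, which readily leads to'' the statement, i.e.\ a direct application of their theorem to $F = n_{\beta}^{*}(X)$ with the same $\nu$, $\mathcal{R}$ and $\alpha(\mathbb{Q})$. Your additional checks (that the $e(n,m)$ are the full multiplicities of the roots of unity in $n_{\beta}^{*}$ and that the normalizations of $\nu$ coincide) are exactly the bookkeeping the paper leaves implicit, so the two proofs are essentially the same.
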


Let us investigate the role played by the gappiness (lacunarity) 
of the R\'enyi $\beta$-expansion of unity on the number $s$ of
distinct cyclotomic factors in $n_{\beta}^{*}(X)$.
With the notations of \eqref{producyclo} we have:
$$s = \sum_{n=1}^{\infty} \sum_{m=1}^{\varphi(n)} \min\{1, e(n,m)\}.$$  
Denote by $\tau(n)$ the number of positive divisors of $n$
and define $\pi(m) := \# \{ \mbox{prime number}~ p \mid p \leq m\}$. 
We now refer the reader to \eqref{nonsimplepoly1}, 
\eqref{nonsimplepoly2} or \eqref{simplepoly3} for the
Parry polynomial of $\beta$ as a sum of monomials. 
Define $N_{m}(n_{\beta}^{*}) := $ the number of monomials in this sum.
The remarkable Theorem 1 and Theorem 2 (i) in Pinner and Vaaler
\cite{pinnervaaler2} yield
 
\begin{theorem}
\label{pinnervaalerthmII} 
Let $\beta$ be a Parry number for which the Parry polynomial
$n_{\beta}^{*}(X)$
factors into irreducible polynomials in
$\mathbb{Q}[X]$ as in \eqref{numeratParryPol} and \eqref{producyclo}.
If $$n_{\beta}^{*}(X) = \sum_{i=1}^{N_{m}(n_{\beta}^{*})} a_{i} X^{n_i},
\qquad \mbox{with}~ a_i \neq 0, $$
then the number $s$ of distinct cyclotomic factors 
of $n_{\beta}^{*}(X)$ satisfies
\begin{equation}
\label{hh1}
(i)\quad \mbox{for every}~\epsilon > 0,
\qquad  s ~\ll_{\epsilon}~ \bigl(d_P\bigr)^{\epsilon} \, N_{m}(n_{\beta}^{*}),  
\qquad \qquad \qquad \mbox{}
\end{equation}
\begin{equation}
\label{hh2}
(ii)\quad s ~\leq~ \inf 
\left\{ 
\left(
\sum_{i=1}^{N_{m}(n_{\beta}^{*})} \tau(n_i -n_j)
\right) \, 
2^{\pi(N_{m}(n_{\beta}^{*}))}
\mid ~j \, \in \, \{1, \ldots, N_{m}(n_{\beta}^{*})\}  
\right\}.  
\end{equation}
\end{theorem}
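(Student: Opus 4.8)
The plan is to apply directly the general bounds of Pinner and Vaaler \cite{pinnervaaler2} for the number of cyclotomic factors of a lacunary integer polynomial, since for each fixed $\beta$ the Parry polynomial $n_{\beta}^{*}(X)$ is simply one such polynomial. Writing $N := N_{m}(n_{\beta}^{*})$ for brevity, the first observation I would record is that $s$ — the number of \emph{distinct} cyclotomic factors, i.e. the number of distinct orders $n$ for which some primitive $n$-th root of unity annihilates $n_{\beta}^{*}$, which by \eqref{producyclo} equals $\sum_{n}\sum_{m}\min\{1,e(n,m)\}$ — depends only on the support $\{n_1,\ldots,n_{N}\}$ of the monomials and on the coefficient pattern $(a_i)$, not on the detailed shape of the polynomial. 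This is precisely what makes the bounds insensitive to $d_P$ except through a harmless $\epsilon$-power.

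For part $(i)$ I would identify $n_{\beta}^{*}(X)=\sum_{i=1}^{N} a_i X^{n_i}$ as an integer polynomial with $N$ nonzero terms and degree $d_P$, and invoke Theorem 1 of \cite{pinnervaaler2}, which bounds the number of distinct cyclotomic factors of an $N$-term integer polynomial of degree $D$ by $\ll_{\epsilon} D^{\epsilon}\,N$. Substituting $D=d_P$ and the present $N$ gives \eqref{hh1} at once; the only point to check is that, once $\epsilon>0$ is fixed, the implied constant may be taken absolute, which is exactly the content of their statement.

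For part $(ii)$ I would appeal to Theorem 2$(i)$ of \cite{pinnervaaler2}. The mechanism is the theory of vanishing sums of roots of unity: if a root of unity $\zeta$ of order $n$ satisfies $\sum_i a_i\zeta^{n_i}=0$, the relation decomposes into minimal vanishing subsums governed by the primes $p\le N$, and $n$ is then forced to divide suitable combinations of the exponent differences $n_i-n_j$. Fixing any reference index $j\in\{1,\ldots,N\}$ and normalising by the unit $\zeta^{-n_j}$, the admissible orders $n$ lie among the divisors of the differences $n_i-n_j$, which produces the factor $\sum_{i=1}^{N}\tau(n_i-n_j)$, while the number of combinatorial types of minimal subsums assembled from the primes below $N$ is controlled by $2^{\pi(N)}$. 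Since $j$ was arbitrary, taking the infimum over the choice of $j$ yields \eqref{hh2}.

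The main obstacle is not analytic but a matter of matching definitions. I must verify that the Pinner--Vaaler counting of cyclotomic factors coincides with the count $s$ used here (distinct orders $n$, each recorded once irrespective of its multiplicity in \eqref{producyclo}), and that the normalisation by a chosen monomial $X^{n_j}$ — legitimate because a root of unity is a unit — leaves both the term count $N$ and the set of exponent differences entering $\tau(n_i-n_j)$ unchanged. Once these identifications are secured, both inequalities follow by direct substitution into the cited theorems, with no further estimation required.
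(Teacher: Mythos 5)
Your proposal is correct and follows the same route as the paper: the result there is obtained exactly by specialising Theorem 1 and Theorem 2(i) of Pinner and Vaaler \cite{pinnervaaler2} to the Parry polynomial $n_{\beta}^{*}(X)$ viewed as an $N_{m}(n_{\beta}^{*})$-term integer polynomial of degree $d_P$, which is what you do. Your additional remarks on matching the count $s$ of distinct cyclotomic factors with the Pinner--Vaaler counting and on the harmlessness of normalising by $X^{n_j}$ are sound and merely make explicit what the paper leaves implicit.
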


Theorem \ref{pinnervaalerthmII} (i) improves Theorem
\ref{schinzelthm}: it introduces in the upper bound
\eqref{hh1} the term $N_{m}(n_{\beta}^{*})$ for which the
quantity $d_P + 1 - N_{m}(n_{\beta}^{*})$ is an estimate of the gappiness
of $d_{\beta}(1)$ by
\eqref{nonsimplepoly1} and \eqref{nonsimplepoly2}, and,
by \eqref{simplepoly3}, of some possible identifications between
the digits $t_j$. 
Theorem \ref{pinnervaalerthmII} (ii) 
bears an ingredient which does not seem
to have been observed for Parry polynomials yet: 
{\em the number of monomials in
$n_{\beta}^{*}(X)$}. Indeed,
when it is small, 
the exponent $\pi(N_{m}(n_{\beta}^{*}))$ of 2
in the upper bound in \eqref{hh2} is small, and
this may imply a small number of distinct cyclotomic factors by \eqref{hh2}.  

\subsection{Non-cyclotomic factors}
\label{S3.4}  

This section deals with the beta-conjugates $\chi$ of $\beta$ 
for which the minimal polynomial (of $\chi$) is non-cyclotomic.
These minimal polynomials are
irreducible factors in the factorization of the
Parry polynomial of $\beta$: they are either a $g_{j}(X)$ or
a $\kappa_{j}(X)$ in \eqref{numeratParryPol}. In some
cases, when a beta-conjugate $\chi$ lies, together with its Galois conjugates,
very near the unit circle, then the form of the minimal polynomial 
of $\chi$ can be specified. The remarkable 
Theorems 1 and 2 in Cassels \cite{cassels} imply

\begin{theorem}
\label{casselsthm1}
If $\chi$ is a beta-conjugate
of a Parry number $\beta$ such that the minimal polynomial
$g(X)$ of $\chi$ is non-reciprocal, with
$n = \deg(g)$, if
$\chi_1, \ldots, \chi_{n - 1}$   
denote the Galois conjugates of $\chi = \chi_0$ 
(which are also beta-conjugates of $\beta$), then
either

\begin{itemize}
\item[(i)] $\displaystyle |\chi_j| ~>~ 1 + \frac{0.1}{n}$ for at least one $j \in \{0, 1, \ldots, n-1\}$, or
\item[(ii)] $g(X) = - g^{*}(X)$ if
$\displaystyle |\chi_j| ~\leq~ 1 + \frac{0.1}{n}$ holds for all $j=0, 1, \ldots, n-1$.
\end{itemize}
\end{theorem}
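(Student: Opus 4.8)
The plan is to read the statement as a direct consequence of Cassels' quantitative dichotomy for non-reciprocal integer polynomials \cite{cassels}, once two preliminary points are settled: that the hypotheses of Cassels apply to $g$, and that the Galois conjugates of $\chi$ are themselves beta-conjugates. First I would record that $g(X)$, being the minimal polynomial of the algebraic integer $\chi$, is monic, irreducible and has integer coefficients, and is non-reciprocal by assumption; in the factorization \eqref{numeratParryPol} of $n_\beta^*(X)$ it is therefore one of the irreducible non-reciprocal factors $\kappa_j$. Since $\chi$ is a beta-conjugate, $g \neq P_\beta$, and as both polynomials are irreducible they are coprime; hence no root of $g$ is a root of $P_\beta$, so none of $\chi_0, \ldots, \chi_{n-1}$ equals $\beta$ or a Galois conjugate of $\beta$. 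As every $\chi_j$ is a root of $n_\beta^*(X)$, each is by definition a beta-conjugate of $\beta$, which justifies the parenthetical assertion.

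With this reduction the dichotomy becomes a statement purely about the monic irreducible non-reciprocal integer polynomial $g$ of degree $n$, and I would split on whether $g = -g^*$. If $g = -g^*$, then alternative (ii) holds vacuously, since its conclusion is precisely $g = -g^*$, and there is nothing to prove. If instead $g \neq -g^*$, then combined with $g \neq g^*$ (non-reciprocity) one has $g \neq \pm g^*$, which is exactly the regime covered by Cassels' Theorem 1: a monic integer polynomial that is neither reciprocal nor anti-reciprocal has a zero of modulus strictly greater than $1 + 0.1/n$, and applying this to $g$ gives alternative (i). Thus in every case (i) or (ii) holds. I would also note that $g = -g^*$ forces $g(0) = -1$, so the exceptional case (ii) can occur only when $\chi$ is a unit, in agreement with the unit/non-unit distinction exploited in Theorem \ref{multiplicitybetaconju}.

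The substantive content is therefore Cassels' inequality itself, and I would reconstruct his argument only far enough to locate where the constant $0.1$ enters. Because $g$ is irreducible and $g \neq \pm g^*$, the polynomials $g$ and $g^*$ are coprime, so their resultant is a nonzero rational integer and hence satisfies $|\mathrm{Res}(g,g^*)| \ge 1$; writing it as $\prod_{i,j}(1 - \chi_i \chi_j)$ yields a multiplicative lower bound that one plays against the hypothesis $|\chi_j| \le 1 + 0.1/n$, using that $\chi_j \neq \pm 1$ (as a non-reciprocal $g$ is non-cyclotomic) together with a coefficient/trace comparison between $g$ and $g^*$ to force the palindromic or anti-palindromic symmetry of the coefficient vector. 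The main obstacle, and the reason I would cite \cite{cassels} rather than redo it, is the sharp quantitative bookkeeping that converts the near-circle hypothesis into the exact threshold $1 + 0.1/n$: a crude resultant estimate does not deliver this constant, and Cassels' optimization of the auxiliary quantity is precisely what pins it down. For context I would add that the merely qualitative fact that some conjugate lies off the unit circle already follows from Smyth's theorem (Theorem \ref{smyththm}) via $\mathrm{M}(g) \ge \theta_0$; Cassels' refinement is to make this effective, with an explicit dependence on $n$.
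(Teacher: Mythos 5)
Your proposal is correct and follows essentially the same route as the paper, which offers no argument beyond the citation "Theorems 1 and 2 in Cassels imply" the statement; your reduction (Galois conjugates of $\chi$ are beta-conjugates because $g$ is irreducible, divides $n_{\beta}^{*}$, and is coprime to $P_{\beta}$; then split on $g=-g^{*}$ versus $g\neq\pm g^{*}$ and invoke Cassels in the latter case) is exactly the intended unpacking. Your observation that case (ii) forces $g(0)=-1$, so that $\chi$ is a unit, also matches the remark the paper makes immediately after the theorem.
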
 

In the second case,
since $g(X) = \prod_{j=0}^{n-1} (X - \chi_j) = 
- \prod_{j=0}^{n-1} (1 - \chi_j X)$ is monic, 
all the beta-conjugates 
$\chi_j$ of $\beta$ ($j=0, 1, \ldots, n-1$)
are algebraic units, i.e. $|N(\chi_j)| = 1$.

\begin{theorem}
\label{casselsthm2}
If $\chi$ is a beta-conjugate
of a Parry number $\beta$ such that the minimal polynomial
(of degree $n$) of $\chi$ is non-cyclotomic 
and where
$\chi_1, \ldots, \chi_{n - 1}$ denote the 
Galois conjugates of $\chi$ ($ = \chi_0$), if
\begin{equation}
\label{casselsthm2ineq}
|\chi_j| ~\leq~ 1 + \frac{0.1}{n^2} \qquad ~\mbox{for}~ j=0, 1, \ldots, n-1,  
\end{equation}
then
at least one of the beta-conjugates 
$\chi_0, \chi_1, \ldots, \chi_{n - 1}$
of $\beta$ has absolute value~$1$.
\end{theorem}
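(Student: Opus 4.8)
The plan is to verify that $\chi$ meets the hypotheses of Theorem~2 of Cassels \cite{cassels} and then to quote that result. Being a beta-conjugate, $\chi$ is a nonzero root of the monic integer polynomial $n_{\beta}^{*}(X)$, hence an algebraic integer; its minimal polynomial $g$ has degree $n$ and is non-cyclotomic by assumption, so $\chi$ is not a root of unity. The conjugates $\chi_{0},\ldots,\chi_{n-1}$ are precisely the roots of $g$, which divides $n_{\beta}^{*}(X)$, so they are again roots of the Parry polynomial, and the hypothesis \eqref{casselsthm2ineq} confines all of them to the closed disk $\overline{D(0,1+0.1/n^{2})}$.

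The first step I would carry out is to show that $\chi$ is a unit, so that $\chi^{-1}$ is again an algebraic integer, as Cassels' argument requires. The norm $N(\chi)=\prod_{j=0}^{n-1}\chi_{j}$ is a nonzero rational integer, whence $|N(\chi)|\geq 1$; on the other hand $|N(\chi)|=\prod_{j=0}^{n-1}|\chi_{j}|\leq (1+0.1/n^{2})^{n}\leq e^{0.1/n}<2$, forcing $|N(\chi)|=1$. Thus $\chi^{-1}$ is an algebraic integer of degree $n$ with conjugates $\chi_{0}^{-1},\ldots,\chi_{n-1}^{-1}$; moreover, since $\prod_{j}|\chi_{j}|=1$ and each factor is at most $1+0.1/n^{2}$, every $|\chi_{j}|$ is also bounded below by $(1+0.1/n^{2})^{-(n-1)}$, so both $\chi$ and $\chi^{-1}$ have all their conjugates trapped in a thin annulus symmetric about the unit circle. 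This is exactly the configuration treated by Cassels.

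At this point I would invoke Theorem~2 of Cassels \cite{cassels}: a unit that is not a root of unity, all of whose conjugates satisfy a quadratic bound $|\chi_{j}|\leq 1+0.1/n^{2}$, must have at least one conjugate of modulus exactly $1$. The mechanism, which I would not reprove, compares the rational-integer power sums $\sum_{j}(\chi_{j}^{k}+\chi_{j}^{-k})$ with the values they would take if every $\chi_{j}$ lay on the unit circle; the quadratic smallness $0.1/n^{2}$ of the radial deviations, summed over the $n$ conjugates, is precisely what pins such a symmetric function to a value attainable only when some $|\chi_{j}|=1$. The genuine obstacle is therefore not the reduction above but this arithmetic core: the delicacy is the quadratic rather than the linear dependence on $n$ (contrast the coarser $1+0.1/n$ threshold in the non-reciprocal Theorem~\ref{casselsthm1}), together with the tracking of the explicit constant $0.1$, and it is exactly the content of Cassels' Theorem~2 that supplies it.
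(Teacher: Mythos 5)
Your proposal matches the paper's treatment: the paper states Theorem \ref{casselsthm2} as an immediate consequence of Theorem 2 of Cassels \cite{cassels} and supplies no further argument. Your extra verification that $\chi$ is an algebraic integer, not a root of unity, and in fact a unit (via the norm bound $|N(\chi)|\leq(1+0.1/n^{2})^{n}<2$) merely makes the hypotheses of Cassels' theorem explicit, so the route is essentially identical.
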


Theorem \ref{casselsthm1} and Theorem \ref{casselsthm2}
are often applicable because beta-conjugates of
Parry numbers are highly concentrated near the unit circle.  

\begin{theorem}
\label{betaconjuoutside1}
Let $\beta$ be a Parry number with Parry polynomial 
$n_{\beta}^{*}(X)$ factored as in \eqref{numeratParryPol}. Then all
its non-cyclotomic irreducible factors 
$\kappa_{j}(X)$ (with $j= 1, \ldots, q$)
and $g_{j}(X)$ (with $j=1, \ldots, u$) have at least one root of modulus $> 1$.
\end{theorem}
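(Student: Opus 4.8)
The plan is to use the factorization \eqref{numeratParryPol} together with the basic fact that $\beta$ is a Perron number of modulus $> 1$, and to argue that if some non-cyclotomic irreducible factor had all its roots inside the closed unit disk, a contradiction would arise. First I would recall that each $\kappa_j(X)$ and each $g_j(X)$ is a monic irreducible factor of $n_{\beta}^{*}(X)$ lying in $\mathbb{Z}[X]$, and that $\beta$ itself, being $> 1$, is a root of $P_{\beta}(X)$ but \emph{not} a root of any of these complementary factors. So the roots of $\kappa_j$ and $g_j$ are precisely beta-conjugates or Galois conjugates of modulus $< \beta$ distinct from $\beta$; by Solomyak's Theorem \ref{solomyakthm} they all lie in $\Omega$, hence in $\overline{D(0, R)}$ for some finite $R$, but this alone does not bound them below $1$.

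The key step is the following dichotomy for a monic irreducible $R(X) \in \mathbb{Z}[X]$ of degree $n \geq 1$. If \emph{all} roots $b_1, \ldots, b_n$ of $R$ satisfy $|b_i| \leq 1$, then $\prod_i |b_i| \leq 1$, so $|R(0)| = \prod_i |b_i| \leq 1$; since $R(0) \in \mathbb{Z}$ and $R$ is monic and non-constant, either $R(0) = 0$ (impossible for irreducible $R$ of degree $\geq 1$ unless $R(X) = X$, whose root $0$ is not a beta-conjugate) or $|R(0)| = 1$, forcing $\prod_i |b_i| = 1$ with every $|b_i| \leq 1$, hence $|b_i| = 1$ for all $i$. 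By Kronecker's theorem a monic integer polynomial all of whose roots lie on the unit circle has only roots of unity as zeros, so $R$ would be a product of cyclotomic polynomials; being irreducible, $R$ would itself be cyclotomic. This contradicts the hypothesis that $R$ is one of the $\kappa_j$ (non-reciprocal, hence non-cyclotomic) or one of the $g_j$ (reciprocal \emph{and non-cyclotomic} by the definition in \eqref{numeratParryPol}).

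Thus I would argue by contradiction: suppose some $\kappa_j$ or $g_j$ had all roots of modulus $\leq 1$. The chain above shows all its roots then lie exactly on the unit circle, so by Kronecker it is cyclotomic, contradicting its classification in the factorization \eqref{numeratParryPol}. Therefore each such factor must have at least one root of modulus $> 1$, which is the assertion. The main obstacle is handling the borderline case where all roots have modulus $\leq 1$ but not all strictly less than $1$: one must invoke Kronecker's theorem to rule out non-cyclotomic factors with all roots on the unit circle, rather than merely using the Mahler-measure or norm bound $|R(0)| = \prod |b_i|$, which by itself only forces $\prod |b_i| = 1$ and does not immediately exclude a mix of moduli below and on the circle. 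Once Kronecker is in place the remaining steps are immediate from the definitions of the factor types.
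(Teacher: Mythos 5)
Your proof is correct and takes essentially the same route as the paper: both reduce the claim to Kronecker's theorem, which forces any monic irreducible factor in $\mathbb{Z}[X]$ with all roots in the closed unit disk to be cyclotomic, so the $\kappa_j$ and $g_j$ must each have a root of modulus $>1$. Your intermediate step via $|R(0)| = \prod_i |b_i|$ to push all roots onto the unit circle is harmless but unnecessary, since Kronecker's theorem applies directly to nonzero algebraic integers all of whose conjugates lie in the closed unit disk.
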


\begin{proof}
By Kronecker's theorem \cite{kronecker}, 
if $\chi$ is a beta-conjugate of $\beta$ which lies in
the closed unit disk with all its Galois conjugates, then it would be 
a root of unity, i.e. a root of one of the cyclotomic factors 
$\Phi_{n_j}(X)$ in
\eqref{numeratParryPol}, and never a root of one of the irreducible factors
$\kappa_{j}(X)$ or $g_{j}(X)$. Hence if we assume the existence of non-reciprocal irreducible factors
and of reciprocal non-cyclotomic factors in the factorization of the Parry polynomial of $\beta$, then
these factors possess the mentioned property,
hence the claim.
\end{proof}

\begin{corollary}
\label{betaconjucoutingmini}
Let $\beta$ be a Parry number with Parry polynomial
$n_{\beta}^{*}(X)$ factored as in \eqref{numeratParryPol}.
Then
$$\# \{ \mbox{beta-conjugate} ~\chi~ \mbox{of}~ \beta \mid |\chi| > 1 \}$$
(counted with multiplicities) is
$$
\left\{
\begin{array}{lll}
\geq & \sum_{j=1}^{q} \gamma_j + \sum_{j=1}^{u} \delta_j & \mbox{if}~ \quad q+u > 0,\\
= & 0 & \mbox{if}~ \quad q = u = 0.
\end{array}
\right.
$$
\end{corollary}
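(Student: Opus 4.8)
The plan is to leverage Theorem \ref{betaconjuoutside1}, which has already established that each non-cyclotomic irreducible factor (every $\kappa_j$ and every $g_j$) possesses at least one root of modulus strictly greater than $1$. First I would observe that the roots of these factors are, by the very definition recalled before Theorem \ref{solomyakthm}, beta-conjugates of $\beta$: they are zeros of the Parry polynomial $n_{\beta}^{*}(X)$ which, being roots of the $\kappa_j$ or $g_j$ factors rather than of $P_{\beta}(X)$, are not Galois conjugates of $\beta$. Thus every root of modulus $>1$ coming from a non-cyclotomic factor contributes to the set being counted.

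Next I would handle the two cases of the displayed formula. In the degenerate case $q=u=0$ there are no $\kappa_j$ and no $g_j$ factors, so by \eqref{numeratParryPol} the only possible beta-conjugates arise from the cyclotomic factors $\Phi_{n_j}$; these are roots of unity and hence all lie \emph{on} the unit circle, giving modulus exactly $1$ and never $>1$. This yields the equality with $0$. In the main case $q+u>0$, the idea is to produce, for each of the $\sum_{j=1}^{q}\gamma_j + \sum_{j=1}^{u}\delta_j$ factors counted with multiplicity, at least one beta-conjugate of modulus $>1$, and then to argue that the total count of such beta-conjugates (with multiplicities as roots of $n_{\beta}^{*}$) is at least this sum.

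The step requiring the most care is the bookkeeping of multiplicities. Each irreducible non-cyclotomic factor $\kappa_j$ appears with multiplicity $\gamma_j$ and each $g_j$ with multiplicity $\delta_j$ in the factorization \eqref{numeratParryPol}. By Theorem \ref{betaconjuoutside1} each such irreducible factor has at least one root $\chi$ with $|\chi|>1$; since that factor divides $n_{\beta}^{*}(X)$ with multiplicity $\gamma_j$ (resp.\ $\delta_j$), the root $\chi$ occurs as a root of $n_{\beta}^{*}(X)$ with multiplicity at least $\gamma_j$ (resp.\ $\delta_j$). Distinct irreducible factors share no common root, so these contributions do not overlap. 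Summing the guaranteed multiplicity $\gamma_j$ over the $q$ non-reciprocal factors and $\delta_j$ over the $u$ reciprocal non-cyclotomic factors, the number of beta-conjugates of modulus $>1$, counted with multiplicity, is at least $\sum_{j=1}^{q}\gamma_j + \sum_{j=1}^{u}\delta_j$. The inequality is stated rather than an equality precisely because a single irreducible factor may well have several roots of modulus $>1$, and because further roots of modulus $>1$ could in principle arise; hence only a lower bound is asserted, which is exactly what the corollary claims.
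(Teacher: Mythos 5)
Your proof is correct and is essentially the argument the paper intends: the corollary is stated as an immediate consequence of Theorem \ref{betaconjuoutside1} (each $\kappa_j$ and $g_j$ has a root of modulus $>1$, such roots are beta-conjugates, and the multiplicities $\gamma_j$, $\delta_j$ of the factors carry over to the root count), with the $q=u=0$ case handled by noting that the remaining beta-conjugates are roots of unity. Your bookkeeping of multiplicities and the observation that distinct irreducible factors share no roots make explicit exactly what the paper leaves implicit.
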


The computation of an upper bound of 
Mignotte's discrepancy function on the
non-cyclotomic irreducible factors of the Parry
polynomial of $\beta$
will be reported elsewhere.  

\subsection{Real positive conjugates of a Parry number}
\label{S3.5}

For $R(X) \in \mathbb{R}[X]$ let ${\rm rp}(R) :=$ be the 
number of real positive roots of
$R(X)$ (counted with multiplicities).  

\begin{proposition}
\label{realpositiveorigin}
Let $\beta$ be a Parry number.
We have: ${\rm rp}(n_{\beta}^{*}) \geq 1$,
and, if $\chi \in (0,1)$
is a Galois- or a beta-conjugate of $\beta$, then either
\begin{itemize}
\item[(i)] ~$P_{\beta}(\chi) = 0$, and if $P_{\beta}(X)$
is reciprocal then $\chi = \beta^{-1}$ is the only real positive Galois conjugate
of $\beta$, or
\item[(ii)] ~$\kappa_{j}(\chi)=0$ for some $j \in \{1, 2, \ldots, q\}$ where
$\kappa_{j}(X)$ is one of the non-reciprocal irreducible 
factors in the factorization \eqref{numeratParryPol} of
$n_{\beta}^{*}(X)$.       
\end{itemize}
\end{proposition}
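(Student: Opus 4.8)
The plan is to read everything off the factorization \eqref{numeratParryPol} together with Solomyak's Theorem \ref{solomyakthm}, after first isolating one elementary fact about the positive real axis inside $\Omega$. The inequality ${\rm rp}(n_{\beta}^{*}) \geq 1$ is immediate: $\beta$ is a root of $P_{\beta}(X)$, hence of $n_{\beta}^{*}(X)$, and $\beta > 1 > 0$ is real. (Alternatively, in the simple case $n_{\beta}^{*}(X)$ is monic with constant term $-t_m < 0$ by \eqref{simplepoly3}, so the intermediate value theorem already forces a positive real root.) The crucial observation I would establish next is
$$\Omega \cap \mathbb{R}_{>0} = (0,1].$$
This holds because every $f(z) = 1 + \sum_{j \geq 1} a_j z^j \in \mathcal{B}$ has $0 \leq a_j \leq 1$, whence $f(x) \geq 1 > 0$ for all $x \in (0,1)$; thus $\mathcal{G}$ contains no point of $(0,1)$, so $\mathcal{G}^{-1}$ contains no point of $(1,+\infty)$, and since $\overline{D(0,1)} \cap \mathbb{R}_{>0} = (0,1]$ the identity follows. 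In particular, by Solomyak's Theorem \ref{solomyakthm}, no Galois conjugate $\neq \beta$ and no beta-conjugate of $\beta$ can be a positive real number $> 1$.

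Now let $\chi \in (0,1)$ be a conjugate, i.e. a root of $n_{\beta}^{*}(X)$; it must be a root of one of the four groups of factors in \eqref{numeratParryPol}. It cannot be a root of a cyclotomic factor $\Phi_{n_j}(X)$, whose only real roots are $\pm 1$. It cannot be a root of a reciprocal non-cyclotomic factor $g_j(X)$ either: since $g_j = g_j^{*}$, the number $1/\chi > 1$ would then also be a root of $g_j(X)$, hence a beta-conjugate of $\beta$ (the $g_j$ are distinct from $P_{\beta}$, so their roots are not Galois conjugates) distinct from $\beta$, contradicting the previous paragraph. Therefore $\chi$ is a root of $P_{\beta}(X)$, which is case (i), or of some $\kappa_j(X)$, which is case (ii), proving the dichotomy.

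Finally, in case (i) suppose $P_{\beta}(X)$ is reciprocal, and let $\chi$ be any positive real Galois conjugate with $\chi \neq \beta$. By the Perron property $\chi < \beta$, and by Solomyak $\chi \in \Omega \cap \mathbb{R}_{>0} = (0,1]$; since $P_{\beta}(X)$ is irreducible of degree $\geq 2$ (as $\beta$ is non-integer) it has no rational root, so $\chi \neq 1$ and hence $\chi \in (0,1)$. Reciprocity then makes $1/\chi > 1$ a Galois conjugate; were $1/\chi \neq \beta$, the identity $\Omega \cap \mathbb{R}_{>0} = (0,1]$ would again be violated, so $1/\chi = \beta$, i.e. $\chi = \beta^{-1}$. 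This shows $\beta^{-1}$ is the only positive real Galois conjugate other than $\beta$ itself.

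The only genuinely nontrivial ingredient is the positivity trick pinning $\Omega$ to $(0,1]$ on the positive axis; once it is combined with Solomyak's containment and the reciprocity of the $g_j(X)$ and of $P_{\beta}(X)$, every assertion drops out. The point that requires care is the bookkeeping that a root of $g_j(X)$, respectively the reciprocal partner $1/\chi$ of a Galois conjugate, is correctly classified as a beta-conjugate, respectively a Galois conjugate, distinct from $\beta$; this uses that the factors in \eqref{numeratParryPol} are pairwise distinct irreducibles with $P_{\beta}(X)$ singled out.
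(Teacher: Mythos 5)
Your proof is correct and follows essentially the same route as the paper: cyclotomic factors are excluded because their roots lie on $|z|=1$, and reciprocal non-cyclotomic factors are excluded because reciprocity would force the conjugate $1/\chi>1$ to be a beta-conjugate lying in $\Omega$, contradicting $\Omega\cap(1,+\infty)=\emptyset$. You additionally supply the positivity argument establishing $\Omega\cap\mathbb{R}_{>0}=(0,1]$ and the verification of the reciprocal-$P_{\beta}$ clause of (i), both of which the paper's proof leaves implicit.
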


\begin{proof}
The reciprocal irreducible factors in
\eqref{numeratParryPol} which are cyclotomic polynomials have no root
outside $|z|=1$, therefore cannot cancel at $\chi$. 
Assume now that an irreducible factor in \eqref{numeratParryPol}
is reciprocal, non-cyclotomic, and cancels at 
$\chi \in (0,1)$. Let us show that it is impossible. 
Indeed, it would also have
$\chi^{-1}$ as conjugate root of $\chi$, hence $\chi$
and $\chi^{-1}$ would be beta-conjugates of $\beta$.
Since $\Omega \cap (1,+\infty) = \emptyset$ and that $\Omega$ 
contains all beta-conjugates of all Parry numbers,  
it would implies the existence of a beta-conjugate of
$\beta$ outside $\Omega$, which is a contradiction. 
\end{proof}

\begin{proposition}
\label{mignottelowerbound}
Let $\beta$ be a Parry number.
Then
\begin{equation}
\label{mignottelowerineq}
\left|
\frac{{\rm rp}(n_{\beta}^{*})}{d_P}
\right|^{2}
~\leq~ \frac{2 \pi}{k} \cdot {\rm dis}(n_{\beta}^{*}).
\end{equation}
\end{proposition}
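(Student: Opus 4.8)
The plan is to obtain \eqref{mignottelowerineq} as a direct specialization of Mignotte's Theorem \ref{mignottethm} to the degenerate angular sector that collapses onto the positive real axis. First I would write
$$n_{\beta}^{*}(X) = \prod_{j=1}^{d_P} (X - \rho_j e^{i\phi_j}), \qquad \rho_j > 0, ~\phi_j \in [0, 2\pi),$$
which is legitimate precisely because $n_{\beta}^{*}(X)$ is monic with nonvanishing constant term, so that $0$ is not among its roots and every $\rho_j$ is strictly positive. The nonvanishing of the constant term is immediate from \eqref{simplepoly3} (it equals $-t_m\neq 0$) and from \eqref{nonsimplepoly2} (it equals $-(1+t_{p+1})\neq 0$); in the general non-simple case \eqref{nonsimplepoly1} the constant term is $t_m - t_{m+p+1}$, and the minimality of the preperiod length $m$ forces $t_m \neq t_{m+p+1}$, since otherwise periodicity would already hold from position $m$ and the preperiod could be shortened. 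In this representation a root is real and positive exactly when its argument $\phi_j$ vanishes, so that $N(0,0) = {\rm Card}\{j \mid \phi_j \in [0,0]\} = {\rm rp}(n_{\beta}^{*})$.

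Next I would apply Theorem \ref{mignottethm} to $R(X) = n_{\beta}^{*}(X)$, with $n = d_P$, choosing the admissible degenerate interval $\alpha = \eta = 0$ (equality is permitted in the hypothesis $0 \leq \alpha \leq \eta \leq 2\pi$). Since $(\eta - \alpha)/2\pi = 0$, the left-hand side of \eqref{ineqdis} reduces to $\bigl|{\rm rp}(n_{\beta}^{*})/d_P\bigr|^2$, while the right-hand side is $\frac{2\pi}{k}\,\tilde{h}(n_{\beta}^{*})/d_P = \frac{2\pi}{k}\,{\rm dis}(n_{\beta}^{*})$ by the very definition of ${\rm dis}$. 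This yields \eqref{mignottelowerineq} at once.

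There is essentially no analytic obstacle here, as the entire content is already packaged inside Mignotte's inequality. The one point that requires care is the legitimacy of the degenerate sector $[\alpha, \eta] = \{0\}$: for this I rely on the non-strict inequalities in the statement of Theorem \ref{mignottethm}, together with the observation that the continuous term $(\eta - \alpha)/2\pi$ vanishes there while $N(0,0)$ still counts a genuine integer quantity. Finally, combining \eqref{mignottelowerineq} with Proposition \ref{realpositiveorigin}, which guarantees ${\rm rp}(n_{\beta}^{*}) \geq 1$, one recovers the strict positivity \eqref{minipopo} in the quantitative form ${\rm dis}(n_{\beta}^{*}) \geq k/(2\pi d_P^{2})$, so the proposition indeed supplies the lower estimate for ${\rm dis}(n_{\beta}^{*})$ announced at the start of Section \ref{S3.5}.
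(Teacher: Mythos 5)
Your proof is correct and follows essentially the same route as the paper: both specialize Mignotte's Theorem \ref{mignottethm} to a sector isolating the positive real axis, the paper by applying \eqref{ineqdis} to the complementary sector $[\alpha,2\pi-\alpha]$ and letting $\alpha\to 0^{+}$, you by taking the degenerate sector $\alpha=\eta=0$ directly (which the non-strict hypotheses of the theorem permit). Your additional check that the constant term of $n_{\beta}^{*}$ is nonzero, so that every $\rho_j>0$, is a sound point of care that the paper leaves implicit.
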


\begin{proof}
Let us consider
the angular sector $0 \leq \alpha \leq 2 \pi - \alpha \leq 2 \pi$ with
$\alpha > 0$ small enough so that its complementary sector 
only contains $\beta$ and the real positive 
conjugates of $\beta$.   
From \eqref{ineqdis}, with $\alpha \to 0^{+}$, 
we deduce the lower bound \eqref{mignottelowerineq}
of Mignotte's discrepancy function.
\end{proof}

There are two natural questions: 
\begin{itemize}
\item[(i)] 
given $n \geq 1$ an integer
what is the average value of
${\rm rp}(n_{\beta}^{*})
$ over all Parry polynomials
$n_{\beta}^{*}$ of degree $d_P = n$?
Denote by $\mathbb{E}_{P}(n)$ this average value;  
\item[(ii)]
how behaves 
$\mathbb{E}_{P}(n)/n$ when $n$ tends to infinity?  
\end{itemize}
 
The general context of such questions is Kac's formula
\cite{kac} and its recent improvements \cite{edelmankostlan}. Let us recall 
it first. The expected 
number of real zeros $\mathbb{E}(n)$ of a random polynomial of degree $n$
with real coefficients
is given by  
\begin{equation}
\label{kacasympto}
\mathbb{E}(n) = \frac{2}{\pi} \log (n) + 0.6257358072\ldots + \frac{2}{n \pi} + O(1/n^2)
\qquad \mbox{for}~ n \to \infty,
\end{equation}
assuming coefficients are following independent standard normal laws. 
The $\frac{2}{\pi} \log n$ term was obtained by Kac in 1943
and the other terms require integral formulas 
and their asymptotic series 
from curves drawn on spheres \cite{edelmankostlan}.   
For each $n \geq 1$ these averaging techniques 
could be adapted to the smaller set of Parry polynomials of degree $n$, instead of 
the whole set of polynomials of $\mathbb{R}[X]$ of degree $n$, and
to real positive roots only, to compute
$\mathbb{E}_{P}(n)$, but
the set $\mathcal{P}\mathcal{P}$ is badly described 
and is not suitable for this type of computation.
At least, since $\lim_{n \to \infty} \mathbb{E}(n)/ n = 0$
by \eqref{kacasympto} we could expect 
\begin{equation}
\label{expectationn}
\lim_{n \to \infty} \mathbb{E}_{P}(n)/n = 0.
\end{equation}  

It seems that we cannot prove \eqref{expectationn}
yet.
However, the remarkable Theorem 4.1 
of Borwein, Erd\'elyi and K\'os \cite{borweinerdelyikos}
readily implies, for a 
large class of Parry numbers $\beta \in (1, 2)$
which are algebraic units,  

\begin{theorem}
\label{borweineerdelyikosthm}
There is an absolute constant $C_3 > 0$ such that, for all Parry numbers $\beta$ such that
$${\rm H}(n_{\beta}^{*}) = 1 \qquad \mbox{and} \qquad |N(\beta)| = 1,$$
the inequality
\begin{equation}
\label{borweinerdelyikosineq}
\frac{{\rm rp}(n_{\beta}^{*})}{d_P}   ~\leq~ C_3 \, \frac{1}{\sqrt{d_P}}
\end{equation}
holds.
\end{theorem}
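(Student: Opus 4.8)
The plan is to deduce the estimate from Theorem 4.1 of \cite{borweinerdelyikos}, after showing that the two hypotheses force $n_{\beta}^{*}(X)$ to be a $\{-1,0,1\}$-polynomial whose constant term has modulus $1$, and that all but one of its real positive roots are confined to the interval $(0,1]$. First I would record the shape of $n_{\beta}^{*}$. The assumption ${\rm H}(n_{\beta}^{*}) = 1$ means that every coefficient lies in $\{-1,0,1\}$, and since $n_{\beta}^{*}$ is monic its leading coefficient is $1$; in particular $\lfloor\beta\rfloor = 1$, so $\beta \in (1,2)$. Next I would verify that $|n_{\beta}^{*}(0)| = 1$. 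Indeed $n_{\beta}^{*}(0)$ is a rational integer with $|n_{\beta}^{*}(0)| \leq {\rm H}(n_{\beta}^{*}) = 1$, hence it equals $0$ or $\pm 1$; it cannot vanish, because the roots of $n_{\beta}^{*}$ are $\beta$, its Galois conjugates (whose product is $\pm N(\beta)$, nonzero since $|N(\beta)| = 1$) and the beta-conjugates of $\beta$, none of which is $0$ since a beta-conjugate is by definition the inverse of a zero of $f_{\beta}$. Thus $n_{\beta}^{*}(0) = \pm 1$.

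Second, I would localise the real positive roots. By the Perron property $\beta$ is the unique root of modulus $\geq \beta$, and by Solomyak's Theorem \ref{solomyakthm} every conjugate $(\neq \beta)$ of $\beta$ lies in $\Omega$, while $\Omega \cap (1,+\infty) = \emptyset$ (the fact already exploited in the proof of Proposition \ref{realpositiveorigin}). Consequently every real positive root of $n_{\beta}^{*}$ other than $\beta$ itself lies in $(0,1]$, so that ${\rm rp}(n_{\beta}^{*}) \leq 1 + \#\{\text{roots of } n_{\beta}^{*} \text{ in } (0,1], \text{ counted with multiplicity}\}$.

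Finally I would invoke Theorem 4.1 of \cite{borweinerdelyikos}: since $n_{\beta}^{*}$ has degree $d_P$, all its coefficients are bounded by $1$ in modulus and $|n_{\beta}^{*}(0)| = 1$, that theorem bounds the number of its real zeros in $[0,1]$ (with multiplicity) by $c\sqrt{d_P}$ for an absolute constant $c$. Combining this with the previous step gives ${\rm rp}(n_{\beta}^{*}) \leq 1 + c\sqrt{d_P} \leq C_3\sqrt{d_P}$ for a suitable absolute constant $C_3$, since $d_P \geq 1$. The step that I expect to require the most care is matching the precise hypotheses and conclusion of \cite{borweinerdelyikos} to the situation at hand: namely, confirming that the quoted $O(\sqrt{d_P})$ bound is exactly for the real zeros in the interval $[0,1]$ singled out by Solomyak's obstruction, and that it is counted \emph{with multiplicities}, so that a root at $x=1$ arising from a possible factor $\Phi_1(X)=X-1$ is absorbed into the bound rather than needing to be controlled separately by the multiplicity estimates of Theorem \ref{pinnervaalerthm1}.
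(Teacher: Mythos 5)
Your proposal is correct and follows exactly the route the paper intends: the paper gives no written proof beyond asserting that Theorem 4.1 of Borwein, Erd\'elyi and K\'os ``readily implies'' the statement, and your argument supplies precisely the missing verifications (coefficients in $\{-1,0,1\}$ with $|n_{\beta}^{*}(0)|=1$ since $0$ is never a root, and confinement of the positive conjugates $\neq\beta$ to $(0,1]$ via Solomyak's set). The only remark worth adding is that your argument shows the hypothesis $|N(\beta)|=1$ is not actually needed for the nonvanishing of the constant term, so it serves only to match the class of polynomials singled out in the cited theorem.
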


When the Parry polynomial is irreducible,
the remarkable Theorem in Mignotte \cite{mignotte1} p. 83
leads to a result of the same type: it
readily implies

\begin{theorem}
\label{mignotterpthm}
For every Parry number $\beta$ such that
the Parry polynomial $n_{\beta}^{*}(X)$
is irreducible, then
the inequality
\begin{equation}
\label{borweinerdelyikosineq}
\frac{{\rm rp}(n_{\beta}^{*})}{d_P}   ~\leq~
\sqrt{\frac{2 \pi}{k}} \, \sqrt{3 \, {\rm Log} (2 d_P) + 
4 \, {\rm Log} \, {\rm M}(P_{\beta})} \, \frac{1}{\sqrt{d_P}}
\end{equation}
holds.
\end{theorem}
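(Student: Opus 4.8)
The plan is to sandwich Mignotte's discrepancy function ${\rm dis}(n_{\beta}^{*})$ between a lower bound already available in the paper and an upper bound supplied by the quoted theorem of Mignotte, using the irreducibility hypothesis to make the right-hand side depend on ${\rm M}(P_{\beta})$ rather than on the a priori larger ${\rm M}(n_{\beta}^{*})$.

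First I would invoke Proposition \ref{mignottelowerbound}, which applied to $n_{\beta}^{*}$ gives, by \eqref{mignottelowerineq},
$$\left(\frac{{\rm rp}(n_{\beta}^{*})}{d_P}\right)^2 \leq \frac{2\pi}{k}\,{\rm dis}(n_{\beta}^{*}) = \frac{2\pi}{k}\cdot\frac{\tilde h(n_{\beta}^{*})}{d_P}.$$
Thus it suffices to prove the single estimate $\tilde h(n_{\beta}^{*}) \leq 3\,{\rm Log}(2d_P) + 4\,{\rm Log}\,{\rm M}(P_{\beta})$; substituting it into the display above and extracting the square root reproduces the claimed inequality verbatim. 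Next I would use irreducibility to identify the polynomials: since $n_{\beta}^{*}(X)$ is monic (see \eqref{numeratParryPol}), vanishes at $\beta$, and is assumed irreducible, it is the minimal polynomial, $n_{\beta}^{*} = P_{\beta}$. In the notation of \eqref{numeratParryPol} this is the degenerate case $s=q=u=0$, i.e. $\beta$ has no beta-conjugate, and in particular $d_P = \deg P_{\beta}$ and ${\rm M}(n_{\beta}^{*}) = {\rm M}(P_{\beta})$.

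The decisive input is then Mignotte's theorem (\cite{mignotte1}, p. 83), which bounds the radial integral \eqref{tildedis} of an integer polynomial $R$ in the shape $\tilde h(R) \leq 3\,{\rm Log}(2\deg R) + 4\,{\rm Log}\,{\rm M}(R)$. Applying it to $R = n_{\beta}^{*} = P_{\beta}$ and inserting $d_P = \deg P_{\beta}$ and ${\rm M}(n_{\beta}^{*}) = {\rm M}(P_{\beta})$ gives exactly the required $\tilde h(n_{\beta}^{*}) \leq 3\,{\rm Log}(2d_P) + 4\,{\rm Log}\,{\rm M}(P_{\beta})$, completing the argument.

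The main obstacle is entirely lodged in that p. 83 estimate, and it is purely analytic. Since $R^{(r)}$ is monic with all its roots on the unit circle, Jensen's formula gives $\frac{1}{2\pi}\int_0^{2\pi}{\rm Log}\,|R^{(r)}(e^{i\theta})|\,d\theta = {\rm Log}\,{\rm M}(R^{(r)}) = 0$, so that $\tilde h(R) = \frac{1}{2\pi}\int_0^{2\pi}{\rm Log}^-|R^{(r)}(e^{i\theta})|\,d\theta$ only measures how small $R^{(r)}$ can become on arcs where the normalized arguments cluster; converting this clustering into the Mahler measure ${\rm M}(R)$ of the original $R$, with the explicit constants $3$ and $4$, is the genuine content. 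Everything else is the bookkeeping above — and the irreducibility hypothesis is used solely to replace ${\rm M}(n_{\beta}^{*})$, which in the reducible case also registers the beta-conjugates lying off the unit circle (Theorem \ref{betaconjuoutside1}), by ${\rm M}(P_{\beta})$.
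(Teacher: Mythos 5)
Your reconstruction follows the same route the paper intends: the paper offers no written proof beyond the phrase ``readily implies'' applied to the theorem of \cite{mignotte1} p.~83, and the mechanism is exactly the one you describe --- the sector argument of Proposition \ref{mignottelowerbound} giving $\bigl({\rm rp}(n_{\beta}^{*})/d_P\bigr)^2 \leq \frac{2\pi}{k}\,\tilde h(n_{\beta}^{*})/d_P$, the identification $n_{\beta}^{*}=P_{\beta}$ under the irreducibility hypothesis, and Mignotte's bound $\tilde h \leq 3\,{\rm Log}(2d)+4\,{\rm Log}\,{\rm M}$ to close the estimate. The bookkeeping is correct and reproduces \eqref{borweinerdelyikosineq} verbatim.

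One point in your commentary is wrong, however, and it concerns the crux. You state Mignotte's p.~83 estimate as valid for ``an integer polynomial $R$'' and conclude that ``the irreducibility hypothesis is used solely to replace ${\rm M}(n_{\beta}^{*})$ by ${\rm M}(P_{\beta})$.'' The inequality $\tilde h(R)\leq 3\,{\rm Log}(2\deg R)+4\,{\rm Log}\,{\rm M}(R)$ is false for general $R\in\mathbb{Z}[X]$: take $R(X)=(X-1)^d$, for which ${\rm M}(R)=1$ while $\tilde h(R)=\frac{d}{2\pi}\int_0^{2\pi}{\rm Log}^{+}|e^{i\theta}-1|\,d\theta$ grows linearly in $d$, overwhelming the logarithmic right-hand side. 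Mignotte's theorem requires irreducibility (at bottom, separability plus integrality, so that $|{\rm disc}|\geq 1$ furnishes a lower bound on root separation and hence on how small $R^{(r)}$ can be on the circle --- precisely the quantity your Jensen-formula remark isolates). So the hypothesis that $n_{\beta}^{*}$ is irreducible is doing genuine analytic work in making the key estimate available at all, not merely swapping one Mahler measure for another; this is also why the paper cannot extend the bound to reducible Parry polynomials and must leave \eqref{expectationn} open. Since in the theorem at hand the estimate is only ever applied to the irreducible polynomial $P_{\beta}=n_{\beta}^{*}$, your chain of deductions survives once the cited theorem is stated with its correct hypothesis.
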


Let us now show that 
Mignotte's Theorem \ref{mignottethm} 
gives in a simple way an upper bound to the multiplicity
of a beta-conjugate of a Parry number,  
valid for any beta-conjugate, complementing then  
Theorem \ref{multiplicitybetaconju}, 
Corollary \ref{betaconj13simple} and Theorem
\ref{pinnervaalerthm1}.

Let $\beta$ be a Parry number with Parry polynomial
$n_{\beta}^{*}(X)$ of degree $d_P$.
Then the multiplicity $q$ of a beta-conjugate of
$\beta$ satisfies:
\begin{equation}
\label{betaconjumultineq}
q ~\leq~ \sqrt{\frac{2 \pi}{k}} \sqrt{\mbox{dis}(n_{\beta}^{*})} \, d_P \,, 
\end{equation}
where $k$ is Catalan's constant.

This inequality \eqref{betaconjumultineq}
is obtained as follows: 
let $q$ be the multiplicity of a beta-conjugate
$\chi \in \Omega$ of $\beta$ in the Parry polynomial
$n_{\beta}^{*}(X)$. Let us write
$\chi = r e^{i \phi}$ with $r > 0$. Let us take
$\eta = \phi + \epsilon/2$, $\alpha = \phi -
\epsilon/2$, for $\epsilon > 0$ small enough so that
(i) the angular sector
$$\mathcal{S}_{\phi, \epsilon} := \{z \mid \mbox{arg}(z) \in [\alpha, \eta]\}$$
contains
$\chi$, with eventually other beta-conjugates or Galois-conjugates having the same argument
$\phi$, but does not contain other roots of
$n_{\beta}^{*}(X)$ having an argument
$\neq \phi$,
(ii) 
the angular sector $\mathcal{S}_{0,\epsilon}$ only contains the real positive conjugates
of $\beta$, including $\beta$. By rotating
$\mathcal{S}_{0, \epsilon}$ of an angle $\phi$
and allowing $\epsilon$ to tend to $0^{+}$, we obtain
from Mignotte's Theorem \ref{mignottethm}
$$\left|\frac{q}{d_P}\right|^{2}  
~\leq~ \frac{2 \pi}{k} \, \mbox{dis}(n_{\beta}^{*}),$$
from which \eqref{betaconjumultineq} is deduced.
This proof 
uses the relation 
$e^{i \phi} \mathcal{S}_{0, \epsilon} = 
\mathcal{S}_{\phi,\epsilon}$ with $\epsilon$ very small, 
and the counting
processes of the roots of the Parry polynomial at $0$ and at $\phi$. 
They are correlated: a large number of 
real positive conjugates of $\beta$ 
leads to large Mignotte's discrepancies
as the inequality 
\eqref{mignottelowerineq} shows it, and this means that 
the {\it common} upper bound 
\eqref{betaconjumultineq}
of the multiplicities of the beta-conjugates 
is probably
not very good in this case.
This is likely to occur when
the number of non-reciprocal irreducible factors
in the factorization of the Parry polynomial of $\beta$
is large, from Proposition \ref{realpositiveorigin} (ii).
However, this type of upper bound is not good
from a numerical viewpoint as shown in Section \ref{S4},
what suggests that Mignotte's approach could be improved.
 
\subsection{An equidistribution limit theorem}
\label{S3.6}

Theorem \ref{landauclustering}, Theorem \ref{mignottethm} 
and Conditions \eqref{seqinfinidegrees} 
express the ``speed of convergence" and the 
``angular equidistributed character"
of the conjugates of a Parry number, towards the unit circle,
or of the collection of Galois conjugates 
and beta-conjugates of a sequence of Parry numbers.
So far, the limit of this equidistribution phenomenon is not yet formulated.
In which terms should it be done? What is the natural framework 
for considering at the same time
all the conjugates of a Parry number and what is the topology
which can be invoked?  

In this respect we will follow Bilu's equidistibution limit
theorem in Bilu \cite{bilu} \cite{granville}, though the conditions 
of convergence of Parry numbers are here much 
more general than those considered
by Bilu.  

Let $\beta$ be a Parry number for which all beta-conjugates are
simple roots of the Parry polynomial
$n_{\beta}^{*}(X)$. Let $\mathbb{K}$ be the algebraic number field
generated by $\beta$, its Galois conjugates and its beta-conjugates
over $\mathbb{Q}$.
We have the following field extension: 
$\mathbb{K} \supset \mathbb{Q}(\beta)$ and $\mathbb{K}/\mathbb{Q}$ is
Galois.
We denote by $\mathbb{K}_v$ the completion of
$\mathbb{K}$ for the Archimedean or non-Archimedean place $v$ of
$\mathbb{K}$. 
The absolute logarithmic height of $\beta$ is defined as:
$$h(\beta) := \frac{1}{[\mathbb{K}:\mathbb{Q}]}\,
\sum_v [\mathbb{K}_v : \mathbb{Q}_v ]\, \max\{0,{\rm Log}|\beta|_v\}.$$
Let us now consider the whole set of Galois conjugates $(\beta^{(i)})$
and
beta-conjugates $(\xi_j)$ of $\beta$. Denote 
$$\Delta_{\beta} := \frac{1}{[\mathbb{K} : \mathbb{Q}]}
\sum_{\sigma: \mathbb{K} \to \mathbb{C}} \, \delta_{\{\sigma(\beta)\}}$$
the weighted sum of the Dirac measures at all the conjugates 
$\sigma(\beta)$ of $\beta$, where $\sigma$ runs over the $d_P$
$\mathbb{Q}$-automorphisms of 
$\mathbb{K}$
$$\sigma: \beta \to \beta^{(i)}, \qquad \mbox{or}\qquad
\sigma: \beta \to \xi_j$$
which send $\beta$ either to one of its Galois conjugates 
or to one of its beta-conjugates.

Let us recall that a sequence $(\alpha_k)$ of points in
$\overline{\mathbb{Q}}^{*}$ is strict if any proper algebraic subgroup of
$\overline{\mathbb{Q}}^{*}$ contains $\alpha_k$ for only finitely many values of $k$.

The topology which is used is the following: a sequence of probability measures
$\{\mu_k\}$ 
on a metric space S weakly converges to $\mu$ if for any bounded
continuous function $f: S \to \mathbb{R}$ we have
$$(f,\mu_k) \to (f, \mu) \qquad \mbox{as}~ k \to +\infty.$$
The remarkable Theorem 1.1 in \cite{bilu}
readily implies

\begin{theorem}
\label{biluthm}
Let $(\beta_i)_{i \geq 1}$ be a strict sequence of Parry numbers whose Parry polynomials
have all simple roots, and which satisfies   
\begin{equation}
\label{condbilu}
\lim_{i \to \infty} h(\beta_i) ~=~ 0.
\end{equation}
Then
\begin{equation}
\lim_{i \to \infty} \Delta_{\beta_i} ~=~ \nu_{\{|z|=1\}}
\qquad \quad \mbox{weakly}
\end{equation}
where
$\nu_{\{|z|=1\}}$ is the Haar measure on the unit circle.
\end{theorem}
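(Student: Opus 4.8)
The plan is to obtain the statement as a consequence of Bilu's equidistribution theorem (\cite{bilu}, Theorem~1.1): if $(\alpha_k)$ is a strict sequence of nonzero algebraic numbers with $h(\alpha_k)\to 0$, then the normalized counting measure on the Galois orbit of $\alpha_k$ converges weakly to $\nu_{\{|z|=1\}}$. First I would check that both hypotheses are supplied: strictness is assumed, and $h(\beta_i)\to 0$ is exactly \eqref{condbilu}. A preliminary observation worth recording is that these two hypotheses already force $\deg\beta_i\to\infty$: by Northcott's finiteness theorem only finitely many algebraic numbers have bounded degree and bounded height, so a bounded-degree subsequence would take finitely many values, each of which generates an infinite cyclic, hence proper, subgroup of $\overline{\mathbb{Q}}^{*}$ and is therefore barred by strictness from recurring infinitely often. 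Consequently $d_{P,i}\ge d_i\to\infty$, which I will use to discard lower-order contributions.

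Since $\mathbb{K}/\mathbb{Q}$ is Galois, the embeddings $\sigma:\mathbb{K}\to\mathbb{C}$ are its automorphisms, each sending $\beta$ to a Galois conjugate with the uniform multiplicity $[\mathbb{K}:\mathbb{Q}(\beta)]$; read literally, $\Delta_\beta$ is thus the Galois-orbit measure $\frac1d\sum_{i=0}^{d-1}\delta_{\beta^{(i)}}$ attached to the minimal polynomial, and Bilu's theorem applies verbatim to the orbit of $\beta_i$. To genuinely incorporate the beta-conjugates, as the theorem intends, I would instead pass to the empirical measure on the \emph{entire} root set of $n_{\beta_i}^{*}$ and expand it, using the factorization \eqref{numeratParryPol} (all exponents equal to $1$ by the simplicity hypothesis), as the degree-weighted convex combination
\[
\Delta_{\beta_i}=\sum_{f\mid n_{\beta_i}^{*}}\frac{\deg f}{d_{P,i}}\,\Delta_f,\qquad \Delta_f=\frac{1}{\deg f}\sum_{f(\rho)=0}\delta_{\rho},
\]
over the distinct irreducible factors $f\in\{P_{\beta_i},\Phi_{n_j},\kappa_j,g_j\}$. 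Weak convergence is stable under such finite averages, so it suffices to treat the factors type by type.

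The cyclotomic factors are harmless: their roots already lie on $\{|z|=1\}$, and by Theorem~\ref{pinnervaalerthm1}(ii) the number of cyclotomic roots is $\ll d_{P,i}\,({\rm Log}\,\mathcal{R}/\mathcal{R})^{1/2}$. Since ${\rm Log}\,\beta_i/d_{P,i}\le h(\beta_i)\to 0$ and $d_{P,i}\to\infty$, we have $\mathcal{R}\approx d_{P,i}/{\rm Log}\,\beta_i\to\infty$, so this count is $o(d_{P,i})$: the cyclotomic weight vanishes and these factors may be deleted in the limit. For each non-cyclotomic factor $f$ (the minimal polynomial and the $\kappa_j,g_j$) I would apply Bilu to a root of $f$. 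Non-escape of mass is automatic, since by Solomyak's theorem (Theorem~\ref{solomyakthm}) every root other than $\beta_i$ lies in the bounded set $\overline\Omega$, while $\beta_i$ itself carries the single weight $1/d_{P,i}\to 0$.

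The hard part is to upgrade Bilu's statement, which governs the orbit of a single algebraic number, to the full root set. The favorable input is that the normalized Mahler measure does tend to zero: writing $\log M(n_{\beta_i}^{*})={\rm Log}\,\beta_i+\sum_{|\rho|>1,\ \rho\neq\beta_i}{\rm Log}|\rho|$ and splitting the sum across the annulus of \eqref{proportionroots}, the containment of all non-dominant roots in $\overline\Omega$ bounds each term by $\log R$ with $R=\max_{z\in\overline\Omega}|z|$, while \eqref{proportionroots} together with $\|n_{\beta_i}^{*}\|_2\le (d_{P,i}+1)^{1/2}\lceil\beta_i\rceil$ forces the off-annulus proportion $\mu_{\epsilon,i}\to 0$; letting $i\to\infty$ and then $\epsilon\to 0^{+}$ yields $\frac{1}{d_{P,i}}\log M(n_{\beta_i}^{*})\to 0$. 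What remains delicate, and is the true crux, is to exclude concentration of the root measure on a proper arc of the circle: this is precisely the information that strictness injects into the potential-theoretic core of Bilu's proof, and transferring it from the sequence $(\beta_i)$ to the auxiliary roots of the individual factors $\kappa_j,g_j$ is the step that requires genuine care, since the clustering estimate of Theorem~\ref{landauclustering} controls off-circle roots only globally and not factor by factor. Granting this, the convex combination assembles the orbitwise limits into $\Delta_{\beta_i}\to\nu_{\{|z|=1\}}$ weakly, which is the assertion.
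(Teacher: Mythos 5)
There is a genuine gap, and you flag it yourself: the sentence beginning ``Granting this\dots'' concedes the one step on which the whole argument rests. Your strategy is to apply Bilu's theorem as a black box to each irreducible factor of $n_{\beta_i}^{*}$ separately, but Bilu's hypotheses (a strict sequence of algebraic numbers with heights tending to zero) are given only for $\beta_i$ itself, not for the roots of $\kappa_j$ or $g_j$. A beta-conjugate is an algebraic number about which nothing is assumed: there is no a priori reason why the normalized Mahler measures of the individual non-cyclotomic factors should tend to zero, nor why a sequence of such roots should be strict, and a single factor could in principle carry a positive proportion of $d_{P,i}$ with angularly concentrated roots --- nothing in your decomposition excludes this. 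So the factor-by-factor reduction does not close.

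The paper's route is different and sidesteps the transfer problem entirely: it does not factor $n_{\beta_i}^{*}$ at all, but reruns Bilu's proof with the Parry polynomial in place of the minimal polynomial (legitimate precisely because all roots of $n_{\beta_i}^{*}$ are assumed simple) and with Mignotte's Theorem \ref{mignottethm} in place of Erd\H{o}s--Tur\'an. The two analytic inputs are then global quantities of the Parry polynomial: the normalized Mahler measure $\frac{1}{d_{P,i}}{\rm Log}\,M(n_{\beta_i}^{*})\to 0$ (which you do establish correctly, via Landau/Jensen and $\|n_{\beta_i}^{*}\|_2\le (d_{P,i}+1)^{1/2}\lceil\beta_i\rceil$), and the discrepancy ${\rm dis}(n_{\beta_i}^{*})$, bounded by $\frac{1}{d_{P,i}}{\rm Log}\bigl(L(n_{\beta_i}^{*})/\sqrt{|a_0 a_{d_{P,i}}|}\bigr)\to 0$ since the height of a Parry polynomial is at most $\lceil\beta_i\rceil$ while $d_{P,i}\to\infty$. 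This gives radial concentration and angular equidistribution of \emph{all} roots --- Galois conjugates and beta-conjugates, cyclotomic or not --- in one stroke, with strictness used exactly as in Bilu's original argument; neither the Pinner--Vaaler count of cyclotomic factors nor a per-factor invocation of Bilu is needed. Your preliminary observations (Northcott forcing $d_{P,i}\to\infty$, the Mahler-measure estimate, Solomyak confinement of the roots to $\Omega$) are all correct and reusable in that argument; the convex decomposition over irreducible factors is the step to abandon.
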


In the proof of his theorem Bilu uses the Erd\H{o}s-Tur\'an theorem
as basic ingredient and the fact that the minimal polynomials of the $\beta_i$s'  
have distinct roots. Here
working with non-irreducible Parry polynomials 
for which all the roots are distinct and Mignotte's theorem
suffices
to give the same conclusion.

Let us observe that the convergence condition \eqref{condbilu}  
means in particular that $\beta_i \to 1^{+}, i \to \infty$, while
convergence conditions \eqref{seqinfinidegrees} cover many other cases 
for general sequences of Parry numbers.
In the convergence condition \eqref{condbilu} is also
included some conditions on the $p$-dic valuations of the beta-conjugates
of the $\beta_i$s'. These aspects will be reconsidered elsewhere by the author.

\section{Examples}
\label{S4}

Table 1 gives Mignotte's discrepancy function
$\frac{2 \pi}{k} \frac{\tilde{h}(n_{\beta}^{*})}{d_P}$
($k$ is Catalan's constant) relative to the following four Pisot numbers:
\begin{itemize}
\item the confluent Parry number $\beta=9.999\ldots$ dominant 
root of $X^{40} - 9 \sum_{i=1}^{39} X^i - 4 = P_{\beta}(X) 
= n_{\beta}^{*}(X)$. It is 
a Pisot number which has no 
beta-conjugate \cite{vergergaugry2}, for which
$d_{\beta}(1) = 0. k_{1}^{d-1} k_2$, with $k_1=9, k_2=4$
and
$d= d_P = 40$. The height of the Parry polynomial
of $\beta$ is 9,
\item Bassino's cubic Pisot number $\beta=30.0356\ldots$ dominant root of
$X^3 -(k+2) X^2 + 2k X - k = P_{\beta}(X)$, with $k=30$,
for which the complementary factor 
is the product 
$\Phi_2 \Phi_3 \Phi_5 \Phi_6 \Phi_{10} \Phi_{15} \Phi_{30} \Phi_{31}$
of cyclotomic factors.  
The height of the Parry polynomial of $\beta$ is 30 and $d_P= 62$,
\item the smallest Pisot number $\beta = 1.767...$ for which the 
complementary factor is (NC) reciprocal 
and non-cyclotomic (Boyd \cite{boyd2} p. 850): 
it is the dominant root of
$P_{\beta}(X) = 
X^{12} - X^{10} - 2 X^9 - 2 X^8 - X^7 - X^6 - X^5 - X^4 + X^2 + X + 1$
and has
$\Phi_4 \Phi_6 \Phi_{12} \Phi_{30} L(-X)$
as complementary factor where 
$L(X)=X^{10} + X^9 - X^7 -X^6 - X^5 -X^4 -X^3 +X+1$ is Lehmer's polynomial.  
The R\'enyi $\beta$-expansion of 1
has preperiod length 4 and period length 34. The Parry polynomial 
of $\beta$ has degree $d_P$ equal to 38 and height 1, 
\item the second-smallest Pisot number $\beta=1.764\ldots$
for which the complementary factor is (NR) non-reciprocal
(Boyd \cite{boyd2} p. 850). We have
$P_{\beta}(X) =X^{11} - 2 X^9 - 2 X^8 -X^7 + 2 X^5 + 2 X^4 + X^3 -X -1$ and
$\Phi_6 G(X)$ as complementary factor, where
$G(X)= X^{22} + X^{15} +X^8 -X^7 -1$ is non-reciprocal.  
The R\'enyi $\beta$-expansion of 1
has preperiod length 30, period length 5, and
H$(n_{\beta}^{*}) = 1$.
\end{itemize} 
On each line, in the column ``Parry", 
is reported Mignotte's discrepancy function
with
the value (ET) of the discrepancy function
$16^{2} \times \frac{1}{d_P}
\mbox{Log}\left(\frac{\|n_{\beta}^{*}\|_1}{\sqrt{|n_{\beta}^{*}(0)|}}\right)$
of Erd\H{o}s-Tur\'an 
for comparison.

\begin{figure}
\begin{center}
\includegraphics[width=8cm]{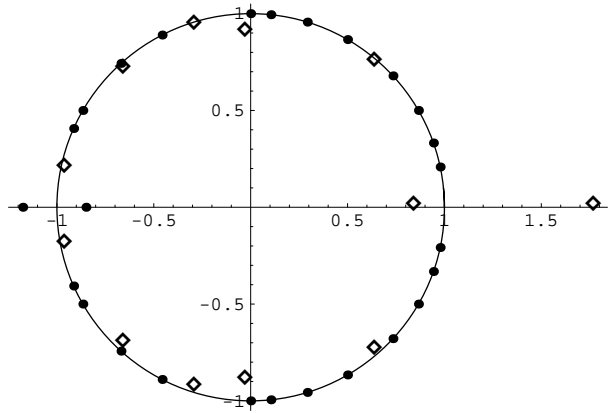}
\end{center}
\caption{Concentration and equi-distribution of the Galois
conjugates ($\diamond$) $\neq \beta$
and the beta-conjugates ($\bullet$)
of the smallest NC Pisot number
$\beta = 1.767\ldots$ near the unit circle.}
\label{smallestNCPisot}
\end{figure}

In the column ``Mini" is reported the value
$\frac{2 \pi}{k} \frac{\tilde{h}(P_{\beta})}{d_P},$
resp. in the column ``cycl." the value
$\frac{2 \pi}{k} \frac{\tilde{h}(\prod_{j=1}^{s} \Phi_{n_j}^{c_j})}{d_P},$
resp. in the column ``rec. non-cycl." the value
$\frac{2 \pi}{k} \frac{\tilde{h}(\prod_{j=1}^{q} g_{j}^{\delta_j}
)}{d_P},$
resp. in the column ``non-rec." the value
$\frac{2 \pi}{k} \frac{\tilde{h}(\prod_{j=1}^{u} \kappa_{j}^{\gamma_j})}{d_P},$
with the notations of \eqref{numeratParryPol}.

The sharpness of the splitting 
\eqref{decompodis} is not too bad 
in these examples: the ratio between 
the sum of Mignotte's discrepancies of the irreducible factors
of the Parry polynomial 
and Mignotte's discrepancy applied to the Parry polynomial itself is
always less than 4, and the sum of Mignotte's 
discrepancies of the factors is always much lower than the value ET.

\vspace{0.4cm}

\noindent
\begin{tabular}{c||c|c|c|c|c|}
$\beta$ & ``Parry" & ``Mini." & ``cycl." & ``rec.  & ``non-rec."
\\
 & & & & non-cycl." & \\
\hline
Confluent & $0.0695\ldots$ & $0.0695\ldots$ & / & / & / \\
$k_1 = 9, k_2 =4$ & (ET$=33.16\ldots$)& & & & \\\hline
Bassino & $0.0631\ldots$ & $0.106\ldots$ & $0.0893\ldots$ & / & / \\
$k= 30$ & (ET$=21.20\ldots$)& & & & \\
\hline
$1.767\ldots$ smallest & $0.0979\ldots$ & $0.0927\ldots$ & $0.0946\ldots$ & $0.100\ldots$ & \\
NC Pisot & (ET$=17.77\ldots$)& & & & \\
\hline
$1.764\ldots$ second-smallest& $0.107\ldots$ & $0.124\ldots$ & $0.0761\ldots$ & / & $0.0840\ldots$ \\
NR Pisot & (ET$=22.26\ldots$)& & & & \\
\hline
\end{tabular}

\vspace{0.2cm}

\begin{center}
Table 1.
\end{center}

\vspace{0.2cm}

Ganelius, Mignotte and Amoroso \cite{amoroso1}
\cite{amoroso2} \cite{ganelius} \cite{mignotte2}
have already mentioned the (numerically) bad discrepancy function given by
Erd\H{o}s-Tur\'an and Table 1 shows it as well: there
exists a factor greater than 180 between ET and 
Mignotte's discrepancy applied to the Parry polynomial, even
much larger in some other cases.
The upper bound of the multiplicities of the beta-conjugates,
computed from Mignotte's discrepancy function, according to
\eqref{betaconjumultineq}, for the four cases of Table 1, 
is respectively: 27, 40, 31, 30. These values are 
much higher than the true one: 1 in each case.

\begin{figure}
\begin{center}
\includegraphics[width=8cm]{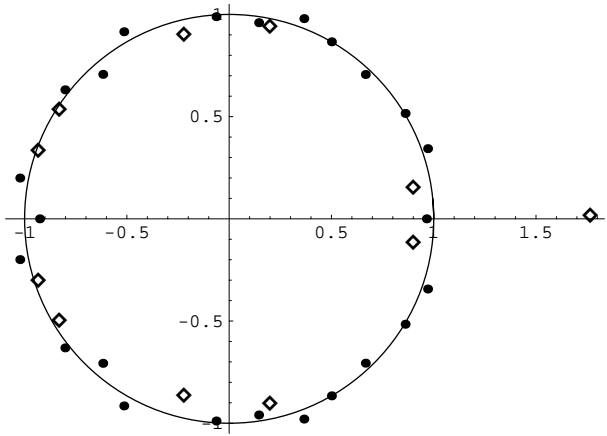}
\end{center}
\caption{Concentration and equi-distribution of the Galois
conjugates ($\diamond$) $\neq \beta$
and the beta-conjugates ($\bullet$)
of the second-smallest NR Pisot number
$\beta = 1.764\ldots$ near the unit circle.}
\label{secondsmallestNRPisot}
\end{figure}

Figure 3 and Figure 6 in Verger-Gaugry
\cite{vergergaugry2} show the equi-distribution of
the conjugates of the first two examples of Pisot numbers
near the unit circle, and are not reported here.
The last two cases are illustrated in Figure
\ref{smallestNCPisot} and Figure \ref{secondsmallestNRPisot}.

\section{Arithmetics of Perron numbers and non-Parry case}
\label{S5}

If $\beta$ is a Perron number 
which is not a Parry number the analytical function
$f_{\beta}(z)$ has the unit circle as natural boundary, 
by Szeg\H{o}'s Theorem \ref{szegothm}.
It is such that $f_{\beta}(1/\beta) = 0$ and satisfies
$$\overline{f_{\beta}(D(0,1))} ~=~ \mathbb{C}$$
by Theorem 1 in Salem (\cite{salem} p. 161).
Questions on the number and the type of the other zeros 
of this analytical function 
in the open unit disk, in particular beta-conjugates, 
can be found in \cite{vergergaugry2}.  

The set of Perron numbers
$\mathbb{P}$ admits an arithmetic structure which
does not take into account whether a 
Perron number is a Parry number or not (Lind, 
Section 5 in \cite{lind}).
First let us recall two theorems.

\begin{theorem}
\label{pisotcorpsthm}
Every real algebraic number field $\mathbb{K}$ 
is generable by 
a Pisot number. If $d=[\mathbb{K} : \mathbb{Q}]$ denotes the degree
of the field extension $\mathbb{K} / \mathbb{Q}$, the 
number field $\mathbb{K}$ contains infinitely many
Pisot numbers of degree $d$, some of which being algebraic units. 
\end{theorem}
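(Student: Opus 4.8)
The plan is to exhibit an algebraic integer $\theta\in\mathbb{K}$ having exactly one conjugate of modulus $>1$ — namely $\theta$ itself, which will be real and $>1$ — with all remaining conjugates strictly inside the unit disk, and to arrange that the $d$ conjugates be pairwise distinct so that $\deg\theta=d$, i.e. $\mathbb{K}=\mathbb{Q}(\theta)$. Fix the real embedding $\sigma_1:\mathbb{K}\hookrightarrow\mathbb{R}$ realizing $\mathbb{K}\subset\mathbb{R}$, and let $\sigma_2,\dots,\sigma_d$ be the remaining embeddings into $\mathbb{C}$ (occurring in conjugate pairs). The decisive elementary remark is that for any nonzero $\theta\in\mathcal{O}_{\mathbb{K}}$ one has $\prod_{i=1}^{d}|\sigma_i(\theta)|=|\mathrm{N}(\theta)|\ge 1$; hence as soon as $|\sigma_i(\theta)|<1$ for every $i\ge 2$, the first conjugate is automatically forced to be large, $|\sigma_1(\theta)|\ge\bigl(\max_{i\ge2}|\sigma_i(\theta)|\bigr)^{-(d-1)}>1$, and after possibly replacing $\theta$ by $-\theta$ we obtain $\theta=\sigma_1(\theta)>1$. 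Thus the whole problem reduces to controlling the $d-1$ \emph{non-dominant} conjugates.

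First I would produce such a $\theta$ and, at the same cost, secure generation. Consider the image of $\mathcal{O}_{\mathbb{K}}$ under the projection $x\mapsto(\sigma_2(x),\dots,\sigma_d(x))$ onto the non-dominant archimedean places; since the lattice $\mathcal{O}_{\mathbb{K}}$ is unbounded in the $\sigma_1$-direction, this projection has dense image in $\prod_{i\ge2}\mathbb{K}_{\sigma_i}$. Fix once and for all $d-1$ pairwise distinct targets $z_2,\dots,z_d$ lying in the open unit disk and respecting the complex-conjugation pairing of the $\sigma_i$. By density I may choose $\theta\in\mathcal{O}_{\mathbb{K}}$ with $|\sigma_i(\theta)-z_i|<\eta$ for all $i\ge2$, where $\eta$ is small enough that the $\sigma_i(\theta)$ ($i\ge2$) remain in the unit disk and stay pairwise distinct. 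By the norm remark $\sigma_1(\theta)>1$, and since $\sigma_1(\theta)>1>\max_{i\ge2}|\sigma_i(\theta)|$ it differs from all the others; thus all $d$ conjugates are distinct, $\theta$ is a Pisot number, and $\mathbb{K}=\mathbb{Q}(\theta)$. Letting $\eta\to0$ (equivalently, shrinking the permitted modulus of the non-dominant conjugates) forces $\sigma_1(\theta)\to\infty$, producing infinitely many pairwise distinct Pisot numbers of degree $d$ in $\mathbb{K}$. I expect the genuine obstacle to sit precisely here: the clean way to guarantee $\deg\theta=d$ (and not a proper subfield of $\mathbb{K}$) is this insistence on distinct non-dominant conjugates, which rests on the density of the projection together with the finiteness of the list of intermediate fields of the separable extension $\mathbb{K}/\mathbb{Q}$. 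A reader preferring geometry of numbers may replace the density input by Minkowski's convex-body theorem applied to the symmetric box $\{\,|\sigma_1|\le M,\ |\sigma_i|\le\epsilon\ (i\ge2)\,\}$ with $M=M(\epsilon)$ large, at the price of a separate avoidance argument to dodge the finitely many proper $\mathbb{Q}$-subspaces coming from subfields.

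It remains to obtain Pisot numbers that are \emph{units}, for which I would invoke Dirichlet's unit theorem. Writing $L(\varepsilon)=\bigl(\log|\sigma_1(\varepsilon)|,\dots\bigr)$ for the logarithmic embedding (with the usual local weights at complex places), the image $L(\mathcal{O}_{\mathbb{K}}^{\times})$ is a full lattice in the trace-zero hyperplane $H=\{\,x:\sum_i x_i=0\,\}$. The vectors with first coordinate positive and all others negative form a nonempty open cone inside $H$, and because $L(\mathcal{O}_{\mathbb{K}}^{\times})$ spans $H$ it meets this open cone; a unit $\varepsilon$ whose log-vector lies there satisfies $|\sigma_1(\varepsilon)|>1>|\sigma_i(\varepsilon)|$ for $i\ge2$, so after adjusting the sign it is a Pisot unit. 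Its powers $\varepsilon,\varepsilon^{2},\varepsilon^{3},\dots$ are again Pisot units, and all but finitely many of them still generate $\mathbb{K}$: the degree can drop for $\varepsilon^{k}$ only when some ratio $\sigma_i(\varepsilon)/\sigma_j(\varepsilon)$ is a root of unity, each such ratio excluding only a single arithmetic progression of exponents. This yields infinitely many Pisot units of degree $d$ inside $\mathbb{K}$ and completes the proof.
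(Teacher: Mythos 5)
Your argument is correct in substance, and it takes a genuinely different route from the paper, which offers no proof at all: it simply cites Theorem 5.2.2 of Bertin et al.\ (the classical construction going back to Pisot, carried out there essentially via your ``geometry of numbers'' variant). So your write-up is a self-contained replacement, and it is worth recording where it is fragile. First, the justification you give for the density of the projection of $\mathcal{O}_{\mathbb{K}}$ onto the non-dominant places --- ``since the lattice is unbounded in the $\sigma_1$-direction'' --- is not a proof: $\mathbb{Z}^2$ is unbounded in the $x$-direction, yet its projection onto the $y$-axis is the discrete group $\mathbb{Z}$. The density is true, but the correct reason is that the closure of the image is a closed subgroup of full linear span, so non-density would produce a nonzero vector of the dual lattice (an element $y\neq 0$ of the codifferent) orthogonal to the $\sigma_1$-axis, hence with $\sigma_1(y)=0$, which is impossible; alternatively, your Minkowski variant sidesteps the issue and is the classical route. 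Second, with the targets $z_i$ held fixed, letting $\eta\to 0$ does \emph{not} force $\sigma_1(\theta)\to\infty$: the norm inequality only gives $|\sigma_1(\theta)|\ge\bigl(\max_{i\ge 2}|z_i|+\eta\bigr)^{-(d-1)}$, which stays bounded. Your parenthetical fix --- shrinking the permitted moduli of the non-dominant conjugates themselves --- is what actually yields arbitrarily large, hence infinitely many, Pisot numbers (or simply note that density already supplies infinitely many distinct $\theta$ over one fixed target box).

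A simplification you may like: the distinctness of the non-dominant conjugates, which you single out as the genuine obstacle, is automatic and needs no choice of distinct targets. If $[\mathbb{Q}(\theta):\mathbb{Q}]=e<d$, each conjugate of $\theta$ occurs exactly $d/e\ge 2$ times in the list $\sigma_1(\theta),\dots,\sigma_d(\theta)$; in particular $\theta$ itself, of modulus $>1$, recurs at some index $i\ge 2$, contradicting $|\sigma_i(\theta)|<1$. This settles the degree question for any algebraic integer with one conjugate outside and all the others strictly inside the unit circle, and it also renders the exclusion of arithmetic progressions of exponents in your unit argument unnecessary: every power $\varepsilon^{k}$ of your Pisot unit is automatically of degree $d$.
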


\begin{proof}
It is a tradition to call Pisot numbers $S$-numbers
(\cite{bertinetal} p. 84). This is Theorem 5.2.2 in
\cite{bertinetal}. 
\end{proof}

Let $S$ be the set of Pisot numbers and 
$T$ the set of Salem numbers (\cite{bertinetal} p. 84).
We have: $S \subset \mathbb{P}_P$ \cite{bertrandmathis} \cite{schmidt}  
and
$$T \cap \mathbb{P}_P ~\neq~ \emptyset, \qquad 
T \cap \mathbb{P}_a ~\neq~ \emptyset.$$
This dichotomy of Salem numbers is still obscure.

\begin{theorem}
\label{salemcorpsthm}
Let $\beta \in T$. The algebraic 
number field $\mathbb{K} = \mathbb{Q}(\beta)$ is a real quadratic extension of a totally real field.
There exists $\tau_0 \in \mathbb{K} \cap T$ such that
\begin{itemize}
\item[(i)] $\mathbb{K} = \mathbb{Q}(\tau_0)$, 
\item[(ii)]  
$\mathbb{K} \cap T ~=~ \{\tau_{0}^{n} \mid n =1, 2, \ldots \}$.
\end{itemize}
Every number in
$\mathbb{K} \cap T$ is quotient of two numbers in $\mathbb{K} \cap S$. 
\end{theorem}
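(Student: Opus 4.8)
The plan is to reduce everything to the structure of the unit group of $\mathbb{K}$ relative to its maximal totally real subfield. First I would set $\gamma = \beta + \beta^{-1}$ and $k = \mathbb{Q}(\gamma)$. Since the conjugates of $\beta$ occur in reciprocal pairs $\beta,\beta^{-1}$ together with the circle conjugates $e^{\pm i\theta_j}$, the conjugates of $\gamma$ are the real numbers $\beta+\beta^{-1}$ and $2\cos\theta_j$, so $k$ is totally real; moreover $\beta$ is a root of $X^2 - \gamma X + 1 \in k[X]$, whence $[\mathbb{K}:k]=2$. As $\gamma^2-4 = (\beta-\beta^{-1})^2 > 0$ under the given embedding, $\mathbb{K} = k(\sqrt{\gamma^2-4})$ is a real quadratic extension of the totally real field $k$, which is the first assertion. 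I record the nontrivial automorphism $\iota$ of $\mathbb{K}/k$, determined by $\iota(\beta)=\beta^{-1}$; counting embeddings, the signature of $\mathbb{K}$ is $(2,s-1)$ with $2s=[\mathbb{K}:\mathbb{Q}]\ge 4$, the two real embeddings being $\sigma_0,\sigma_1=\sigma_0\iota$ with $\beta\mapsto\beta$ and $\beta\mapsto\beta^{-1}$, and the $s-1$ complex places arising as the conjugate pairs of extensions of the remaining real embeddings of $k$.

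Next I would pin down $\mathbb{K}\cap T$ as a cyclic semigroup. Let $\alpha\in\mathbb{K}\cap T$. Its totally real element $\alpha+\alpha^{-1}$ lies in the maximal totally real subfield of $\mathbb{K}$, which equals $k$ (any totally real subfield composed with $k$ stays totally real inside $\mathbb{K}$, hence equals $k$ by $[\mathbb{K}:k]=2$); thus $\alpha$ is a root of $X^2-(\alpha+\alpha^{-1})X+1$ over $k$, so $\iota(\alpha)\in\{\alpha,\alpha^{-1}\}$, and $\iota(\alpha)=\alpha$ is impossible since $\alpha$ is not totally real, forcing $\iota(\alpha)=\alpha^{-1}$. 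Consequently $N_{\mathbb{K}/k}(\alpha)=1$, so $\alpha$ is a unit, and for every complex place $\sigma$ one has $\overline{\sigma(\alpha)}=\sigma\iota(\alpha)=\sigma(\alpha^{-1})$, i.e.\ $|\sigma(\alpha)|=1$; moreover $\sigma_1(\alpha)=\sigma_0(\iota\alpha)=\alpha^{-1}>0$. Hence every Salem number of $\mathbb{K}$ lies in the group $W$ of units that are positive at $\sigma_0,\sigma_1$ and of modulus $1$ at every complex place. Under the logarithmic embedding these units fill a single line (the complex coordinates vanish and the two real coordinates sum to zero), so $W$ is discrete in a one-dimensional space; since $\beta\in W$ it is infinite cyclic, $W=\{\tau_0^n : n\in\mathbb{Z}\}$ with $\sigma_0(\tau_0)>1$ minimal. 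The elements of $W$ with $\sigma_0>1$ are exactly the $\tau_0^n$, $n\ge 1$, and each is genuinely a Salem number (an algebraic integer $>1$, with reciprocal conjugate $\tau_0^{-n}$ and all remaining conjugates on the circle, $s\ge2$ guaranteeing one such). This gives (ii); and since $\beta\in W$ yields $\beta=\tau_0^m$, we get $\mathbb{K}=\mathbb{Q}(\beta)\subseteq\mathbb{Q}(\tau_0)\subseteq\mathbb{K}$, which is (i).

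For the last assertion I would produce the two Pisot numbers explicitly. By Theorem~\ref{pisotcorpsthm} the real field $\mathbb{K}$ contains a Pisot number $\theta$ generating $\mathbb{K}$; applying $\iota$ if necessary (it swaps $\sigma_0,\sigma_1$ and preserves the Pisot property, as it permutes the conjugates) I may assume $\sigma_0(\theta)>1$, so $\theta$ has modulus $<1$ at every place other than $\sigma_0$. Given $\tau\in\mathbb{K}\cap T$, set $\theta_1 := \tau\theta$. It is an algebraic integer (product of the unit $\tau$ and the integer $\theta$); at $\sigma_0$ it equals $\tau\theta>1$; at $\sigma_1$ its modulus is $\tau^{-1}|\sigma_1(\theta)|<1$; and at each complex place its modulus is $1\cdot|\sigma(\theta)|<1$. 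Hence $\theta_1$ is a Pisot number of $\mathbb{K}$, and $\tau=\theta_1/\theta$ exhibits $\tau$ as a quotient of two elements of $\mathbb{K}\cap S$.

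The main obstacle is the middle step: showing that \emph{every} Salem number of $\mathbb{K}$ lies in the rank-one group $W$, including those that might a priori generate a proper subfield. The delicate point is that $\alpha+\alpha^{-1}$ must land in the fixed field $k$ of $\iota$, and not merely in some other totally real subfield; this is what makes $\iota(\alpha)=\alpha^{-1}$, and hence modulus $1$ at all complex places, hold uniformly, and it rests on $k$ being the unique maximal totally real subfield of $\mathbb{K}$. Once that is secured, the Dirichlet-unit computation placing $W$ on a line, and thereby its cyclicity, is routine.
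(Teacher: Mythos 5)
Your argument is correct. Note, however, that the paper does not actually prove this statement: its ``proof'' consists of the citation ``Theorem 5.2.3 in \cite{bertinetal}, or \cite{lalande}.'' What you have written is, in substance, a faithful reconstruction of the classical argument from exactly those sources (going back to Salem): pass to the totally real subfield $k=\mathbb{Q}(\beta+\beta^{-1})$ of index $2$, use the involution $\iota$ and the uniqueness of the maximal totally real subfield to force $\iota(\alpha)=\alpha^{-1}$ for every $\alpha\in\mathbb{K}\cap T$, locate all such $\alpha$ in the rank-one group $W$ of units that are positive at the two real places and of modulus $1$ at the complex places, and then twist a Pisot generator of $\mathbb{K}$ by $\tau$ to get the quotient representation. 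Two small points you could tighten: when you assert that each $\tau_0^{n}$ ($n\ge 1$) is genuinely Salem, the only degenerate alternative is that $\tau_0^{n}$ be a quadratic Pisot unit, and this is excluded precisely because a complex embedding of $\mathbb{K}$ sends it to a non-real value of modulus $1$ (your signature count $(2,s-1)$ with $s\ge 2$ supplies the complex place); and the ``apply $\iota$ if necessary'' step is superfluous, since a Pisot number of the real field $\mathbb{K}\subset\mathbb{R}$ is by convention already $>1$ under the distinguished embedding $\sigma_0$. Neither affects correctness.
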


\begin{proof}
Theorem 5.2.3 in
\cite{bertinetal}, or \cite{lalande}.
\end{proof}

Lind has introduced the notion of irreducible Perron number in
\cite{lind} Section 5, based on the fact that 
$\mathbb{P}$ is closed under multiplication and addition
(\cite{lind} Proposition 1) 
and on the following (\cite{lind} Proposition 5)  

\begin{proposition}
\label{lindpropo}
If $\lambda = \alpha \beta$ with $\lambda, \alpha, \beta \in \mathbb{P}$
then 
$\alpha, \beta \in \mathbb{Q}(\lambda)$.  
\end{proposition}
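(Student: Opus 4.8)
The plan is to exploit the defining dominance property of Perron numbers together with the multiplicativity of the relation $\lambda = \alpha\beta$ under Galois conjugation. First I would pass to a number field $L$ containing $\alpha$, $\beta$ and $\lambda$ that is Galois over $\mathbb{Q}$ --- for instance the Galois closure of $\mathbb{Q}(\alpha,\beta)$ --- and fix one embedding $L \hookrightarrow \mathbb{C}$ under which $\alpha$, $\beta$, $\lambda$ are the real Perron numbers themselves. The target, by the fundamental theorem of Galois theory, is then to show that every $\sigma \in \mathrm{Gal}(L/\mathbb{Q}(\lambda))$ fixes $\alpha$ and $\beta$; this places $\alpha,\beta$ in the fixed field $\mathbb{Q}(\lambda)$, which is exactly the assertion. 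The cases where one factor equals $1$ are trivial (the other then equals $\lambda$), so one may assume $\alpha, \beta > 1$.

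The heart of the argument is a rigidity coming from the strict dominance of a Perron number among its conjugates. Fix $\sigma$ with $\sigma(\lambda) = \lambda$. Applying $\sigma$ to $\lambda = \alpha\beta$ gives $\sigma(\alpha)\,\sigma(\beta) = \lambda = \alpha\beta$, and taking complex absolute values yields $|\sigma(\alpha)|\,|\sigma(\beta)| = \alpha\beta$. Now $\sigma(\alpha)$ is a Galois conjugate of $\alpha$ and $\sigma(\beta)$ a Galois conjugate of $\beta$, so the Perron property forces $|\sigma(\alpha)| \le \alpha$ and $|\sigma(\beta)| \le \beta$, with strict inequality unless $\sigma$ fixes the respective number. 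Since the product of the two upper bounds already equals $\alpha\beta$, both inequalities must be equalities; and because every conjugate of a Perron number other than itself is \emph{strictly} dominated in modulus, equality forces $\sigma(\alpha) = \alpha$ and $\sigma(\beta) = \beta$.

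Running this over all $\sigma \in \mathrm{Gal}(L/\mathbb{Q}(\lambda))$ shows that $\alpha$ and $\beta$ are fixed by this subgroup, hence lie in its fixed field $\mathbb{Q}(\lambda)$, establishing the proposition. No estimate beyond the single identity $|\sigma(\alpha)|\,|\sigma(\beta)| = \alpha\beta$ and the two one-sided bounds is needed.

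I expect the only delicate point to be the bookkeeping in the Galois setup: one must make sure that as $\sigma$ ranges over $\mathrm{Gal}(L/\mathbb{Q})$ the elements $\sigma(\alpha)$ genuinely realize the Galois conjugates of $\alpha$ (so that the strict dominance of $\alpha$ among its conjugates applies to $\sigma(\alpha)$), and that the modulus $|\cdot|$ is computed with respect to a single fixed complex embedding of $L$ throughout. Once this is pinned down, the combination of $|\sigma(\alpha)|\,|\sigma(\beta)| = \alpha\beta$ with $|\sigma(\alpha)| \le \alpha$ and $|\sigma(\beta)| \le \beta$ does all the work, and there is no further computation to grind through.
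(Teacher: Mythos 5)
Your proof is correct: the identity $|\sigma(\alpha)|\,|\sigma(\beta)|=\alpha\beta$ combined with the strict dominance $|\sigma(\alpha)|\leq\alpha$, $|\sigma(\beta)|\leq\beta$ (with equality only when $\sigma$ fixes the element) does exactly the work you claim, and the reduction to the case $\alpha,\beta>1$ disposes of the convention $1\in\mathbb{P}$. The paper itself gives no proof, citing Lind's Proposition 5 instead, and your argument is precisely Lind's original one, so there is nothing to add.
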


Let us observe that $1 \in \mathbb{P}$ (by convention)
and that $1 \not\in S, 1 \not\in T$.

\begin{definition}
A Perron number $\lambda \in \mathbb{P}$ is said irreducible if $\lambda > 1$
and if it cannot be written as $\alpha \beta$ with
$\alpha, \beta \in \mathbb{P}$ and $\alpha, \beta > 1$. 
\end{definition}

\begin{theorem}
\label{lindcorpsthm}
Every Perron number $\beta > 1$ 
can be factored into a finite number of
irreducible Perron numbers $\lambda_i$ :
$$\beta ~=~ \lambda_1 \lambda_2 \ldots \lambda_s.$$  
There is only a finite number of such factorizations of $\beta$,
and unique factorization of $\beta$ may occur
(two factorizations of $\beta$ are the same when they differ
only by the order of the terms).
\end{theorem}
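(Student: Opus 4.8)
The plan is to reduce everything to one finiteness lemma: \emph{for any number field $\mathbb{K}$ and any real bound $B>0$, the set of Perron numbers lying in $\mathbb{K}$ and not exceeding $B$ is finite.} To prove it I would use that such a $\lambda$ is an algebraic integer whose largest conjugate in modulus is $\lambda$ itself, so \emph{all} conjugates of $\lambda$ have modulus $\leq B$; hence the coefficients of the minimal polynomial of $\lambda$, being elementary symmetric functions of its conjugates, are bounded in terms of $B$ and $d=[\mathbb{K}:\mathbb{Q}]$, and there are only finitely many integer polynomials of degree $\leq d$ with coefficients so bounded. The second ingredient is that every Perron factor of $\beta$ actually lies in $\mathbb{Q}(\beta)$: applying Proposition \ref{lindpropo} repeatedly to a product $\beta=\lambda_1\cdots\lambda_s$ (each partial product being again a Perron number, since $\mathbb{P}$ is closed under multiplication) shows $\lambda_i\in\mathbb{Q}(\beta)$ for every $i$, so each $\lambda_i$ has degree $\leq d$ and is governed by the lemma.

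For existence I would argue by infinite descent. If $\beta$ is irreducible we are done with $s=1$. Otherwise write $\beta=\alpha_1\gamma_1$ with $\alpha_1,\gamma_1\in\mathbb{P}$ and $\alpha_1,\gamma_1>1$; were $\beta$ \emph{not} a finite product of irreducibles, then at least one factor, say $\gamma_1$, would also fail to be one, and iterating gives $\gamma_1=\alpha_2\gamma_2$, and so on. Since each $\alpha_i>1$ this produces a strictly decreasing chain $\beta>\gamma_1>\gamma_2>\cdots$ of Perron numbers, all in $(1,\beta)$, and all in $\mathbb{Q}(\beta)$ by repeated use of Proposition \ref{lindpropo}. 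These are infinitely many distinct Perron numbers of $\mathbb{Q}(\beta)$ bounded by $\beta$, contradicting the finiteness lemma; hence the descent terminates and $\beta$ is a finite product of irreducible Perron numbers.

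For the finiteness of the number of factorizations, let $\mathcal{S}$ be the set of irreducible Perron numbers of $\mathbb{Q}(\beta)$ that are $\leq\beta$, which is finite by the lemma; every factor occurring in any factorization of $\beta$ belongs to $\mathcal{S}$. Setting $\lambda_{\min}=\min\mathcal{S}>1$, any factorization $\beta=\lambda_1\cdots\lambda_s$ satisfies $\beta\geq\lambda_{\min}^{\,s}$, so $s\leq\log\beta/\log\lambda_{\min}$ is bounded. A factorization is therefore a multiset of bounded cardinality drawn from the finite set $\mathcal{S}$, and only finitely many such multisets exist. Finally, that \emph{unique factorization may occur} is witnessed already by any irreducible Perron number, whose sole factorization is the trivial one $\beta=\beta$.

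The main obstacle is the finiteness lemma together with the bookkeeping that keeps every intermediate factor simultaneously a Perron number and an element of $\mathbb{Q}(\beta)$: the former rests on all conjugates being bounded, which forces bounded minimal polynomials, while the latter is exactly where Proposition \ref{lindpropo} and the closure of $\mathbb{P}$ under multiplication are indispensable. Once these are in place, the descent and the multiset count are routine.
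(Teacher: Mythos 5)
Your proposal is correct. Note that the paper does not prove this statement itself: its ``proof'' is a citation to Theorem 4 of Lind's paper, and your argument is in substance a faithful, self-contained reconstruction of Lind's original one. The two key points you isolate are exactly the right ones: the finiteness of $\{\lambda\in\mathbb{P}\cap\mathbb{K} : \lambda\leq B\}$ for a fixed number field $\mathbb{K}$ (which follows, as you say, from the fact that for a Perron number \emph{all} conjugates are bounded by $\lambda$ itself, so the minimal polynomial has bounded degree and bounded integer coefficients), and the repeated use of Proposition \ref{lindpropo} together with the multiplicative closure of $\mathbb{P}$ to confine every intermediate factor to $\mathbb{Q}(\beta)$. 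The descent for existence, the bound $s\leq\log\beta/\log\lambda_{\min}$ for the number of factors, and the multiset count for the finiteness of factorizations are all sound, and the observation that any irreducible Perron number (e.g.\ a rational prime) witnesses unique factorization correctly disposes of the last clause.
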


\begin{proof}
Theorem 4 in \cite{lind}.
\end{proof}

In Theorem \ref{lindcorpsthm} the irreducible
Perron numbers $\lambda_i$
all belong to the number field 
$\mathbb{Q}(\beta) \subset \mathbb{R}$ by Proposition
\ref{lindpropo}.
The existence of non-unique factorizations in a number field
is implied by the following remarkable 
theorem of Lind (\cite{lind} Theorem 5).  

\begin{theorem}
\label{lindNONUNIthm}
Let $\mathbb{K}$ be an algebraic number field. 
The following are equivalent:
\begin{itemize}
\item[(i)] $\mathbb{K} \cap \mathbb{P}$ contains elements which 
have non-unique factorizations,
\item[(ii)] $\mathbb{K} \cap \mathbb{P}$ contains non-rational Perron numbers,
\item[(iii)] $\mathbb{K} \cap \mathbb{R} ~\neq~ \mathbb{Q}$.
\end{itemize} 
\end{theorem}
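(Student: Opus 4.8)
The plan is to close the three equivalences by treating the two easy arrows first and reserving the bulk of the work for $(\mathrm{iii})\Rightarrow(\mathrm{i})$. Note at the outset that Theorem~\ref{lindcorpsthm} guarantees that each $\beta\in\mathbb{K}\cap\mathbb{P}$ has at least one, and only finitely many, factorizations into irreducible Perron numbers, so the phrase \emph{non-unique factorization} in (i) is meaningful. I would then record $(\mathrm{ii})\Leftrightarrow(\mathrm{iii})$. Since every Perron number is real, $(\mathrm{ii})\Rightarrow(\mathrm{iii})$ is immediate: a non-rational Perron number in $\mathbb{K}$ lies in $\mathbb{K}\cap\mathbb{R}$ and is irrational. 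Conversely, if $\mathbb{K}\cap\mathbb{R}\neq\mathbb{Q}$, then the maximal real subfield $L:=\mathbb{K}\cap\mathbb{R}$ is a real algebraic number field with $[L:\mathbb{Q}]\geq 2$; by Theorem~\ref{pisotcorpsthm} it contains a Pisot number $\theta$ of degree $[L:\mathbb{Q}]\geq 2$. A Pisot number is a Perron number and $\theta$ is irrational, so $\theta\in(\mathbb{K}\cap\mathbb{P})\setminus\mathbb{Q}$, giving $(\mathrm{ii})$.

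Next I would prove $(\mathrm{i})\Rightarrow(\mathrm{ii})$ by contraposition. Suppose $(\mathrm{ii})$ fails, i.e. $\mathbb{K}\cap\mathbb{P}\subseteq\mathbb{Q}$; since the only rational Perron numbers are the positive integers, $\mathbb{K}\cap\mathbb{P}$ is exactly $\{1,2,3,\dots\}$. By Proposition~\ref{lindpropo}, every factorization $\beta=\alpha\gamma$ of such a $\beta$ inside $\mathbb{P}$ has $\alpha,\gamma\in\mathbb{Q}(\beta)=\mathbb{Q}$, hence $\alpha,\gamma$ are again positive integers. Thus the irreducible Perron numbers available here are precisely the rational primes, and every positive integer factors uniquely by the fundamental theorem of arithmetic. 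Hence $(\mathrm{i})$ fails, which is the contrapositive.

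The crux is $(\mathrm{iii})\Rightarrow(\mathrm{i})$. Working in $L=\mathbb{K}\cap\mathbb{R}$, I would first produce an irrational \emph{irreducible} Perron number: factoring the Pisot generator $\theta$ of $L$ via Theorem~\ref{lindcorpsthm}, at least one irreducible factor $\pi$ must be irrational, for otherwise $\theta$ would be a product of rational integers and hence rational. The decisive remaining step is to exhibit a single Perron number carrying two distinct irreducible factorizations. The mechanism I would use is to convert a relation that holds in the ring of integers but is not itself a Perron factorization (because one Galois conjugate is too large) into a genuine Perron identity by multiplying through by a Perron unit: Theorem~\ref{pisotcorpsthm} supplies Pisot units $u>1$ in $L$, and for a suitable rational prime $p$ one seeks an irrational irreducible Perron number $\pi$ with, e.g., $\pi^{2}=p\,u$, or more generally $\pi_1\pi_2=m\,u$ with $m>1$ a rational integer. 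The left-hand side then factors through the irrational irreducible $\pi$, while the right-hand side factors through rational primes and the irreducible constituents of $u$; since $\pi$ is irrational these two multisets differ. The model case is $L=\mathbb{Q}(\sqrt5)$, where the ramified prime $5$ yields
\[
5\,\tau^{2} \;=\; \Bigl(\tfrac{5+\sqrt5}{2}\Bigr)^{2},
\]
with $5$, $\tau=\tfrac{1+\sqrt5}{2}$, and $\tfrac{5+\sqrt5}{2}$ all irreducible Perron numbers, so the two sides are distinct factorizations of the same Perron number.

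The main obstacle is making this construction work in an arbitrary real field $L$, which need not contain any quadratic subfield, so the square-root shortcut of the model case is unavailable; moreover one must keep \emph{every} factor a Perron number, and the persistent difficulty is that a Galois conjugate of a Perron number is generically not Perron, its dominant conjugate being the wrong one. I expect to resolve both points by taking the auxiliary elements in the form (algebraic integer)$\,\times\,$(Pisot unit), exploiting the infinitude of Pisot numbers and Pisot units in $L$ from Theorem~\ref{pisotcorpsthm} to push all non-dominant conjugates below the dominant one, and by selecting $p$ among the rational primes that ramify or split in $L$ so that $m$ genuinely admits two incompatible routes to irreducibles. Verifying the irreducibility of the newly built irrational factor, that is, ruling out a hidden Perron-unit or rational-prime divisor, will be the last technical check.
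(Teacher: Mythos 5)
The paper itself offers no argument for this statement: it is quoted verbatim as Theorem 5 of Lind \cite{lind}, so there is nothing internal to compare your proof against, and it must be judged on its own terms. On those terms, two thirds of it are fine. Your (ii)$\Leftrightarrow$(iii) is correct: Perron numbers are real, and if $\mathbb{K}\cap\mathbb{R}\neq\mathbb{Q}$ then Theorem~\ref{pisotcorpsthm} applied to the real subfield $L=\mathbb{K}\cap\mathbb{R}$ yields an irrational Pisot, hence Perron, number in $\mathbb{K}$. Your (i)$\Rightarrow$(ii) by contraposition is also correct: if $\mathbb{K}\cap\mathbb{P}\subseteq\mathbb{Q}$ then $\mathbb{K}\cap\mathbb{P}=\{1,2,3,\dots\}$, Proposition~\ref{lindpropo} confines every Perron factorization of such an integer to $\mathbb{Q}$, and uniqueness is the fundamental theorem of arithmetic.

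The genuine gap is (iii)$\Rightarrow$(i), which is the entire content of Lind's theorem, and which you do not prove. What you supply is one model identity in $\mathbb{Q}(\sqrt5)$, namely $5\tau^2=\bigl(\tfrac{5+\sqrt5}{2}\bigr)^2$ (the identity itself is correct), together with a programme for the general case. Even in the model case the irreducibility of $5$, $\tau$ and $\tfrac{5+\sqrt5}{2}$ as Perron numbers is asserted, not verified; it requires checking that no Perron unit $\tau^n$ ($n\geq 1$) can be split off, i.e.\ that $\sqrt5\,\tau^{1-n}$ is never Perron, which is true but needs the unit-group computation. More seriously, for an arbitrary real field $L\neq\mathbb{Q}$ --- which need not contain $\mathbb{Q}(\sqrt5)$ or any quadratic subfield --- your construction of a relation $\pi_1\pi_2=m\,u$ with every factor Perron and irreducible is explicitly deferred (``I expect to resolve\dots'', ``will be the last technical check''). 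The two obstacles you name, that Galois conjugates of Perron numbers are generically not Perron and that irreducibility of the manufactured factors must be established, are exactly where the difficulty of the theorem sits, and they remain open in your write-up. (One helpful observation you could add: by Proposition~\ref{lindpropo} a pair of distinct factorizations found in any subfield of $\mathbb{K}$ survives as a pair of distinct factorizations in $\mathbb{K}\cap\mathbb{P}$, so it would suffice to treat $\mathbb{Q}(\theta)$ for a single irrational Perron $\theta\in\mathbb{K}$; but that still leaves every real field of degree $\geq 2$ to handle, not just $\mathbb{Q}(\sqrt5)$.) As written, the three-way equivalence is broken at this implication.
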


A basic question is
about the nature and the 
dispatching of the irreducible Perron numbers
in the two classes $\mathbb{P}_P$ and
$\mathbb{P}_a$.
The characterization of the family of
irreducible Perron numbers in 
a given number field is 
obscure, in particular when the number field is generated
by a non-Parry Perron number. 
By Theorem \ref{pisotcorpsthm} non-unique factorizations occur
in every real number field $\mathbb{K}$ since such number 
fields are generated by
Pisot numbers, but though Pisot numbers are always
Parry numbers, this does mean that the irreducible 
Perron numbers in $\mathbb{K}$ are necessarily Parry numbers.

\begin{corollary}
\label{salemNONUNIcoro}
For every Salem number $\beta$ the number field
$\mathbb{Q}(\beta)$ contains elements which
have non-unique factorizations into irreducible Perron numbers of
$\mathbb{Q}(\beta)$.
\end{corollary}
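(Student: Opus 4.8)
The plan is to reduce the corollary directly to Theorem \ref{lindNONUNIthm}, whose equivalence of items (i) and (iii) is designed for exactly this kind of statement. All that must be verified is that the field $\mathbb{K} := \mathbb{Q}(\beta)$ satisfies condition (iii), namely $\mathbb{K} \cap \mathbb{R} \neq \mathbb{Q}$.

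First I would recall that a Salem number $\beta$ is by definition a real algebraic integer greater than $1$ whose remaining conjugates lie in the closed unit disk with at least one on the unit circle; its degree is in particular at least $4$. Hence $\beta$ is real and irrational, so $\beta \in \mathbb{K} \cap \mathbb{R}$ while $\beta \notin \mathbb{Q}$. Equivalently, Theorem \ref{salemcorpsthm} already tells us that $\mathbb{K}$ is a real quadratic extension of a totally real field, which forces $\mathbb{K} \subset \mathbb{R}$ and $\mathbb{K} \neq \mathbb{Q}$.

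Consequently $\mathbb{K} \cap \mathbb{R} = \mathbb{K} \supsetneq \mathbb{Q}$, so hypothesis (iii) of Theorem \ref{lindNONUNIthm} holds, and the implication (iii) $\Rightarrow$ (i) yields that $\mathbb{K} \cap \mathbb{P}$ contains elements admitting non-unique factorizations. By Proposition \ref{lindpropo} and the remark following Theorem \ref{lindcorpsthm}, the irreducible Perron factors of any element of $\mathbb{K} \cap \mathbb{P}$ again lie in $\mathbb{Q}(\beta)$, so these are factorizations into irreducible Perron numbers of $\mathbb{Q}(\beta)$, exactly as asserted.

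There is no real obstacle here: all the substance is already packaged into Theorem \ref{lindNONUNIthm}, and the corollary reduces to the elementary observation that a Salem number generates a nontrivial real extension of $\mathbb{Q}$. The one point that merits a line of care is confirming that the factorizations supplied by Lind's theorem take place within $\mathbb{Q}(\beta)$, which is exactly the content of Proposition \ref{lindpropo}.
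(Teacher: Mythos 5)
Your proposal is correct and follows essentially the same route as the paper: both verify condition (iii) of Theorem \ref{lindNONUNIthm} for $\mathbb{K}=\mathbb{Q}(\beta)$ (the paper invoking Theorem \ref{salemcorpsthm} to see that $\mathbb{K}$ is a real field strictly containing $\mathbb{Q}$) and then apply the implication (iii) $\Rightarrow$ (i). Your added remark that the irreducible factors stay in $\mathbb{Q}(\beta)$ via Proposition \ref{lindpropo} is a sensible, if implicit in the paper, point of care.
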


\begin{proof}
It is a consequence of Theorem
\ref{salemcorpsthm} (ii) and of
Theorem \ref{lindNONUNIthm} (iii).  
\end{proof}

\frenchspacing

\section*{Acknowledgements}

The author is indebted to Y. Bilu, L. Habsieger, P. Liardet  and G. Rhin  
for  valuable comments and discussions.

\frenchspacing

\end{document}